      \theoremstyle{plain}
      \newtheorem{theorem}{Theorem}[section]
      \newtheorem{lemma}[theorem]{Lemma}
      \newtheorem{proposition}[theorem]{Proposition}
      \theoremstyle{definition}
      \theoremstyle{remark}
\begin{document}

%


\title[Asymptotic direction for RWRE.]
{Asymptotic direction for  random walks in  mixing random environments}


   \author{Enrique Guerra$^{1}$ and Alejandro F. Ram\'\i rez$^{1,2}$
  }

\thanks{
$^1$ Partially supported by Iniciativa Cient\'\i fica Milenio NC120062}

\thanks{
$^2$ Partially supported  by Fondo Nacional de Desarrollo Cient\'\i fico
y Tecnol\'ogico grant 1141094.}

\email{eaguerra@mat.uc.cl, aramirez@mat.puc.cl}
\address{ Facultad de Matem\'aticas\\
Pontificia Universidad Cat\'olica de Chile\\
Vicu\~na Mackenna 4860, Santiago 7820436, Chile\\
Telephone: [56](2)2354-5466\\
Telefax: [56](2)2552-5916}




   \begin{abstract}
We prove that every random walk in a uniformly elliptic random environment satisfying
the cone mixing condition and a non-effective polynomial
ballisticity condition with high enough degree has an asymptotic direction.
    \end{abstract}

   \date{\today}


   \maketitle

\noindent {\footnotesize
{\it 2000 Mathematics Subject Classification.} 60K37, 82C41, 82D30.

\noindent
{\it Keywords.} Random walk in random environment,
ballisticity conditions, cone mixing.}


   \section{Introduction}
Random walk in random environment is a
 simple but powerful model for a variety of
phenomena including
homogenization in disordered materials \cite{M94}, DNA chain replication \cite{Ch62},
crystal growth \cite{T69} and turbulent behavior in fluids
\cite{Si82}. 
 Nevertheless, challenging and fundamental questions about
it remain open (see \cite{Z04} for a
general over\-view). 
In the multidimensional setting 
 a widely open question is to  
establish relations between the environment at
a local level and the long time behavior of the random walk.
During
last ten years, interesting progress has been achieved
specially in the case in which the movement takes
place on the hypercubic lattice $\mathbb Z^d$ and the environment is  i.i.d.,
establishing relations between directional transience, ballisticity
and the existence of an asymptotic direction and the law
of the environment in finite regions.
To a great extent, these arguments are no 
longer valid when the i.i.d. assumption is dropped.

In this article we focus on the problem of finding local
conditions on the environment which ensure the
existence of a deterministic asymptotic direction for the random walk model
in contexts where the environment  
 is not necessarily i.i.d. As it will be shown in Section \ref{sex},
there exist environments which are ergodic and for which there
does not exist a deterministic asymptotic direction. Therefore,
some kind of mixing or ballisticity condition should be imposed on
the environment.
Here we establish the existence of an asymptotic
direction for random walks in random environments
which are uniformly elliptic, are  cone mixing \cite{CZ01}, and satisfy
a non-effective version of the polynomial ballisticity condition
introduced in \cite{BDR14} with high enough degree of the decay.
It will be also shown (see Section \ref{sex}), that there
exist environments {\it almost} satisfying the   above assumptions which
are directionally transient and for which there exists
at least in a weak sense an asymptotic direction, but have a vanishing velocity. 
Here the term {\it almost} is used
because in these examples the non-effective polynomial ballisticity condition is satisfied
 with a low degree. This
shows that somehow, while   
the  mixing and non-effective polynomial conditions we
will impose do imply the existence of an asymptotic direction,
they might not necessarily imply the existence of a non-vanishing
velocity.

For $x\in\mathbb R^d$, we denote by $|x|_1$, $|x|_2$ and $|x|_\infty$ its $l_1$,  $l_2$ and $l_\infty$ norms respectively.
For each integer $d\ge 1$, we consider the $2d-$dimensional simplex
$\mathcal{P}_{d}:=\{z\in (\mathbb{R}^+)^{2d}: \sum_{i=1}^{2d} \  z_i=1 \}$
and $E:=\{e\in\mathbb Z^d: |e|_1=1\}$.
We define the {\it environmental space}
$\Omega:= \mathcal{P}_{d}^{\mathbb{Z}^d}$ and endow it with
its product $\sigma$-algebra. Now, for a fixed
 $\omega=\{\omega(y):y\in\mathbb Z^d\}\in \Omega$, with $\omega(y)=\{\omega(y,e):e\in U\}\in\mathcal P_d$, and a fixed $x\in\mathbb Z^d$, we consider the Markov
chain $\{X_n:n\ge 0\}$ with state space $\mathbb{Z}^d$
starting from $x$ defined by the transition probabilities

\begin{equation}
\label{qlaw}
P_{x,\omega}[X_{n+1}=X_n + e \mid X_n]=\omega(X_n,e)\qquad {\rm for}\quad   e\in U.
\end{equation}
We denote by $P_{x,\omega}$ the law of this Markov chain and call it
a random walk in the environment $\omega$.
 Consider a law $\mathbb P$
defined on $\Omega$. We call $P_{x,\omega}$ the {\it quenched law}
of the random walk starting from $x$. Furthermore, we define
the semi-direct product probability measure on $\Omega\times(\mathbb Z^d)^{\mathbb N}$ by

$$
P_x(A\times B):=\int_A P_{x,\omega}(B)d\mathbb P
$$
for each Borel-measurable set $A$ in $\Omega$ and $B$ in $(\mathbb Z^d)^{\mathbb N}$,
 and call it the {\it annealed or averaged law}
of the random walk in random environment. The law $\mathbb P$
of the environment is said to be i.i.d. if the random variables
$\{\omega(X):x\in\mathbb Z^d\}$ are i.i.d. under $\mathbb P$,
 {\it elliptic} if for every $x\in\mathbb Z^d$ and $e\in U$
one has that $\mathbb P[\omega(x,e)>0]=1$ while uniformly
elliptic if there exists a $\kappa>0$
such that
 $\mathbb P[\omega(x,e)\ge\kappa]=1$ for every $x\in\mathbb Z^d$ and $e\in U$.

Let $l\in\mathbb S^{d-1}$.
We say that a random walk is {\it transient in direction $l$}
or just {\it directionally transient} if $P_0$-a.s. one has that

$$
\lim_{n\to\infty}X_n\cdot l=\infty.
$$
Furthermore, we say that it is {\it ballistic in direction $l$} if

$$
\liminf_{n\to\infty}\frac{X_n\cdot l}{n}>0.
$$
In the case in which the environment is elliptic and i.i.d., it is known
that whenever a random walk is ballistic necessarily a law of large
numbers is satisfied and in fact $\lim_{n\to\infty}\frac{X_n}{n}=v\ne 0$
is deterministic \cite{DR14}. Furthermore, in the uniformly
elliptic i.i.d. case, it is still an open question to establish
whether or not in dimensions $d\ge 2$, every directionally transient
random walk is ballistic (see \cite{BDR14}).

On the other hand, we say that $\hat v\in\mathbb S^{d-1}$ is
an {\it asymptotic direction} if $P_0$-a.s. one has that

$$
\lim_{n\to\infty}\frac{X_n}{|X_n|_2}=\hat v.
$$
For elliptic i.i.d. environments,
 Simenhaus  established \cite{Si07} the
existence of an asymptotic direction
whenever the random walk is directionally transient in an open set
of ${\mathbb S^{d-1}}$. As it will be shown in Section \ref{sex}, this
statement is not true anymore when the environment is assumed to
be ergodic instead of i.i.d., even if it is uniformly elliptic.

 Let us now define the three main assumptions throughout this
article: uniform ellipticity, cone mixing, and non-effective
polynomial ballisticity condition.
 Let $\kappa>0$. We say that
$\mathbb{P}$  is {\it uniformly elliptic with respect to $l$}, denoted
by $(UE)|l$, if
 the jump
probabilities of the random walk are positive and larger  than $2\kappa$
 in those directions which for which the projection of $l$ is positive.
In other words if
$\mathbb{P}[\omega(0,e)>0]=1$ for $e\in E$
and if

$$
\mathbb{P}\left[\min_{e\in\mathcal{E}}\ \omega(0,e)\geq 2\kappa\right]=1,
$$
where

\begin{equation}
\label{espacio-epsilon}
\mathcal E:=\cup_{i=1}^d\{sgn(l_i)e_i\}-\{0\}
\end{equation}
and by convention $sgn(0)=0$.

\medskip

We will now introduce a certain mixing assumption for the
environment $\mathbb P$.
Let  $\alpha>0$ and $R$ be a rotation such that

\begin{equation}
\label{rot}
R(e_1)=l.
\end{equation}
To define the cone, it will be useful to consider for each $i \in [2,d]$,

$$
l_{+i}=\frac{l+\alpha R(e_i)}{|l+\alpha R(e_i)|}\qquad
{\rm and}\qquad
l_{-i}=\frac{l-\alpha R(e_i)}{|l-\alpha R(e_i)|}.
$$
The cone $C(x, l,\alpha)$ centered in $x\in \mathbb R^d$ is defined as

\begin{equation}
\label{cono}
C(x, l,\alpha)):=\bigcap_{i=2}^{d}\left\{z \in \mathbb{R}^d: (z-x) \cdot l_{+i} \geq 0, \, (z-x) \cdot l_{-i} \geq 0\right\}.
\end{equation}
Let $\phi: [0,\infty)\to [0,\infty)$
be such that $\lim_{r\to\infty}\phi(r)=0$.
We say that a stationary probability measure $\mathbb{P}$  satisfies the
{\it cone mixing assumption} with
respect to $\alpha$, $l$ and $\phi$, denoted $(CM)_{\alpha,\phi}|l$, if for every pair of events $A, B$, where $\mathbb{P}(A)>0$, $A\in\sigma\{\omega(z,\cdot); z\cdot l\leq 0\}$, and $B\in \sigma\{\omega(z,\cdot); z\in C(rl,l,\alpha)\}$, it holds that
$$
\left|\frac{\mathbb{P}[A\cap B]}{\mathbb{P}[A]}-\mathbb{P}[B]\right|\leq \phi(r |l|_1).
$$
We will see that every stationary cone mixing measure $\mathbb P$ is
necessarily ergodic. On the other hand, a cone-mixing
environment can be such that the jump probabilities are highly dependent along
certain directions.

\medskip

We now introduce an assumption which is closely related
to the effective polynomial ballisticity condition introduced in
\cite{BDR14}.
For each $A\subset \mathbb{Z}^d$  we define

$$
\partial A:=\{z\in\mathbb{Z}^d: z\not\in A, \mbox{ there exists some  }y\in A \mbox{ such that } |y-z|=1 \}.
$$
Define also the stopping time

$$
T_A:=\inf\{n\geq0: X_n\not\in A\}.
$$
Given $L,L'>0$, $x\in\mathbb Z^d$ and $l\in\mathbb S^{d-1}$ we define the boxes

$$
B_{L,L',l}(x):=
x+R\left(\left(-L,L\right)\times\left(-L',L'\right)^{d-1}\right)
\cap\mathbb{Z}^d,
$$
where $R$ is defined in (\ref{rot}).
The \emph{positive boundary} of $B_{L,L',l}(x)$, denoted by $\partial^+B_{L,L',l}(0)$, is

$$
\partial^+B_{L, L',l}(0):=\partial B_{L, L',l}(0)\cap\{z:z\cdot l\geq L\},
$$
Define also the half-space

$$
 H_{x,l}:=\{y\in\mathbb Z^d: y\cdot l< x\cdot l\},
$$
and the corresponding $\sigma$-algebra of the environment on that half-space

$$
\mathcal H_{x,l}:=\sigma(\omega(y):y\in H_{x,l}).
$$
Now, for $M\ge 1$, we say that the
{\it  non-effective polynomial} condition  $(PC)_{M,c}|l$ is satisfied if there exists some $c>0$ so that for  $y \in H_{0,l}$ one has that

\begin{equation}
\label{Pl}
\overline\displaystyle{\lim_{L\to\infty}}L^M \sup P_0\left[X_{T_{B_{L, cL,l}}(0)}\not\in \partial^+B_{L,
 cL,l}(0),
T_{B_{L, cL,l}(0)}<T_{H_{y,l}}|\mathcal H_{y,l}\right]= 0,
\end{equation}
where the supremum is taken  over all the coordinates $\{\omega(x): x\cdot l\le y\cdot l\}$.
 It is possible to show that for i.i.d. environments,
this condition is implied by Sznitman's $(T')$ condition \cite{Sz03}, and
it is equivalent to the effective polynomial condition introduced
in \cite{BDR14}.

Let $\mathbb I$ be the subset of vectors in $\mathbb R^d$ different from $0$ and with integer
coordinates.
Define $\mathbb S^{d-1}_q:=\left\{\frac{l}{|l|_2}:l\in\mathbb I\right\}$. We can now state  our main result.

\medskip

\begin{theorem}
\label{mainth}
Let $l\in\mathbb S^{d-1}_q$, $M> 6d$, $c>0$ and $0<\alpha\leq\min\{\frac{1}{9},\ \frac{1}{2c+1}\}$.
Consider a random walk in a random environment with stationary law
satisfying the
 uniform ellipticity condition $(UE)|l$, the cone mixing condition  $(CM)_{\alpha,\phi}|l$
and the  non-effective polynomial condition $(PC)_{M,c}|l$.
Then, there exists a deterministic $\hat v\in\mathbb S^{d-1}$ such that $P_0$-a.s. one has that

$$
\lim_{n\to\infty}\frac{X_n}{|X_n|}=\hat v.
$$
\end{theorem}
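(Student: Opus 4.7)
The plan is to build a sequence of approximate regeneration positions $\{X_{\tau_k}\}_{k\ge 0}$ along direction $l$, show that the unit vectors $X_{\tau_k}/|X_{\tau_k}|_2$ converge almost surely to a deterministic $\hat v$, and interpolate to conclude the theorem. Using the non-effective polynomial condition $(PC)_{M,c}|l$, I would fix a large scale $L$ and let $\tau_1$ be the first exit time of $X_{\cdot}$ from the box $B_{L,cL,l}(0)$: by $(PC)_{M,c}|l$ this exit occurs through the positive face $\partial^+ B_{L,cL,l}(0)$ with probability $1 - o(L^{-M})$, and the uniform ellipticity $(UE)|l$ upgrades this to a genuine regeneration event of positive probability on which the walk never returns to $\{z\cdot l \le 0\}$. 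Iteratively, given $X_{\tau_k}$ one defines $\tau_{k+1}$ as the analogous exit time of the shifted box $B_{L,cL,l}(X_{\tau_k})$. The constraint $\alpha \le 1/(2c+1)$ is imposed precisely so that the forward cone $C(X_{\tau_k}, l, \alpha)$ contains every subsequent regeneration box, which is what makes $(CM)_{\alpha,\phi}|l$ applicable iteratively at each step.

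The second step is to use cone mixing to decouple the environment inside successive regeneration boxes from the past environment encoded by $\mathcal H_{X_{\tau_k}, l}$, producing a coupling of the quenched law after $\tau_k$ with a fresh copy of the annealed law, with total variation error at most $\phi(L)$ per regeneration. Telescoping, the increments $\Delta_k := X_{\tau_{k+1}} - X_{\tau_k}$ form, up to summable coupling errors, an i.i.d.\ sequence. The hypothesis $M > 6d$ is then used to derive strong enough polynomial moment bounds on $|\Delta_k|_2$ to run a Borel--Cantelli argument controlling the lateral deviations, keeping $X_{\tau_k}/|X_{\tau_k}|_2$ from oscillating in the limit. From the approximate i.i.d.\ structure one shows that $X_{\tau_K}/|X_{\tau_K}|_2$ is almost surely Cauchy and thus converges to some random $\hat v$; determinism of $\hat v$ follows from a $0$-$1$ law, since cone mixing together with $l \in \mathbb S^{d-1}_q$ implies that $\mathbb P$ is ergodic under the shift group generated by the integer direction of $l$.

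To finish, interpolate between regeneration times: another use of $(PC)_{M,c}|l$ provides polynomial tails for $\sup_{\tau_k \le n < \tau_{k+1}} |X_n - X_{\tau_k}|_2$, which is therefore $o(|X_{\tau_k}|_2)$ almost surely, so that $X_n/|X_n|_2 \to \hat v$. The main obstacle is the middle step: neither $(PC)_{M,c}|l$ (which only asserts a limsup) nor $(CM)_{\alpha,\phi}|l$ (which prescribes no quantitative rate beyond $\phi(r) \to 0$) carries a uniform quantitative rate, so one must choose the regeneration scale $L$ large enough that the coupling error $\phi(L)$, the exit-failure estimate from $(PC)$, and the moment bounds all close simultaneously. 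The exponent $M > 6d$ appears to reflect a budget in which three polynomial losses must be absorbed, one for the regeneration moments, one for Borel--Cantelli over $k$, and one for lateral $d$-dimensional geometry, and the flexibility of taking $L$ arbitrarily large is what makes these estimates compatible.
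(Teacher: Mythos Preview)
Your proposal has the right high-level architecture (approximate regeneration, coupling via cone mixing, interpolation), but several load-bearing steps are either missing or incorrect.

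\textbf{The regeneration time is not what you define.} Taking $\tau_1$ to be the exit time of $B_{L,cL,l}(0)$ gives no regeneration structure: nothing prevents the walk from backtracking after exiting, and $(PC)_{M,c}|l$ says nothing about that. The paper instead uses the Comets--Zeitouni construction: an auxiliary i.i.d.\ sequence $(\varepsilon_n)$ is coupled to the walk, and $\tau^{(L)}=S_K$ is the first time at which the walk has just performed $L$ forced steps along $l$ \emph{and} subsequently never leaves the forward cone $C(X_{S_K},l,\alpha)$. For this to be finite almost surely one needs $P_0[D'=\infty]>0$, where $D'$ is the first exit time from the cone. This is Proposition~\ref{D'}, and it is not a consequence of uniform ellipticity: it requires first propagating $(P)_{M,c}|l$ to the tilted directions $l_{\pm i}$ with exponent $N=M/3-1$ (Proposition~\ref{lneighbors}), and then a renormalization on dyadic scales (Lemma~\ref{lemmaD'}). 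The constraint $M>6d$ enters here, to ensure $N>2(d-1)$, not as a Borel--Cantelli budget.

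\textbf{The coupling errors are not summable.} You write that the increments are i.i.d.\ ``up to summable coupling errors''. But cone mixing gives only a fixed error $\phi'(L)$ per regeneration step, with no decay in $k$; since $\phi$ has no prescribed rate, $\sum_k \phi'(L)=\infty$ for every fixed $L$. The paper does not try to sum these errors. Instead, for each fixed $L$ it couples $(\overline X_i)$ with a genuinely i.i.d.\ sequence $(\widetilde X_i)$ plus a Bernoulli$(\phi'(L))$ contamination, and uses the second-moment bound of Proposition~\ref{prop-sec5} (namely $\bar E_0[(\kappa^L X_{\tau_1}\cdot l)^2\mid\mathfrak F_{0,L}]\le c_5$) together with the law of large numbers and a martingale argument to obtain
\[
\limsup_{n\to\infty}\Bigl|\tfrac{\kappa^L X_{\tau_n}}{n}-\lambda_L\Bigr|<\eta_L,\qquad \eta_L\to 0.
\]
Only then does one take $L\to\infty$ along a subsequence for which $\lambda_L/|\lambda_L|_2$ converges, using the lower bound $|\lambda_L|\ge c\,\kappa^{-L}$ to kill the ratio $\eta_L/|\lambda_L|$. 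Determinism of $\hat v$ is immediate because each $\lambda_L$ is a deterministic expectation; no $0$--$1$ law is invoked. Your Cauchy-sequence/Borel--Cantelli route does not close without a summability you do not have.
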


\medskip

\noindent  As it will be explained in Section \ref{sex},  Simenhaus's theorem which states that an asymptotic
direction exists whenever the random walk is directionally transient in an open set
of directions and the environment is i.i.d., is not true if the i.i.d. assumption
is dropped. Somehow, Theorem \ref{mainth} shows that if the i.i.d. assumption
is weakened to cone mixing, while directional transience is strengthened to
the non-effective polynomial condition, we still can guarantee the existence of
an asymptotic direction. 

In \cite{CZ01}, the existence of a strong law of large numbers is established for
random walks in cone-mixing environments which also satisfy a version of Kalikow's
condition, but under an additional assumption of existence of certain moments
of approximate regeneration times. This assumption is unsatisfactory in the sense
that it is in general difficult to verify if for a given random environment  it is true or not.
On the other hand, as it will be shown in Section \ref{sex}, there exist examples of random walks
in a random environment satisfying the cone-mixing assumption 
for which the law of large numbers is not satisfied, while an asymptotic direction
exists.
From this point of view, Theorem 1.1 is also a first step in the direction of
obtaining scaling limit theorems for random walks in cone-mixing environments
through ballisticity conditions weaker than Kalikow's condition, and without
any kind of assumption on the moments of approximate regeneration times or 
of the position of the random walk at these times. On the other hand,
in \cite{RA03}, a strong law of large numbers is proved for random walks
which satisfy Kalikow's condition and Dobrushin-Shlosman's strong mixing assumption.
The Dobrushin-Shlosman strong mixing assumption is stronger than cone-mixing, both because
it implies cone-mixing in every direction and because it corresponds to a decay
of correlations which is exponential.

A key step to prove Theorem 1.1 will be to establish that the
probability that the random walk never exits a cone is positive
through the use of renormalization type ideas, and only assuming
the non-effective polynomial condition and uniform ellipticity.
Using this fact, we will define approximate regeneration times
as in \cite{CZ01}, showing that they have finite moments
of order larger than one
when we also assume cone-mixing. This part of the proof will
require careful and tedious computations. Once this is done,
the law of large numbers can be deduced using for example the
coupling approach of \cite{CZ01}.

In Section \ref{sex}, we will present two examples of random walks
in random environments which exhibit a behavior which is
not observed in the i.i.d. case, giving an idea of the kind of limitations
given by the  framework
 of Theorem \ref{mainth}. In Section \ref{pddis}, the
meaning of the non-effective polynomial condition and its
relation to other ballisticity conditions will be discussed.
In Section \ref{pdd}, we will
show that the non-effective polynomial condition implies
that the probability that the random walk never exits a cone is positive.
This will be used in Section \ref{section5} to  prove that the
approximate regeneration times have finite moments of order larger than one.
Finally in Section \ref{finals}, Theorem \ref{mainth} will be proved
using coupling with i.i.d. random variables.

\medskip

\section{Examples of  directionally transient random walks
without  an asymptotic direction and vanishing velocity}
\label{sex}

We will present two examples of random walks in random environment
which exhibit the framework of the hypothesis of Theorem \ref{mainth}.
The first example  indicates that the hypothesis
of Theorem \ref{mainth} might not necessarily imply a
strong law of large numbers with a non-vanishing velocity.
The second example will show  that we cannot expect to prove the existence of an asymptotic
direction without either some kind of mixing hypothesis on the environment
or some ballisticity condition.

Throughout,
 $p$ will be a random variable taking values in $(0,1)$ such that
there exists a unique $\kappa\in (1/2,1)$ with the property that

\begin{equation}
\label{tnb}
E[\rho^\kappa]=1\quad {\rm and}\quad E[\rho^\kappa\ln^+\rho]<\infty,
\end{equation}
where $\rho:=(1-p)/p$.

\medskip
\subsection{Random walk with a vanishing velocity but with an asymptotic direction}

 Let $\{p_i:i\in\mathbb Z\}$ be i.i.d. copies
of $p$.
 Let $e_1$ and $e_2$ be the canonical vectors in $\mathbb Z^2$. Define an i.i.d. sequence of random
variables $\{\omega_i:i\in\mathbb Z\}$ with $\omega_i=\{\omega_i(e_1),
\omega_i(-e_1),\omega_i(e_2), \omega_i(-e_2)\}$,
by

$$
\omega_i(e_2)=\omega_i(-e_2)=\frac{1}{4},
$$

$$
\omega_i(e_1)=\frac{p_i}{2}\quad {\rm and}\quad
\omega_i(-e_1)=\frac{1}{2}-\frac{p_i}{2}.
$$
Now consider the  random environment
$\omega=\{\omega((i,j)):(i,j)\in\mathbb Z^2\}$ defined

$$
\omega((i,j)):=\omega_i\quad {\rm for}\ {\rm all}\quad i,j\in\mathbb Z.
$$
We will call $\mathbb P_1$ the law of the above environment and $Q_1$ the  annealed law of the corresponding random
walk starting from $0$.

\medskip

\begin{theorem}
\label{te1}
Consider a random walk in a random environment
with law $\mathbb P_1$. Then, the following are satisfied:

\begin{itemize}

\item[$(i)$] $Q_1$-a.s.

$$
\lim_{n\to\infty}X_n\cdot e_1=\infty.
$$
\item[$(ii)$] $Q_1$-a.s.

$$
\lim_{n\to\infty}\frac{X_n}{n}=0.
$$

\item[$(iii)$] In $Q_1$-probability

$$
\lim_{n\to\infty}\frac{X_n}{|X_n|_2}=e_1.
$$

\item[$(iv)$] The law  $Q_1$ satisfies the polynomial condition $(PC)_{M,c}$ with $M=\kappa-\frac{1}{2}-\varepsilon$ and $c=1$, where $\varepsilon$ is an arbitrary number in the interval $(0,\kappa-\frac{1}{2})$.

\end{itemize}
\end{theorem}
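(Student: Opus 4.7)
The plan is to exploit the layered structure of the environment: since $\omega((i,j))=\omega_i$ depends only on the first coordinate and the vertical transition rates are both equal to $1/4$, at each step a fair coin, independent of $\omega$, decides whether the move is horizontal or vertical. Writing $I_n:=\mathbf{1}\{X_n-X_{n-1}\in\{\pm e_1\}\}$, the $I_n$'s are i.i.d.\ Bernoulli$(1/2)$ and independent of $\omega$; setting $\tau_k:=\inf\{n:\sum_{m\le n}I_m=k\}$, the process $Y_k:=X_{\tau_k}\cdot e_1$ is exactly the classical nearest-neighbor RWRE in the i.i.d.\ one-dimensional environment $\{p_i\}_{i\in\mathbb Z}$, while $(X_n\cdot e_2)_{n\ge 0}$ is an independent lazy simple symmetric random walk. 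Hypothesis~\eqref{tnb} is precisely the Kesten--Kozlov--Spitzer condition for this one-dimensional RWRE, so the whole classical one-dimensional theory is at my disposal.

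Parts (i)--(iii) then follow from standard one-dimensional results. Since $\rho$ is non-constant and $E[\rho^{\kappa}]=1$, strict convexity of $x\mapsto x^\kappa$ forces $E[\log\rho]<0$, so by Solomon's theorem $Y_k\to+\infty$ $\mathbb{P}_1$-a.s., and together with the law of large numbers $\tau_k/k\to 2$ this proves (i). Because $\kappa<1$, Jensen's inequality applied to the convex function $x\mapsto x^{1/\kappa}$ gives $E[\rho]\ge(E[\rho^\kappa])^{1/\kappa}=1$, whence Solomon's speed formula yields $Y_k/k\to 0$; combined with $|X_n\cdot e_2|=O(\sqrt{n})$ from the CLT, this proves (ii). For (iii), the Kesten--Kozlov--Spitzer theorem gives that $Y_k/k^\kappa$ converges in distribution to a positive $\kappa$-stable random variable, so $X_n\cdot e_1$ is of order $n^\kappa$ in $Q_1$-probability; since $\kappa>1/2$ this dominates $|X_n\cdot e_2|=O(\sqrt{n})$, and combined with (i) one obtains $X_n/|X_n|_2\to e_1$ in $Q_1$-probability.

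The only substantive work is (iv). Fix $y\in H_{0,e_1}$, so $y\cdot e_1<0$. On the event $\{T_{B_{L,L,e_1}(0)}<T_{H_{y,e_1}}\}$ the walk never enters a column of index smaller than $y\cdot e_1$, so the quenched probability of this joint event is measurable with respect to $\sigma\{\omega_i:i\ge y\cdot e_1\}$; by independence of the columns, the conditional expectation in \eqref{Pl} equals the unconditional annealed probability and the supremum over $\{\omega_i:i<y\cdot e_1\}$ is vacuous. It remains to bound the annealed probability that the walk exits the square $B_{L,L,e_1}(0)=(-L,L)^2$ through a face other than $\{x_1=L\}$, which I split into $A_L=\{Y\text{ reaches }-L\text{ before }+L\}$ and $B_L=\{X_n\cdot e_2\text{ reaches }\pm L\text{ before }Y\text{ leaves }(-L,L)\}$. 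Using the Gambler's ruin formula for the one-dimensional RWRE together with the classical Cram\'er/renewal analysis of the potential $S_k=\sum_{j=0}^{k-1}\log\rho_j$ under~\eqref{tnb} (in particular the Cram\'er tail $\mathbb{P}_1[\sup_k e^{S_k}>x]\sim Cx^{-\kappa}$) and the exponential largeness of the left-side potential $U_{-L}=\prod_{k=-L}^{-1}\rho_k^{-1}$, $Q_1[A_L]$ decays at least like $L^{-\kappa+o(1)}$, so that $L^M Q_1[A_L]\to 0$ since $M<\kappa$. For $B_L$, the Kesten--Kozlov--Spitzer tail estimate $Q_1[T^Y_L>L^{1/\kappa+\delta}]\to 0$ at polynomial rate, combined with Doob's maximal inequality for the lazy vertical walk, yields $Q_1[B_L]\le CL^{1/\kappa-2+o(1)}$; since $\kappa+1/\kappa<5/2$ on $(1/2,1)$, one also has $L^M Q_1[B_L]\to 0$. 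The main technical obstacle is the quantitative annealed backtracking estimate for $A_L$, which is where the full strength of the Kesten--Kozlov--Spitzer hypothesis on $\rho$ is used.
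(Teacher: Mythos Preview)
Your treatment of parts (i)--(iii) is essentially the paper's: both reduce to the classical one-dimensional theory via the horizontal projection (you subordinate at horizontal steps, the paper keeps the lazy projection; these are equivalent), and both invoke Solomon for (i)--(ii) and Kesten--Kozlov--Spitzer for (iii). Your explicit observation that the columnar i.i.d.\ structure makes the conditioning on $\mathcal H_{y,e_1}$ in $(PC)$ vacuous is correct and is something the paper leaves implicit.

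For part (iv) the skeletons agree --- split the bad exit into horizontal backtracking $A_L$ and lateral escape $B_L$ --- but the execution differs in two places. For $A_L$ the paper simply notes that the annealed one-dimensional probability $Q_1[\widetilde T^{e_1}_{-L}<T^{e_1}_L]$ decays \emph{exponentially} in $L$ (a standard large-deviation fact for i.i.d.\ 1D RWRE with $E[\log\rho]<0$; cf.\ the argument on p.\ 744 of \cite{Sz02}). Your Cram\'er/renewal machinery is aimed at the same quantity but you only extract $L^{-\kappa+o(1)}$; the ingredients you list (Kesten tail on the right, exponential growth of the left potential) actually yield exponential decay once you split on $\{V(-L)>L\mu/2\}$ and $\{\max_{i\ge 0}V(i)\le L\mu/4\}$, so your argument can be both shortened and sharpened.

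For $B_L$ the paper takes the time cutoff $L^{2-\varepsilon}$ so that the vertical term $Q_1[T^{e_2}_{\pm L}\le L^{2-\varepsilon}]$ is $\exp(-cL^{\varepsilon})$ by the reflection principle, leaving the single horizontal slowdown probability $Q_1[T^{e_1}_L>L^{2-\varepsilon}]$, which the sharp estimate of \cite{FGP10} bounds by $CL^{-(\kappa-1/2-\varepsilon)}$ --- this is precisely where the value $M=\kappa-\tfrac12-\varepsilon$ comes from. Your cutoff $L^{1/\kappa+\delta}$ with Doob's $L^2$ inequality makes the vertical term only polynomially small, $O(L^{1/\kappa-2+\delta})$, and you then need the numerical inequality $\kappa+1/\kappa<5/2$. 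This works, but your assertion that $Q_1[T^Y_L>L^{1/\kappa+\delta}]\to 0$ ``at polynomial rate'' is not justified: the KKS limit theorem gives only $o(1)$, with no rate. To close this gap you must invoke a quantitative slowdown estimate such as \cite{FGP10}, at which point the paper's choice of cutoff is more efficient since it isolates a single polynomial term of exactly the right order.
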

\begin{proof}
{\it Part (i).}
 We will describe a \emph{one dimensional procedure} which will be used throughout the proofs of items $(i)$ and $(ii)$. Define $\{Y_n:n\geq 0\}:=\{X_n\cdot 
e_1:n\geq0\}$. Note that
    \begin{eqnarray*}
    &P_{0,\omega}[Y_{n+1}=Y_n+e_1\mid Y_n]= \widetilde{\omega}(Y_n,e_1)=p_{Y_n}/2,\\
    &P_{0,\omega}[Y_{n+1}=Y_n-e_1\mid Y_n]=\widetilde{\omega}(Y_n,-e_1)=(1-p_{Y_n})/2 \mbox{, and }\\
    &P_{0,\omega}[Y_{n+1}=Y_n\mid Y_n]=\widetilde{\omega}(Y_n,0)=1/2.\\
    \end{eqnarray*}
    By (\ref{tnb}) it follows that $\widetilde{E}_1[\ln[\widetilde{\rho_0}]]<0$, where $\widetilde{\rho}_0:=\widetilde{\omega}(0,-e_1)/\widetilde{\omega}(0,e_1)$ and $\widetilde{E}_1$ denotes the corresponding expectation in this random environment. Now, from the transience criteria in \cite{Z04} Theorem 2.1.2 one has that $Q_1$- a.s.
    $$
    \lim_{n\to\infty}X_n\cdot e_1=\infty.
    $$

\smallskip

\noindent {\it Part (ii).} Note that

    $$
    \frac{X_n}{n}=\frac{Y_n e_1+ (X_n\cdot e_2)e_2}{n},
    $$
where $\{Y_n:n\ge 0\}$ is the projection of the random walk in the
direction $e_1$ defined in part $(i)$. 
Now, using the strong law of large numbers for this projection (\cite{Z04}, Theorem 2.1.9), we get $Q_1$-a.s., and the fact that $(X_n\cdot e_2)$ is
a random walk which moves with the same probability in both directions, we
conclude that $Q_1$-a.s.

$$
\lim_{n\to\infty}\frac{X_n}{n}=0.
$$

\smallskip
\noindent {\it Part (iii).} We define the random variables $N_1$ and $N_2$ as horizontal and vertical steps performed by the walk $X_n$, respectively. By the very definition of this example, both of them distribute like a binomial law of paratemers $n$ and $1/2$ under the quenched law.
For each $\varepsilon>0$, we have to estimate the  probability

    \begin{equation}
\label{qiu1}
   Q_1\left[\left| \frac{X_n}{|X_n|_2}
-e_1\right|>\varepsilon\right]
=Q_1\left[\left|\frac{\frac{(X_n\cdot e_1)}{n^{\kappa}}e_1+\frac{(X_n\cdot e_2)}{n^{\kappa}}e_2}{\sqrt{\frac{(X_n\cdot e_1)^2}{n^{2\kappa}} +\frac{(X_n\cdot e_2)^2}{n^{2\kappa}}}}-e_1\right|>\varepsilon\right].
    \end{equation}
    Clearly, $X_n \cdot e_2$ under the annealed law has the same law $\widetilde{P}$ of a one dimensional simple symmetric random walk $\{Z_{m}:m\ge 0\}$ at time $m=N_2$.
Note that $\widetilde{P}$- a.s. $N_2/n\rightarrow 1/2$ as $n\to\infty$. Therefore, since $\kappa>1/2$ we see that
    $$
    Q_1\left[\lim_{n\to\infty}\frac{X_n\cdot e_2}{n^{\kappa}}=0\right]=\widetilde{P}\left[\lim_{n\to \infty}\frac{Z_{N_2}}{N_2^{\kappa}}\frac{1}{2^\kappa}=0\right]=1.
    $$
    and hence $Q_1$-a.s.
    $$
    \lim_{n\to\infty}\frac{(X_n\cdot e_2)^2}{n^{2\kappa}}=0.
    $$
    On the other hand, using the convergence theorem of Kesten, Kozlov and Spitzer \cite{KKS75}, we see that
    $$
\lim_{n\to\infty} \frac{X_n\cdot e_1}{\sqrt{(X_n\cdot e_1)^2}}= 1
    $$
    in distribution, and hence also in $Q_1$- probability. It follows that
for each $\varepsilon>0$,
the left hand-side of (\ref{qiu1}) tends to $0$ as $n\to\infty$.

\smallskip
\noindent {\it Part (iv).} For $j\in\{1,2\}$ and $a$ a positive real number, we define the stopping times $T_{a}^{e_j}$ and $\widetilde{T}_{a}^{e_j}$ by
    \begin{equation}
    \label{st1}
    T_{a}^{e_j}:=\inf\{n\geq 0: X_n\cdot e_j\geq a\}
    \end{equation}
    along with
    \begin{equation}
    \label{st2}
    \widetilde{T}_{a}^{e_j}:=\inf\{n\geq 0: X_n \cdot e_j\leq a\}
    \end{equation}
Notice that for $c=1$ and  large $L$ one has the following estimate
\begin{equation}
\label{q1inequality}
Q_1[X_{T_{B_{L,cL,l}}(0)}\not\in \partial^+B_{L,cL,l}(0)]\leq Q_1[\widetilde{T}_{-L}^{e_1}<T_L^{e_1}]+Q_1[ T_{L}^{e_2}\wedge \widetilde{T}_{-L}^{e_2}< T_{L}^{e_1} ].
\end{equation}
The first probability in the right-most side of (\ref{q1inequality}) has an exponential bound in $L$. Observe that the second probability in the right-most side of (\ref{q1inequality}) is less than or equal to
$$
Q_1[ T_{L}^{e_2}\wedge \widetilde{T}_{-L}^{e_2}\leq L^{2+\varepsilon} ]+Q_1[ L^{2+\varepsilon}< T_{L}^{e_1} ].
$$
Keeping the notations introduced in item $(iii)$,
 one sees that for large $L$, there exists a positive constant $K_1$ such that
\begin{eqnarray}
\nonumber
&Q_1[ T_{L}^{e_2}\wedge \widetilde{T}_{-L}^{e_2}\leq L^{2+\varepsilon} ] \leq Q_1[|X_{n}\cdot e_2|\leq L, \mbox{ for all }n\in \mathbb{N}, 0\leq n\leq L^{2+\varepsilon}]\\
\label{estq1}
&\le\widetilde{P}[Z_{N_2(n)}\leq L^{2+\varepsilon}, \mbox{ for all }n\in \mathbb{N}, 0\leq n\leq L^{2+\varepsilon}]\leq \exp\{-K_1L^{\varepsilon}\}.
\end{eqnarray}
On the other hand, using the sharp estimate in Theorem 1.3 in \cite{FGP10} and denoting by $\hat{P}$ the law of underlying one-dimensional random walk corresponding to the annealed law of $(X_n \cdot e_1)_{n\geq 0}$, we can see that for large $L$, there exists a positive constant $K_2$ such that

\begin{eqnarray}
\nonumber
&Q_1[ L^{2+\varepsilon}< T_{L}^{e_1} ]\leq Q_1[ X_{[L^{2+\varepsilon}]}\cdot e_1< L]\\\
\label{estq2}
&\le\hat{P}[Y_{N_1([L^{2+\varepsilon}])}<L]\leq K_2 L^{-(\kappa-1/2-\varepsilon)}.
\end{eqnarray}
Therefore, in view of  inequality (\ref{q1inequality}),  the estimates (\ref{estq1})
and (\ref{estq2}), we complete the proof.
\end{proof}

\medskip

\subsection{Directionally transient random walk without an asymptotic direction}
 Let $\{p_i:i\in\mathbb Z\}$
and $\{p'_j:j\in\mathbb Z\}$
 be two independent i.i.d. copies
of $p$. Following a similar procedure as in the previous example, we consider
in  the lattice $\mathbb{Z}^2$ the canonical vectors $e_1$ and $e_2$, and define the random environment
$\omega=\{\omega((i,j)):(i,j)\in\mathbb Z^2\}$ by,

$$
\omega_{(i,j)}(e_1)=\frac{p_i}{2}\quad {\rm and}\quad
\omega_{(i,j)}(-e_1)=\frac{1}{2}-\frac{p_i}{2}.
$$
together with

$$
\omega_{(i,j)}(e_2)=\frac{p'_j}{2}\quad {\rm and}\quad
\omega_{(i,j)}(-e_2)=\frac{1}{2}-\frac{p'_j}{2}.
$$
We call $\mathbb P_2$ the law of the above environment
and $Q_2$ the annealed law of the corresponding random
walk starting from $0$.

\begin{theorem}
\label{ex2} Consider a random walk in a random environment
with law $\mathbb P_2$. Then, the following are satisfied.
\begin{itemize}
\item[$(i)$] Let $l\in\mathbb S$. Then
 $l\cdot e_1\geq0$ and $l\cdot e_2\geq0$
if and only if  $Q_2$-a.s.

$$
\lim_{n\to\infty}X_n\cdot l=\infty.
$$

\item[$(ii)$] $Q_2$-a.s.

$$
\lim_{n\to\infty}\frac{X_n}{n}=0.
$$

\item[$(iii)$] There exists a non-deterministic $\hat v$
such that

$$
\frac{X_n}{|X_n|_2}\rightarrow\hat v.
$$
in distribution.

\item[$(iv)$] There exists a $c>1$ such that

\begin{equation}
\label{t1p2}
\overline\lim_{L\to\infty}L^{-1}  \log Q_2[X_{T_{B_{L,cL,l}}(0)}\not\in \partial^+B_{L,cL,l}(0)]<0,
\end{equation}
where $l=(1/\sqrt{2}, 1/\sqrt{2} )$. Thus, condition $(T)|l$ \cite{Sz02}
is satisfied.
\end{itemize}

\end{theorem}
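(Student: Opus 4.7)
The plan is to exploit the product structure of $\mathbb{P}_2$. Since $\omega(e_1)+\omega(-e_1)=1/2$ at every site, I can realize $X_n$ by first flipping an iid $\mathrm{Bernoulli}(1/2)$ sequence $\{\xi_k\}$ to decide horizontal versus vertical, and then taking a $\pm 1$ step according to the corresponding one-dimensional environment. Writing $A_n:=X_n\cdot e_1$, $B_n:=X_n\cdot e_2$, and letting $N_1(n)$, $N_2(n)=n-N_1(n)$ be the horizontal/vertical step counts (so $N_i(n)/n\to 1/2$ a.s.), one obtains $A_n=Y_{N_1(n)}$, $B_n=Y'_{N_2(n)}$, where $Y,Y'$ are independent one-dimensional RWRE in the iid environments $\{p_i\}$ and $\{p'_j\}$. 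Convexity of $s\mapsto E[\rho^s]$ together with $E[\rho^\kappa]=1$, $\kappa\in(1/2,1)$, forces $E[\log\rho]<0$; Solomon's criterion then yields $Y_m,Y'_m\to+\infty$ a.s., and Kesten--Kozlov--Spitzer yields $Y_m/m^\kappa\Rightarrow\zeta$, a positive nondegenerate random variable. Conditioning on $\{\xi_k\}$ preserves independence of $Y$ and $Y'$, so
$$
(A_n,B_n)/n^\kappa\ \Rightarrow\ 2^{-\kappa}(\zeta,\zeta'),
$$
with $\zeta,\zeta'$ independent copies of $\zeta$.

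Parts (i)--(iii) are then short. For (i), the ``if'' direction is immediate since $l_1A_n+l_2B_n\to+\infty$ whenever $l_1,l_2\ge 0$ and $l\neq 0$; conversely if, say, $l_1<0$, then $n^{-\kappa}(X_n\cdot l)\Rightarrow 2^{-\kappa}(l_1\zeta+l_2\zeta')$, which is strictly negative with positive probability, incompatible with a.s.\ convergence of $X_n\cdot l$ to $+\infty$. For (ii), Solomon's zero-speed statement gives $Y_m/m\to 0$ a.s., so $A_n/n,B_n/n\to 0$. Part (iii) follows by continuous mapping from the joint weak limit: $X_n/|X_n|_2\Rightarrow(\zeta,\zeta')/\sqrt{\zeta^2+\zeta'^2}$, which is supported in the open first quadrant and non-deterministic.

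The substance of the theorem is (iv). In $(A,B)$-coordinates the box $B_{L,cL,l}(0)$ is $\{|A+B|\le L\sqrt 2\}\cap\{|A-B|\le cL\sqrt 2\}$, with positive face $A+B\ge L\sqrt 2$. A short geometric check shows that if $X_{T_B}\notin\partial^+$ then at $T_B$ either (a) $A+B\le -L\sqrt 2$, or (b) $|A-B|\ge cL\sqrt 2$ and $|A+B|<L\sqrt 2$; for $c>1$ both cases force $\min(A_{T_B},B_{T_B})\le-aL$ with $a:=\min\{1/\sqrt 2,(c-1)/\sqrt 2\}>0$. Hence
$$
Q_2[X_{T_B}\notin\partial^+]\ \le\ P[\min_n A_n\le -aL]+P[\min_n B_n\le -aL].
$$
The problem reduces to proving, for a one-dimensional RWRE in an iid environment with $E[\log\rho]<0$, the annealed exponential bound $P[\min_m Y_m\le-M]\le Ce^{-\gamma M}$. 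Via the quenched representation $P_{0,\omega}[\widetilde T^Y_{-M}<\infty]=\prod_{k=0}^{M-1}f(\theta^{-k}\omega)$ with $f(\omega):=P_{0,\omega}[\widetilde T^Y_{-1}<\infty]$, transience gives $E[f]<1$ and ellipticity of $p$ in $(0,1)$ gives $f\in(0,1)$ a.s., so Jensen yields $E[\log f]<0$. A standard large deviation argument applied to the stationary ergodic sequence $\{\log f(\theta^{-k}\omega)\}_{k\ge 0}$, using $E[\rho^\kappa\ln^+\rho]<\infty$ to control the tails of $\log f$, upgrades the a.s.\ exponential decay $\prod_k f(\theta^{-k}\omega)\lesssim e^{-\gamma M/2}$ into an exponential bound on the annealed expectation. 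This one-dimensional input is the main technical obstacle; once granted, condition $(T)|l$ follows for any sufficiently large $c$.
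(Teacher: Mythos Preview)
Your treatment of (i)--(iii) runs parallel to the paper's: the same coupling with two independent one-dimensional RWRE via a Bernoulli horizontal/vertical split, Solomon's criterion for transience, zero speed, and Kesten--Kozlov--Spitzer for the joint $n^\kappa$-scaling limit. You are in fact more complete on (i), since the paper only argues the ``if'' direction, whereas you dispatch ``only if'' using the weak limit from (iii).

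For (iv), your geometric reduction is correct and essentially equivalent to the paper's: the paper bounds the exit probability by $Q_2[\widetilde T^{e_j}_{-\tilde cL}<T^{e_j}_{cL}]$ for $j=1,2$, while you bound it by $P[\min_m Y_m\le -aL]+P[\min_m Y'_m\le -aL]$; either way the problem reduces to an annealed exponential backtracking estimate for the one-dimensional walk. The gap is in your proposed proof of that estimate. The product representation $P_{0,\omega}[\widetilde T^Y_{-M}<\infty]=\prod_{k=0}^{M-1}f(\theta^{-k}\omega)$ is fine, but the factors are \emph{not} independent (each $f(\theta^{-k}\omega)$ depends on the whole half-line $\{\rho_j:j\ge -k\}$), and there is no ``standard large deviation argument'' for a merely stationary ergodic sequence that turns $E[\log f]<0$ into exponential decay of $E[e^{S_M}]$; the condition $E[\rho^\kappa\ln^+\rho]<\infty$ plays no role here.

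The paper instead follows Sznitman \cite{Sz02}, and the argument is short once you see it. From the explicit gambler's-ruin formula one has $P_{0,\omega}[\widetilde T^Y_{-M}<\infty]\le \min\bigl(1,\sum_{k\ge 0}\Pi_k\bigr)$ with $\Pi_k=\prod_{j=-M+1}^{k}\rho_j$. Since $\min(1,W)\le W^s$ for any $s\in(0,1)$, subadditivity of $x\mapsto x^s$ and the i.i.d.\ structure give
\[
E\bigl[P_{0,\omega}[\widetilde T^Y_{-M}<\infty]\bigr]\ \le\ \sum_{k\ge 0}E[\Pi_k^s]\ =\ \sum_{k\ge 0}\bigl(E[\rho^s]\bigr)^{M+k}\ \le\ \frac{(E[\rho^s])^{M}}{1-E[\rho^s]}.
\]
Strict convexity of $s\mapsto E[\rho^s]$ together with $E[\rho^0]=E[\rho^\kappa]=1$ forces $E[\rho^s]<1$ for every $s\in(0,\kappa)$, and the exponential bound follows. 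Replacing your large-deviation sketch by this computation closes the argument.
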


\begin{proof}
{\it Part (i).}
 It is enough to prove that $Q_2$-a.s.
$$
\lim_{n\to\infty}X_n\cdot e_1=\infty \mbox{ and }\lim_{n\to \infty}X_n\cdot e_2=\infty.
$$
Both assertions follow from an argument similar to the one used
in part $(i)$ of Theorem \ref{te1}, Theorem 2.1.2 in \cite{Z04} and  (\ref{tnb}).

\smallskip
\noindent {\it Part (ii).}
 This proof is similar to case $(ii)$ of Theorem \ref{te1} . 

\smallskip
\noindent {\it Part (iii).}
For $j=1,2$ we define $T_{0,j}=0$. For $j=1,2$ we define

$$
T_{1,j}=\inf\{n\geq0:\  (X_n-X_0)\cdot e_j>0 \mbox{ or } (X_n-X_0)\cdot e_j<0\}
$$
and for $i\geq2$ let
$$
T_{i,j}= T_{1,j}\circ \theta_{T_{i-1,j}}+T_{i-1,j}.
$$
Setting $Y_{n,j}:=X_{T_{n,j}}\cdot e_j$, we see that for $j\in\{1,2\}$, the one dimensional random walks without transitions to itself at each site $(Y_{n,j})_{n\geq0}$ are independent and their transitions at each site $i\in \mathbb{Z}^d$ are determined by $p_i$. Furthermore, for $j\in\{1,2\}$, the strong law of large numbers implies that $Q_2$- a.s.
\begin{equation}
\label{naiveslln}
\lim_{n\to\infty}\frac{T_{n,j}}{n}=2.
\end{equation}
We now apply the result of Kesten, Kozlov and Spitzer \cite{KKS75} to see that there exist constants $C_1$ and $C_2$ such that
$$
\left(\frac{Y_{n,1}}{n^\kappa},\frac{Y_{n,2}}{n^\kappa}\right)\rightarrow\left(C_1\left(\frac{1}{S_{ca}^{1 \ \kappa}}\right)^\kappa, C_2\left(\frac{1}{S_{ca}^{2 \ \kappa}}\right)^\kappa\right)
$$
in distribution, where for $j\in\{1,2\}$, $S_{ca}^{j \ \kappa}$ stands for two independent completely asymmetric stable laws of index $\kappa$, which are positive. Using (\ref{naiveslln}) and properties of convergence in distribution we can see that
$$
\frac{X_n}{|X_n|_2}=\frac{\frac{(X_n\cdot e_1)}{n^{\kappa}}e_1+\frac{(X_n\cdot e_2)}{n^{\kappa}}e_2}{\sqrt{\frac{(X_n\cdot e_1)^2}{n^{2\kappa}} +\frac{(X_n\cdot e_2)^2}{n^{2\kappa}}}}\rightarrow \frac{\left(\frac{C_1}{S_{ca}^{1 \ \kappa}}\right)^\kappa e_1+\left(\frac{C_2}{S_{ca}^{2 \ \kappa}}\right)^\kappa e_2}{\sqrt{\left(\frac{C_1}{S_{ca}^{1 \ \kappa}}\right)^{2\kappa}+ \left(\frac{C_2 }{S_{ca}^{2 \ \kappa}}\right)^{2\kappa}}}
$$
in distribution. Therefore we have proved that the limit $\hat{v}$ is random.

\smallskip
\noindent {\it Part (iv).}
 A first step will be to prove the following decay
$$
\limsup L^{-1} \log Q_2[\widetilde{T}_{-\widetilde{c}L}^{e_j}<T_{cL}^{e_j}]<0
$$
for arbitrary positive constants $\widetilde{c}$ and $c$ (see (\ref{st1}) and (\ref{st2}) for the notations). We will prove this only in the case $j=1$ since the case $j=2$ is
similar. Following the notation introduced in Theorem \ref{te1} item $(i)$ and denoting the greatest integer function by $[\cdot]$, we see that it is sufficient to prove that for large $L$ there exists a positive constant $\widehat{C}$ such that:
\begin{equation}
\label{claimex}
\widetilde{E}_1[P_{0,\omega}[Y_{n} \mbox{ hits } -[\widehat{c}L]+1 \mbox{ before } [cL]+1 ]]\leq \exp\{-\widehat{C}L\}.
\end{equation}
To this end, for a fixed random environment $\omega$, if we define
$$
\mathfrak{V}_i^{L}:=
P_{i,\omega}[Y_{n} \mbox{ hits } -[\widehat{c}L]+1 \mbox{ before } [cL]+1],
$$
the Markov property makes us see that $\mathfrak{V}_i^{L}$ satisfies the following difference equation for integers $i\in[[\widehat{c}L]+2,[cL]]$,
$$
\mathfrak{V}_i^{L}=(1-p_i)\mathfrak{V}_{i-1}^{L}+p_1\mathfrak{V}_{i+1}^{L},
$$
with the constraints
$$
\mathfrak{V}_{[\widetilde{c}L]+1}^L=1 \mbox{  and  } \mathfrak{V}_{[cL]+1}^L=0.
$$
This system can be solved by the method developed by Chung in \cite{Ch67}, Chapter 1, Section 12. Applying it we see that
$$
\mathfrak{V}_0^L=\frac{\exp\{\sum_{-[\widehat{c}L+1],0}\}+\ldots +\exp\{\sum_{-[\widehat{c}L]+1,[cL]}\}}{1+\exp\{\sum_{-[\widehat{c}L]+1,-[\widehat{c}L]+2}\}+\ldots+\exp\{\sum_{-[\widehat{c}L]+1, [cL]}\}},
$$
where we have adopted the notation  $\sum_{z<m\leq z'}:=\log \rho(m)$ and $\rho(m):=(1-p_m)/p_m$.
A slight variation of the argument in \cite{Sz02} page 744 completes the proof of claim (\ref{claimex}). On the other hand, considering the probability
$$
Q_2[X_{T_{B_{L,2L,l}}(0)}\not\in \partial^+B_{L,2L,l}(0)],
$$
we observe that this expression is clearly bounded from above by (see Figure \ref{slabex})
$$
Q_2[\widetilde{T}_{-\frac{\sqrt{2}}{2}L}^{e_1}<T_{\sqrt{2}L}^{e_1}]+Q_2[\widetilde{T}_{-\frac{\sqrt{2}}{2}L}^{e_2}<T_{\sqrt{2}L}^{e_2}]
$$
\begin{figure}[h]
  \begin{center}
  \includegraphics[width=6cm]{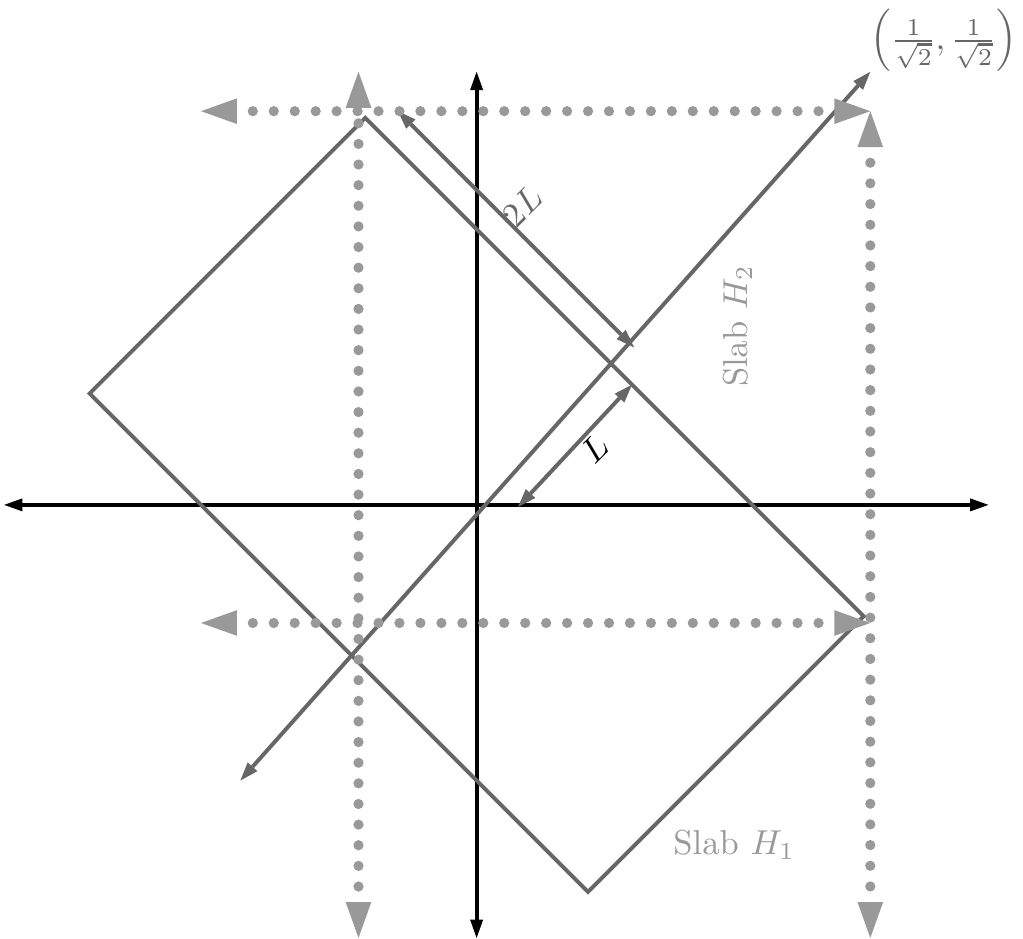}\\
  \caption{A geometric sketch of the bound for $Q_2[X_{T_{B_{L,2L,l}}(0)}  \notin \partial^+ B_{L,2L,l}(0)]$.}\label{slabex}
  \end{center}
\end{figure}
\noindent
In virtue of the claim (\ref{claimex}) the last expression has an exponential bound and this finishes the proof.
\end{proof}

\section{Preliminary discussion}
\label{pddis}

In this section we will  derive some important properties that are satisfied by
the non-effective polynomial and cone mixing conditions. In subsection
\ref{sub1} we will show that the non-effective polynomial condition is
weaker than the conditional form of Kalikow's condition introduced in
\cite{CZ02}. In subsection \ref{sub2} we will show that the cone mixing condition
implies ergodicity. Finally, in subsection \ref{sub3}, we will prove
that the non-effective polynomial condition in a given direction implies
the non-effective polynomial condition in a neighborhood of that direction
with a lower degree.

\medskip

 \subsection{Non-effective polynomial condition  and its relation with other directional
transience
conditions}
\label{sub1}
Here we will
 discuss the relationship between the condition non-effective polynomial
condition and
 other transience conditions.
Furthermore we will  show that the conditional non-effective polynomial condition is
weaker than the conditional version of Kalikow's condition introduced by
Comets-Zeitouni in \cite{CZ01} and \cite{CZ02}.

For reasons that will become clear in the next section, the following definition,
which is actually weaker than the conditional non-effective polynomial condition,
will be useful.
Let $l\in\mathbb S^{d-1}$, $M\ge 1$ and $c>0$. We say that condition $(P)_{M,c}|l$ is satisfied,
 and we call it the \textit{non-effective polynomial condition} if there
is a constant $c>0$ such that

$$
\overline\lim_{L\to\infty}L^M  P_0[X_{T_{B_{L,cL,l}}(0)}\not\in \partial^+B_{L,
{ c}L,l}(0)]= 0.
$$
It is straightforward to see that $(PC)_{M,c}|l$ implies $(P)_{M,c}|l$.

It should be pointed out, that for a fixed $\gamma\in (0,1)$,
 if both in the conditional and non-conditional
non-effective polynomial conditions the polynomial decay is replaced by a stronger stretched
exponential decay of the form $e^{-L^\gamma}$, one would
obtain a condition defined on rectangles equivalent to condition $(T)_\gamma$
introduced by Sznitman in \cite{Sz03}, and also a conditional version of it.
On the other hand, as we will see now, the conditional non-effective polynomial
condition is implied by  Kalikow's condition as defined
in \cite{CZ01} for environments which
are not necessarily i.i.d. Let us recall
this definition.
For $V$ a finite, connected subset of $\mathbb{Z}^d$, with $0\in V$ , we let
$$
\mathfrak{F}_{V^c}=\sigma\{\omega(z,\cdot):z\not \in V \}.
$$
The \textit{Kalikow's random walk} $\{X_n:n\geq 0\}$ with state space in $V\cup \partial V$, starting from $y\in V\cup \partial V$
is defined by the transition probabilities

$$
\widehat{P}_V(x,x+e):=\left\{
\begin{array}{ll}
\frac{E_0[\sum_{n=0}^{T_{V^c}}\mathds{1}_{\{X_n=x\}}\omega(x,e)|\mathfrak{F}_{V^c}]}
{E_0[\sum_{n=0}^{T_{V^c}}\mathds{1}_{\{X_n=x\}}|\mathfrak{F}_{V^c}]}
,&\quad{\rm for}\quad x\in V\ {\rm and}\ e\in E\\
1&\quad{\rm for}\quad x\in \partial V\ {\rm and}\  e=0.
\end{array}
\right.
$$
We denote by $\hat P_{y,V}$ the law of this random walk and by $\hat E_{y,V}$ the
corresponding expectation. The importance of Kalikow's random walk stems from the fact that

\begin{equation}
\label{distofXtu}
X_{T_{V^c}} \ \ \mbox{has the same law under }\widehat{P}_{0,V}\ \mbox{and under }P_0[\cdot|\mathfrak{F}_{V^c}]
\end{equation}
(see   (\cite {K81})).
Let $l\in\mathbb S^{d-1}$.
We  now define Kalikow's condition with respect to the  direction $l$
 as the following requirement:
there exits a positive constant $\delta$ such that

$$
\inf_{V: x\in V} \ \widehat{d}_{V}(x) \cdot l\geq\delta,
$$
where

$$
\widehat{d}_{V}(x):=\widehat{E}_{x,V}[X_1-X_0]=\sum_{e\in E}
e\widehat{P}_V(x,x+e)
$$
denotes the drift of Kalikow's random walk  at $x$, and the infimum runs over all finite connected
subset $V$ of $\mathbb{Z}^d$ such that $0\in V$.
The following result shows that Kalikow's condition is indeed stronger that the
conditional non-effective polynomial criteria.

\medskip

\begin{proposition}
\label{kit}
Let $l\in \mathbb{S}^{d-1}$. Assume Kalikow's condition with respect to $l$. Then
there exists an $r>0$ such that for all $y \in H_{0,l}$ one has that

\begin{eqnarray*}
\nonumber
&\!\!\!\!©∂\displaystyle\limsup_{\substack{L\to\infty}}L^{-1} \sup \log P_0[X_{T_{B_{L,rL,l}}(0)}\not\in \partial^+B_{L,
 rL,l}(0),
T_{B_{L,rL,l}(0)}<T_{H_{y,l}}|\mathcal H_{y,l}]\\
\nonumber 
&<0,
\end{eqnarray*}
where the supremum is taken in the same sense as in (\ref{Pl}). In particular, Kalikow's condition with respect to direction $l$
implies $(PC)_{M,r}|l$ for all $M>0$.
\end{proposition}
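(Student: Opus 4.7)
The plan is to reduce the conditional exit probability to an estimate for Kalikow's random walk on a suitable subdomain, and then to extract exponential decay from a standard exponential supermartingale driven by the uniform drift lower bound $\delta$ furnished by Kalikow's condition. Concretely, I would fix $r>4/\delta$ and, for every $L$ large enough that $L>|y\cdot l|+1$, introduce the auxiliary domain
$$
W := B_{L,rL,l}(0)\cap\{z\in\mathbb{Z}^d:\ z\cdot l\geq y\cdot l\}.
$$
This is a finite connected set containing $0$, so the Kalikow walk $\widehat P_{0,W}$ is well defined. The requirement $L>|y\cdot l|+1$ places the back face $\partial V\cap\{z\cdot l\leq -L\}$ of $V:=B_{L,rL,l}(0)$ strictly inside $H_{y,l}$, so on the event $\{T_V<T_{H_{y,l}}\}$ every non-positive exit of $V$ is forced to occur through one of the lateral faces $\{|z\cdot R(e_i)|\geq rL\}$, with $T_W=T_V$ and $X_{T_W}=X_{T_V}$. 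It is therefore enough to bound
$$
P_0\bigl[\,X_{T_W}\in\partial V \text{ and } |X_{T_W}\cdot R(e_i)|\geq rL \text{ for some } i\geq 2 \,\bigm|\mathcal H_{y,l}\,\bigr].
$$

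Next I would invoke (\ref{distofXtu}) applied to $W$: the law of $X_{T_W}$ under $P_0[\,\cdot\mid \mathfrak F_{W^c}]$ coincides with its law under $\widehat P_{0,W}$. Since $H_{y,l}\subset W^c$, we have $\mathcal H_{y,l}\subset \mathfrak F_{W^c}$, and the tower property yields
$$
P_0\bigl[X_{T_W}\in A\bigm|\mathcal H_{y,l}\bigr] \;=\; E_0\bigl[\,\widehat P_{0,W}[X_{T_W}\in A]\bigm|\mathcal H_{y,l}\,\bigr].
$$
Consequently, any deterministic upper bound on $\widehat P_{0,W}[X_{T_W}\in A]$ that holds uniformly in the realizations of $\omega$ off $W$ automatically dominates the supremum appearing in the statement.

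To produce such a bound, observe that Kalikow's hypothesis forces $\widehat d_W(x)\cdot l\geq \delta$ at every $x\in W$, so under $\widehat P_{0,W}$ the projection $S_n:=X_n\cdot l$ is an adapted nearest-neighbor process with conditional drift at least $\delta$. A one-step Taylor expansion produces some $\lambda>0$ depending only on $\delta$ for which $\widehat E_x\bigl[e^{-\lambda(X_1-x)\cdot l}\bigr]\leq e^{-\lambda\delta/2}$, so that $M_n:=\exp(-\lambda S_n+n\lambda\delta/2)$ is a nonnegative supermartingale. Optional stopping at the a.s.\ finite time $T_W$, together with the deterministic bound $S_{T_W}\leq L+1$, gives $\widehat E_{0,W}\bigl[e^{\lambda\delta T_W/2}\bigr]\leq e^{\lambda(L+1)}$, and Chebyshev then yields $\widehat P_{0,W}[T_W>4L/\delta]\leq C e^{-\lambda L}$. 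But the transverse coordinates $X_n\cdot R(e_i)$ change by at most one per step, so no lateral face of $V$ can be reached in fewer than $rL>4L/\delta$ steps; this rules out lateral exit except on $\{T_W>4L/\delta\}$. Inserting the resulting exponential bound into the tower identity produces $\limsup_{L\to\infty} L^{-1}\log(\,\cdots\,)\leq -\lambda<0$, which trivially absorbs any polynomial factor $L^M$.

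The argument is essentially routine; the delicate point is the choice of $W$ as $V\cap\{z\cdot l\geq y\cdot l\}$ rather than $V$ itself, because only with this restriction does (\ref{distofXtu}) propagate correctly through the conditioning on $\mathcal H_{y,l}$, and only on $W$ is Kalikow's drift lower bound available at every site relevant to each non-positive exit scenario.
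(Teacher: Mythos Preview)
Your proof is correct and follows the same essential strategy as the paper: truncate the box at level $y\cdot l$, apply Kalikow's identity (\ref{distofXtu}) on the truncated domain so that the conditioning on $\mathcal H_{y,l}$ is absorbed into $\mathfrak F_{W^c}$, and then use the uniform drift $\delta$ to show that the Kalikow walk exits in time $O(L)$ with overwhelming probability, which rules out lateral exit since that requires at least $rL$ steps. The differences are purely technical. First, the paper bounds $\widehat P_{0,V}[T_V > cL/\delta]$ via Azuma's inequality applied to the compensated martingale $M_n^V = X_n - X_0 - \sum_{j<n}\widehat d_V(X_j)$, whereas you build the exponential supermartingale $e^{-\lambda S_n + n\lambda\delta/2}$ directly; these are interchangeable devices. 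Second, the paper treats the two regimes $y\cdot l\in(-L,0)$ and $y\cdot l\leq -L$ separately (invoking in the latter case the supermartingale $e^{-\psi X_n\cdot l}$ from \cite{Sz01}), while you note that for fixed $y$ and $L\to\infty$ the condition $L>|y\cdot l|+1$ eventually holds, so only the first regime is needed for the $\limsup$. Your version is a bit more streamlined; the paper's case split makes more explicit what happens at finite $L$ but is not required for the statement as written.
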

\begin{proof}
Suppose that Kalikow's condition is satisfied with constant $\delta>0$.
We will first assume that $y\cdot l\in (-L,0)$.
 Let $c>1$. For $y \in H_{0,l}$ and  $L\ge 1$ consider  the box
$$
V:=R\left([y\cdot l,L]\times\left(-\frac{c}{\delta}L,\frac{c}{\delta}L\right)^{d-1}
\right).
$$
Therefore, using
(\ref{distofXtu}) we find that
\begin{eqnarray}
\nonumber
&P_0[X_{T_{B_{L,\frac{c}{\delta}L,l}}(0)}\not\in \partial^+B_{L,
 cL,l}(0),
T_{B_{L,\frac{c}{\delta}L,l}(0)}<T_{H_{y,l}}|\mathfrak{F}_{V^c}]\\
&\leq
\nonumber
P_0[X_{T_{\widetilde{U}}}\cdot R(e_j)\ge \frac{c}{\delta}L \mbox{ for some }j \in [2,d] ,|X_{T_{\widetilde{U}}}\cdot l|< L | \mathfrak{F}_{V^c}]\\
\label{totfk}
&=
 \widehat{P}_{0,V}[X_{T_{\widetilde{U}}}\cdot R(e_j)\geq \frac{c}{\delta}L \mbox{ for some }j \in [2,d],|X_{T_{\widetilde{U}}}\cdot l|< L ].
\end{eqnarray}
Notice that on the set
$$
\{X_{T_{V}}\cdot R(e_j)\ge \frac{c}{\delta}L \mbox{ for some }j,X_{T_{V}}\cdot l< L\},
$$
one has $\widehat{P}_{0,V}$-a.s. that
$$
T_{V}\ge\left[\frac{cL}{\delta}\right].
$$
Thus, by means of the auxiliary martingale $\{M_n^{V}:n\ge 0\}$
  defined by

$$
M_n ^{V}:=X_n-X_0-\sum_{j=0}^{n-1}\widehat{d}_V (X_j),
$$
which has bounded increments (indeed bounded by $2$) we can see that on $\{T_{V}>\left[\frac{cL}{\delta}]\right\}$, we have that for $L$ large enough that

\begin{equation}
\label{martingale}
M_{[\frac{cL}{\delta}]} ^{V}\cdot l < L -\left(
\frac{cL} {\delta}-1\right)\delta=(1-c)L+\delta< \frac{(1-c)L}{2}
\end{equation}
$\widehat{P}_{0,V}$-a.s.  Now,
 it will be convenient at this point to recall Azuma's inequality
(see for example \cite{Sz01}) for martingales with increments bounded by $2$,

$$
\widehat{P}_{0,V}[M_n^V\cdot w >A]\leq\exp\left\{-\frac{A^2}{8n}\right\} \ \mbox{ for } \ A>0, \ n\geq0, \ |w|=1.
$$
 Using this inequality and (\ref{martingale}) we obtain that

\begin{eqnarray}
\nonumber
&\widehat{P}_{0, V}[X_{T_{\widetilde{U}}}\cdot R(e_j)>\frac{c}{\delta}L \mbox{ for some }j,X_{T_{V}}\cdot l\leq L ]\\
\nonumber
&\leq
\widehat{P}_{0, V}[T_{V}>\frac{cL}{\delta}]\\
\label{ikalikow}
&\leq
\widehat{P}_{0, V}[M_{[\frac{cL}{\delta}]} ^{V}\cdot (-l) >(c-1)L/2]
\leq\exp\{-c_1 L\},
\end{eqnarray}
for a suitable positive constant $c_1$.
Finally, coming back to (\ref{totfk}), we can then conclude that

$$
\displaystyle\limsup_{\substack{L\to\infty}}L^{-1} \sup \log P_0[X_{T_{B_{L,rL,l}}(0)}\not\in \partial^+B_{L,
 rL,l}(0),
T_{B_{L,rL,l}(0)}<T_{H_{y,l}}|\mathcal H_{y,l}]<0,
$$
where $r=\frac{c}{\delta}$.
Let us now assume that $y\cdot l\le -L$. By Lemma 1.1 in \cite{Sz01}
we know that there exists a
positive constant $\psi$ depending on $\delta$ such that for all $V$
finite connected subsets of $\mathbb{Z}^d$ with $0\in V$
$$
e^{-\psi X_n\cdot l}
$$
is a supermartingale with respect to the canonical filtration of the walk
under Kalikow's law $\widehat{P}_{0,V}$.
Thus, we have that

$$
\widehat{P}_{0,V}[X_{T_V}\cdot l\leq-L]\leq \exp\{- \psi L\}
$$
by means of the stopping time theorem applied at time $T_V$.
By an argument similar to the one developed for the case
$y\cdot l\in (-L,0)$,
we can finish the estimate in the case $y\cdot l\le L$.
\end{proof}
\subsection{Cone mixing and ergodicity}
\label{sub2}
The main objective in this section is to establish that any stationary probability measure $\mathbb{P}$ defined on the canonical $\sigma-$ algebra $\mathfrak{F}$, which
satisfies property $(CM)_{\phi,\alpha}|l$ is ergodic with respect to space-shifts.
We do not claim any originality about such an implication, but since
we where not able to find an adecuate reference, we have included the proof
of  it here
for completeness.

Let us recall that
a set $E\in \mathfrak{F}$ is an invariant set if 
$$
\theta^{-1}_x E:=E
$$
for all $x\in \mathbb{Z}^d$.
\begin{theorem}
Assume that the probability space $(\Omega, \mathfrak{F}, \mathbb{P})$ has the property  $(CM)_{\phi,\alpha}|l$ and is stationary, then the probability measure $\mathbb{P}$ is \emph{ergodic}, i.e. for  any invariant set $E\in \mathfrak{F}$ we have:
$$
\mathbb{P}[E]\in \{0,1\}.
$$
\end{theorem}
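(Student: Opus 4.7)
The plan is to run the standard ergodicity-from-mixing argument: approximate an invariant event by a cylinder, use stationarity to place two translates of that cylinder into the two regions on which the cone mixing hypothesis operates, decouple with $(CM)_{\phi,\alpha}|l$, and then identify each translate with the original event via invariance.

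More concretely, let $E$ be invariant with $\mathbb{P}[E]>0$ (else there is nothing to prove). Fixing $\varepsilon>0$, I would first pick, via the standard cylinder approximation in the product $\sigma$-algebra, a finite $F\subset\mathbb{Z}^d$ and an event $A\in\sigma\{\omega(z,\cdot):z\in F\}$ with $\mathbb{P}[E\triangle A]<\varepsilon$. Next I would produce two lattice shifts. Since $F$ is finite, I can find $y_{1}\in\mathbb{Z}^d$ with $F+y_{1}\subset\{z:z\cdot l\le 0\}$; for each $r>0$ I can also find $y_{2}=y_{2}(r)\in\mathbb{Z}^d$ with $F+y_{2}\subset C(rl,l,\alpha)$, because a cone of positive aperture eventually contains any finite lattice set once its apex is placed sufficiently far in the direction $l$. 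Set $A_{1}:=\theta_{-y_{1}}^{-1}A$ and $A_{2}:=\theta_{-y_{2}}^{-1}A$. Stationarity gives $\mathbb{P}[A_{i}]=\mathbb{P}[A]$ and, combined with the invariance $\theta_{y}^{-1}E=E$, also $\mathbb{P}[E\triangle A_{i}]=\mathbb{P}[E\triangle A]<\varepsilon$ for $i=1,2$.

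Now $A_{1}\in\sigma\{\omega(z,\cdot):z\cdot l\le 0\}$ and $A_{2}\in\sigma\{\omega(z,\cdot):z\in C(rl,l,\alpha)\}$, so cone mixing directly yields
$$
\bigl|\mathbb{P}[A_{1}\cap A_{2}]-\mathbb{P}[A_{1}]\mathbb{P}[A_{2}]\bigr|\le \phi(r|l|_{1})\,\mathbb{P}[A_{1}].
$$
On the other hand, $|\mathbb{P}[A_{i}]-\mathbb{P}[E]|<\varepsilon$ and $|\mathbb{P}[A_{1}\cap A_{2}]-\mathbb{P}[E]|\le 2\varepsilon$ by the triangle inequality together with $\mathbb{P}[E\setminus A_{i}]<\varepsilon$. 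Sending $r\to\infty$ (using $\phi(r|l|_{1})\to 0$) and then $\varepsilon\to 0$, I obtain $\mathbb{P}[E]=\mathbb{P}[E]^{2}$, hence $\mathbb{P}[E]\in\{0,1\}$.

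The only genuinely nontrivial step is the geometric one: confirming that, for every finite $F$ and every sufficiently large $r$, one can translate $F$ into the cone $C(rl,l,\alpha)$ by a lattice vector. This is immediate once one observes that the cone has nonempty interior and contains the half-line $\{rl+sl:s\ge 0\}$, so a ball around any point far enough along that half-line lies in the cone and contains a lattice translate of $F$. Everything else is boilerplate: the approximation, two applications of stationarity plus invariance, one application of cone mixing, and a limit.
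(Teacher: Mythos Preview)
Your argument is correct and is essentially the same ``mixing $\Rightarrow$ ergodic'' approximation argument as in the paper: approximate $E$ by a cylinder, translate it so that one copy sits in the half-space and another in the far cone, apply $(CM)_{\phi,\alpha}|l$, and use invariance plus stationarity to collapse everything back to $\mathbb{P}[E]=\mathbb{P}[E]^{2}$ (the paper arrives at the equivalent $\mathbb{P}[E]\mathbb{P}[\Omega\setminus E]=0$ using one shift instead of two, but this is purely organizational). The only slip is a sign convention: with the usual definition $(\theta_{x}\omega)(z)=\omega(z+x)$, the event $\theta_{-y_{1}}^{-1}A$ depends on $F-y_{1}$ rather than $F+y_{1}$, so you should either take $A_{i}=\theta_{y_{i}}^{-1}A$ or choose $y_{i}$ with $F-y_{i}$ in the required region; this is cosmetic and does not affect the proof.
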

\begin{proof}
Let $E\in \mathfrak{F}$ be an invariant set. Note that  for each $\epsilon>0$ 
there exists a cylinder measurable set $A\in \mathfrak{F}$ such that

$$
\mathbb{P}[A\triangle E]<\epsilon.
$$
Since $A$ is a cylinder measurable set, it can be represented as

\begin{eqnarray*}
& A=\{\omega(x,\cdot): \ x\in F, F\subset\mathbb{Z}^d, \ |F|<\infty, \\
& \omega(x_i,\cdot)\in P_i, \mbox{ for $x_i\in F$}, \ P_i\in\mathcal{B}(\mathcal{P}_d) \},
\end{eqnarray*}
where  $\mathcal{B}(\mathcal{P}_d)$ stands for the Borel $\sigma-$algebra on the compact subset $\mathcal{P}_d$ of $\mathbb{R}^{2d}$. Choose  now $L$ such that
$$
\phi(L)<\epsilon.
$$
Plainly, for $L$ we can find an $x\in \mathbb{Z}^d$ such that $\theta_x A$ and $A$ are $L$ separated on cones with respect to direction $l$:
there exists $y\in \mathbb{Z}^d$ such that
$$
A\in \sigma\{\omega(z,\cdot):\ z\cdot l\leq y\cdot l-L\}
$$
along with
$$
\theta_x A \in \sigma\{\omega(z,\cdot):\ z\in C(y, l, \alpha)\}.
$$
We can suppose that $\mathbb{P} [E]>0$, otherwise there is nothing to prove. So as to complete the proof we have to show that $\mathbb{P}[E]=1$. Therefore taking $\epsilon$ small enough we can suppose that $\mathbb{P}[A]>0$. Thus, using the cone mixing property,  we get that

\begin{equation}
\label{dif1}
-\mathbb{P}[A]\phi(L)\leq\mathbb{P}[A\cap(\theta_x A)^c ]-\mathbb{P}[A]\mathbb{P}[\Omega-A]\leq\mathbb{P}[A]\phi(L).
\end{equation}
On the other hand, since $E$ is an invariant set, we see that

\begin{equation}
\label{dif2}
\mathbb{P}[\theta_x A\triangle E]=\mathbb{P}[\theta_x A\triangle \theta_x E]=\mathbb{P}[\theta_x(A\triangle E)]<\epsilon,
\end{equation}
which implies 
\begin{equation}
\label{dif3}
\mathbb{P}[A\triangle \theta_x A]\leq \mathbb{P}[(A \triangle E)\cup (\theta_x A\triangle\theta_x E)]<2\epsilon.
\end{equation}
In turn, from inequality (\ref{dif3}), it is clear that $\mathbb{P}[A\cap(\theta_x A)^c ]<2\epsilon$. Now, using the inequality (\ref{dif1}) one has that
$$
\mathbb{P}[A]\mathbb{P}[\Omega-A]\leq 2\epsilon + \mathbb{P}[A]\phi(L).
$$
As a result, we see that
\begin{eqnarray}
\nonumber
&\mathbb{P}[E]\mathbb{P}[\Omega-E]<(\mathbb{P}[A]+\epsilon)(\mathbb{P}[\Omega-A]+\epsilon)\\
\nonumber
&=\mathbb{P}[A]\mathbb{P}[\Omega-A]+\epsilon+\epsilon^2
< 4\epsilon + \phi(L)\leq 5\epsilon.
\end{eqnarray}
 Hence, since $\epsilon>0$ is arbitrary we conclude that
 $\mathbb{P}[E]\mathbb{P}[\Omega-E]=0$. Therefore if $\mathbb{P}[E]>0$,
this implies $\mathbb{P}[E]=1$.
\end{proof}

\subsection{Polynomial Decay implies Polynomial decay in a neighborhood}
\label{sub3}

In this subsection we prove that whenever $(PC)_{M,c}|l$ holds, for prescribed positive constants $M$ and $c$, then we can choose
$2(d-1)$ directions where we still have polynomial decay although of less order.
More precisely, we can prove the following.
\begin{proposition}
\label{lneighbors}
Suppose that $(P)_{M,c}|l$ is satisfied with $c>0$ for some $M>6(d-1)$, then there exists an $\alpha>0$ such that if we define for $i\in[2,d]$,
$$
l_{+ i}:=\frac{l + \alpha R(e_i)}{|l + \alpha R(e_i)|}
$$
and
$$
l_{- i}:=\frac{l - \alpha R(e_i)}{|l - \alpha R(e_i)|},
$$
then
$$
(P)_{N,2c}|l_{\pm i}
$$
is satisfied with $N=\frac{M}{3}-1$.
\end{proposition}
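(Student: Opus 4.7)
The plan is to cover the tilted rectangle $\widetilde B := B_{L, 2cL, l_{\pm i}}(0)$ by a chain of smaller $l$-aligned rectangles of side $L_0 := \lfloor L^{1/3}\rfloor$ and iterate the hypothesis $(P)_{M, c}|l$. Concretely, set $\tau_0 := 0$ and, for $j \geq 1$, let $\tau_j := \inf\{n \geq \tau_{j-1} : X_n \notin B_{L_0, cL_0, l}(X_{\tau_{j-1}})\}$. Taking $k := \lceil 3L/L_0 \rceil$, define the ``successful iteration'' event
$$
\mathcal G_j := \bigl\{X_{\tau_j} \in \partial^+ B_{L_0, cL_0, l}(X_{\tau_{j-1}})\bigr\}, \qquad \mathcal G := \bigcap_{j=1}^k \mathcal G_j.
$$
The argument splits into a deterministic geometric inclusion $\{X_{T_{\widetilde B}} \notin \partial^+ \widetilde B\} \subseteq \mathcal G^c$ and a probabilistic bound on $P_0[\mathcal G^c]$ via a union bound over $j$.

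For the geometric inclusion, fix $\alpha$ small in terms of $c$ (concretely, $\alpha \leq c/4$ should suffice). An induction on $j$ shows that on $\mathcal G$ we have $X_{\tau_j} \cdot l \geq jL_0$ and $|X_{\tau_j} \cdot R(e_m)| \leq jcL_0 + O(j)$ for $m = 2, \ldots, d$, while between consecutive $\tau_{j-1}$ and $\tau_j$ the walk stays $O(L_0)$-close to $X_{\tau_{j-1}}$ in every coordinate. Writing $R'$ for the rotation with $R'(e_1) = l_{\pm i}$ and using the explicit decompositions
$$
X \cdot l_{\pm i} = \frac{X \cdot l \pm \alpha\, X \cdot R(e_i)}{\sqrt{1+\alpha^2}}, \qquad X \cdot R'(e_i) = \frac{\mp \alpha\, X \cdot l + X \cdot R(e_i)}{\sqrt{1+\alpha^2}},
$$
together with $X \cdot R'(e_m) = X \cdot R(e_m)$ for $m \neq 1, i$, elementary trigonometry then yields that, on $\mathcal G$, each perpendicular projection $|X_n \cdot R'(e_m)|$ stays strictly below $2cL$ throughout iterations $1, \ldots, k$, while the $l_{\pm i}$-projection exceeds $L$ at some iteration $k_0 < k$. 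This forces $T_{\widetilde B} \leq \tau_{k_0}$ with exit through $\partial^+ \widetilde B$.

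For the probabilistic bound, one applies the strong Markov property under the quenched law at each $\tau_{j-1}$ together with the translation invariance of $\mathbb P$ to reduce the annealed probability of $\mathcal G_j^c$ to an instance of $(P)_{M, c}|l$ applied at the origin, so that $P_0[\mathcal G_j^c] = o(L_0^{-M})$. A union bound yields $P_0[\mathcal G^c] \leq k \cdot o(L_0^{-M}) = o(L \cdot L_0^{-(M+1)}) = o(L^{1 - (M+1)/3})$. With $N := M/3 - 1$,
$$
L^N\, P_0\bigl[X_{T_{\widetilde B}} \notin \partial^+ \widetilde B\bigr] \leq L^N\, P_0[\mathcal G^c] = o(L^{-1/3}) \to 0,
$$
which is exactly $(P)_{N, 2c}|l_{\pm i}$. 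The main obstacle is the geometric inclusion: one must choose $\alpha$ small enough in terms of $c$ so that the cumulative perpendicular drift accumulated over the $\sim L/L_0$ iterations does not push the walk through any of the $2(d-1)$ perpendicular faces of $\widetilde B$, which requires careful trigonometric bookkeeping of the tilt $\arctan \alpha$. The assumption $M > 6(d-1)$ is what makes the resulting exponent $N = M/3 - 1$ large enough to be useful in the subsequent sections of the paper.
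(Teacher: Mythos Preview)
There is a genuine gap in the probabilistic step. You write that ``one applies the strong Markov property under the quenched law at each $\tau_{j-1}$ together with the translation invariance of $\mathbb P$ to reduce the annealed probability of $\mathcal G_j^c$ to an instance of $(P)_{M,c}|l$ applied at the origin, so that $P_0[\mathcal G_j^c]=o(L_0^{-M})$.'' This is not valid: the annealed law $P_0$ is \emph{not} Markovian. After applying the quenched Markov property one gets
\[
P_0[\mathcal G_j^c]=\mathbb E\Bigl[\sum_{y}P_{0,\omega}[X_{\tau_{j-1}}=y]\;P_{y,\omega}\bigl[X_{T_{B_{L_0,cL_0,l}(y)}}\notin\partial^+B_{L_0,cL_0,l}(y)\bigr]\Bigr],
\]
and the two factors inside the $\mathbb P$-expectation are correlated through $\omega$; translation invariance only tells you that $\mathbb E[P_{y,\omega}[\cdot]]=o(L_0^{-M})$ for each \emph{fixed} $y$, not that the whole expression is $o(L_0^{-M})$. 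Nothing in the hypotheses (no mixing is assumed in this proposition) lets you decouple the random starting point from the environment in the next box.

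The paper handles precisely this issue by introducing \emph{good-environment} events $G_i=\{\omega:P_{y,\omega}[\text{bad exit}]\le L^{-M/2}\text{ for all }y\in F_i\}$ and bounding $\mathbb P[G_i^c]\le |F_i|\cdot L^{-M/2}$ via Chebyshev and stationarity. But even if you insert this device into your scheme, your parameter choice $L_0\sim L^{1/3}$, $k\sim L^{2/3}$ is fatal: the set $F_j$ of possible positions of $X_{\tau_{j-1}}$ has cardinality of order $(kcL_0)^{d-1}\sim L^{d-1}$, and summing $\mathbb P[G_j^c]$ over $k\sim L^{2/3}$ iterations produces an exponent that cannot be pushed below $-(M/3-1)$ for any $d\ge 2$. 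The paper instead rotates the target box and covers it by a \emph{fixed} number (four) of $l$-boxes of scale comparable to the target scale; with a bounded number of iterations the combinatorial factor $|F_i|\sim L^{d-1}$ is beaten by $L^{-M/2}$ exactly when $M>6(d-1)$, which is what yields $N=M/3-1$.
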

\begin{proof}[Proof of Proposition \ref{lneighbors}]
We will just give the proof for direction $l_{-2}$, the other cases being analogous.
Throughout the proof we pick $\alpha\in(0,1)$ and we define the angle 
\begin{equation}
\label{def-beta}
\beta:=\arctan(\alpha).
\end{equation}
Consider the  rotation $R''$ on $\mathbb{R}^d$ defined by
$$
R'':=\left(
       \begin{array}{cccccc}
         \cos(\beta)& -\sin(\beta) & 0 & \ldots & \ldots &0\\
         \sin(\beta) & \cos(\beta) & 0 & \ldots & \ldots &0\\
         0 & 0& 1 & \ldots & \ldots&0\\
          \vdots & \vdots & \vdots  & \vdots   &\vdots& \vdots\\
          \vdots &\vdots  & \vdots  & \vdots  &\vdots &\vdots\\
         0 & 0 & \ldots &\ldots  & 1&0\\
         0 & 0 & \ldots &\ldots & 0&1 \\
       \end{array}
     \right).
$$
where this representation matrix is taken in the vector space base $\{R(e_1), R(e_2),\ldots,R(e_d)\}$.
It will be useful to define a new rotation
$$
R':=R''R
$$
together with the \emph{rotated box} $\widetilde{B}_{L}(0)$ given by
\begin{equation*}
\widetilde{B}_{L}(0):=R'\left(\left[-L\lambda_1(\alpha),L\lambda_2(\alpha)\right]\times \left[-L c \lambda_3(\alpha),L c \lambda_3(\alpha)\right]^{d-1}\right)\cap\mathbb{Z}^d,
\end{equation*}
where
\begin{eqnarray*}
&\lambda_1(\alpha):=\frac{1+\frac{1}{\alpha}}{\sqrt{\cot^2(\beta)+1}}\\
&\lambda_2(\alpha):=\frac{\frac{1}{\alpha}-1}{\sqrt{\cot^2(\beta)+1}}\\
&\lambda_3(\alpha):=\frac{\sqrt{(1-\cot(\beta))^2+(1-\tan(\beta))^2}}{|\tan(\beta)+\cot(\beta)|}
\end{eqnarray*}
Notice that with these definitions, $P_0$- almost surely:
\begin{equation}
\label{contained}
X_{T_{\widetilde{B}_{L}(0)}}\not\in \partial^+\widetilde{B}_{L}(0) \Rightarrow X_{T_{B_{L,L,l}(0)}}\not \in \partial^+B_{L,L,l}(0).
\end{equation}
Figure \ref{rb} shows the boxes involved in (\ref{contained}).
\begin{figure}[h]
  \begin{center}
  \includegraphics[width=5cm]{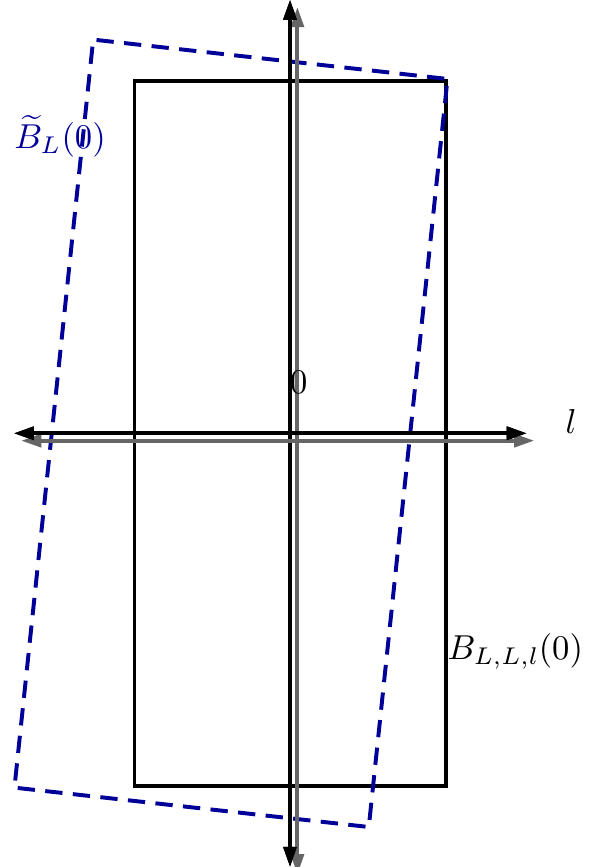}\\
  \caption{The choice of boxes.}\label{rb}
  \end{center}
\end{figure}

\medskip
\noindent
As a result we have that
$$
P_0[X_{T_{\widetilde{B}_{L}(0)}}\not\in \partial^+\widetilde{B}_{L}(0)]\leq L^{-M}.
$$
Furthermore, a straightforward computation makes us see that the scale factor $\lambda_3(\alpha)$ is less than $\frac{4}{3}$ whenever $\alpha\leq\frac{1}{9}$. Therefore if we let the positive $\alpha\leq\frac{1}{9}$ one has that
\begin{equation}
\label{ancho}
\lambda_3(\alpha)\leq \frac{4}{3}.
\end{equation}
For technical reasons, we need to introduce an auxiliary box. Specifically, we first set
$$
h:=\frac{\frac{1}{\alpha}-1}{\sqrt{1+(\frac{1}{\alpha})^2}}=  \frac{1-\alpha}{\sqrt{1+\alpha^2}}
$$
and observe that $\frac{4}{5}<h<1$. We then can introduce the new box $\overline{B}_{l_{-2},L}(0)$ defined by
\begin{equation*}
\overline{B}_{l_{-2},L}(0):=R'\left(\left[-L(h+2),Lh\right]\times \left[-cL\lambda_3(\alpha),Lc\lambda_3(\alpha)\right]^{d-1}\right)\cap\mathbb{Z}^d.
\end{equation*}
From this definition, we obtain
$$
P_0[X_{T_{\overline{B}_{l_{-2},L}(0)}}\not\in \partial^+\overline{B}_{l_{-2},L}(0)]\leq L^{-M}.
$$
In order to complete the proof, we claim that for large enough $U$ the probability
$$
P_0[X_{T_{\overline{B}_{l_{-2},U}(0)}}\not\in \partial^+\overline{B}_{l_{-2},U}(0)],
$$
decays polynomially as a function of $U$, where the box $\overline{B}_{l_{-2},U}(0)$ is defined by
$$
\overline{B}_{l_{-2},U}(0):=R'([-U,U]\times[-2cU, 2cU]^{d-1}).
$$
The general strategy to follow will be to stack smaller boxes up inside of $\overline{B}_{l_{-2},U}(0)$ and then using the  Markov property along with  \emph{good environment sets} we ensure that the walk exits from box $\overline{B}_{l_{-2},U}(0)$ by $\partial^+\overline{B}_{l_{-2},U}(0)$ with probability bigger than $1-P(U)$, where $P$ is a polynomial function. Specifically, we let
\begin{equation}
\label{L}
L:=\frac{U}{h+2}.
\end{equation}
Introduce now a sequence of stopping times as follows
$$
T_1=T_{\overline{B}_{l_{-2},L}(0)},
$$
and for $i>1$
$$
T_i=T_{i-1}+T_1\circ\theta_{T_{i-1}}.
$$
For simplicity we write $\widehat{T_1}$ instead of $T_{\overline{B}_{l_{-2},U}(0)}.$
In view of (\ref{ancho}) and (\ref{L}) it is clear that to ensure that the random walk
exits at time $\widehat T_1$ through $\partial^+\bar B_{l_{-2},U}(0)$, it is enough
that it exits through the corresponding positive boundaries through four
succesive times, so that
\begin{eqnarray}
\nonumber
&P_0[X_{\widehat{T_1}}\in \partial^+\overline{B}_{l_{-2},U}(0)]\geq P_0\left[X_{T_1}\in\partial^+\overline{B}_{l_{-2},L}(0),\right.\\
\nonumber
&\left.\left(X_{T_{1}}\in \partial^+\overline{B}_{l_{-2},L}(X_{T_1})\right)\circ\theta_{T_1},\left(X_{T_1}\in \partial^+\overline{B}_{l_{-2},L}(X_{T_2})\right)\circ\theta_{T_2},\right.\\
\label{inpol}
&\left. \left(X_{T_1}\in \partial^+\overline{B}_{l_{-2},L}(X_{T_{3}})\right)\circ\theta_{T_{3}}\right].
\end{eqnarray}
In order to use (\ref{inpol}), let $i$ be a positive integer number and consider the lattice sets sequence $(F_i)_{i\geq 1}$ defined by
$$
F_1=\partial^+\overline{B}_{l_{-2},L}(0),
$$
and for $i>1$, we define by induction
$$
F_i=\bigcup_{y\in F_1}\partial^+\overline{B}_{l_{-2},L}(y).
$$
We now define for $i\geq1$, the environment events $G_i$ by
\begin{eqnarray*}
&G_i=
\left\{\omega\in \Omega:P_{y,\omega}[\left(X_{T_1}\in \partial^+\overline{B}_{l_{-2},L}(X_{T_i})\right)\circ\theta_{T_i}]\right. \\
&\left. \geq1-L^{-\frac{M}{2}}, \mbox{ for each } y \in F_i\right\}.
\end{eqnarray*}
Note that
\begin{eqnarray*}
&P_0[X_{\widehat{T_1}}\in \partial^+\overline{B}_{l_{-2},U}(0)] \\
&\ge P_0\left[X_{T_1}\in\partial^+\overline{B}_{l_{-2},L}(0),\left(X_{T_{1}}\in \partial^+\overline{B}_{l_{-2},L}(X_{T_1})\right)\circ\theta_{T_1},\right.\\
&\left.\left(X_{T_1}\in \partial^+\overline{B}_{l_{-2},L}(X_{T_2})\right)\circ\theta_{T_2},\left(X_{T_1}\in \partial^+\overline{B}_{l_{-2},L}(X_{T_{3}})\right)\circ\theta_{T_{3}}\mathds{1}_{G_3}\right].
\end{eqnarray*}
By the Markov property applied at time $T_3$ and the very meaning of $G_3$, we get that the last expression is equal to
\begin{eqnarray}
\nonumber
&\sum_{y\in F_3}\mathbb{E}\left[P_{0,\omega}\left[X_{T_1}\in\partial^+\overline{B}_{l_{-2},L}(0),\left(X_{T_{1}}\in \partial^+\overline{B}_{l_{-2},L}(X_{T_1})\right)\circ\theta_{T_1},\right.\right.\\
\nonumber
&\left.\left.\left(X_{T_{1}}\in \partial^+\overline{B}_{l_{-2},L}(X_{T_2})\right)\circ\theta_{T_2}\right]P_{y,\omega}[X_{T_{\overline{B}_{l_{-2},L}(y)}}\in
\partial^+\overline{B}_{l_{-2},L}(y)]\mathds{1}_{G_3}\right]\geq\\
\nonumber
&(1-L^{-\frac{M}{2}})\left(P_0\left[X_{T_1}\in\partial^+\overline{B}_{l_{-2},L}(0),\left(X_{T_{1}}\in \partial^+\overline{B}_{l_{-2},L}(X_{T_1})\right)\circ\theta_{T_1},\right. \right.\\
\label{g33}
&\left.\left.\left(X_{T_{1}}\in \partial^+\overline{B}_{l_{-2},L}(X_{T_2})\right)\circ\theta_{T_2}\right]-\mathbb{P}[(G_3)^c]\right).
\end{eqnarray}
Repeating the above argument, one has the following upper bound for the right-most expression of (\ref{g33}),
\begin{equation}
\label{cubic}
(1-L^{-\frac{M}{2}})^4-(1-L^{-\frac{M}{2}})^3\mathbb{P}[(G_1)^c]-(1-L^{-\frac{M}{2}})^2\mathbb{P}[(G_2)^c]-(1-L^{-\frac{M}{2}})\mathbb{P}[(G_3)^c].
\end{equation}
At this point, we would like to obtain for $i\in|[1,3]|$,  an upper bound of the probabilities
$$
\mathbb{P}[(G_i)^c].
$$
To this end, we first observe that  Chebyshev's inequality and our hypothesis imply
\begin{equation*}
\mathbb{P}[(G_1)^c]\leq \sum_{y\in F_1}\mathbb{E}[\mathds{1}_{\{P_{y,\omega}[X_{T_{\overline{B}_{l_{-2},L}(y)}}\in\partial^+\overline{B}_{l_{-2},L}(y)]>L^{-\frac{M}{2}}\}}]\leq
\mid F_1\mid L^{-\frac{M}{2}}.
\end{equation*}
Clearly, we have the estimate $\mid F_1\mid \leq \left(\frac{8}{3}L\right)^{d-1}$
(recall (\ref{ancho})). As a result, we see that
\begin{equation}
\label{g1}
\mathbb{P}[(G_1)^c]\leq \left(\frac{8}{3}L\right)^{d-1} L^{-\frac{M}{2}}.
\end{equation}
By a similar procedure we can conclude that
\begin{equation}
\label{g2}
\mathbb{P}[(G_2)^c]\leq\left(\frac{16}{3}L\right)^{d-1}L^{-\frac{M}{2}}\quad
{\rm and}\quad \mathbb{P}[(G_3)^c]\leq\left(\frac{24}{3}L\right)^{d-1}L^{-\frac{M}{2}}.
\end{equation}
Combining the estimates in (\ref{inpol}) and (\ref{g2}) 
and the assumption $M\geq6(d-1)$ we see that

\begin{equation*}
P_0[X_{\widehat{T_1}}\not \in \partial^+\overline{B}_{l_{-2},U}(0)]\leq3 \frac{6(8)^{d-1}}{2^{-\frac{M}{3}}}U^{-\frac{M}{3}}.
\end{equation*}
This ends the proof by choosing the required $\alpha$ as any number in the open interval $(0,\frac{1}{9})$.
\end{proof}

\medskip

\section{Backtracking of the random walk out of a cone}
\label{pdd}

Here we will provide a uniform control on the probability that
 a random walk starting form the vertex of a cone stays inside
the cone forever. It will be useful to this end to define

\begin{equation}
\label{defdp}
D':=\inf\{n \in \mathbb{N}:X_n \not \in \mathcal{C}(\alpha, l, X_0)\},
\end{equation}
where as before $l\in\mathbb S^{d-1}$.

\medskip

\begin{proposition}
\label{D'}
 Let $l\in\mathbb S^{d-1}$.
Suppose that $(P)_{M,c}|l$ holds, for some $M>6d-3$.
Then there exists a positive constant $c_2(d)>0$ such that $P_{0} [ D'= \infty ]>c_2(d)$.
\end{proposition}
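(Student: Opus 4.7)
The plan is to combine Proposition \ref{lneighbors} with a multiscale renormalization in the direction $l$, together with a union bound over the $2(d-1)$ lateral directions defining the cone.

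First, I apply Proposition \ref{lneighbors}. Since $M > 6d - 3 > 6(d-1)$, there exists an $\alpha > 0$ such that $(P)_{N, 2c}|l_{\pm i}$ holds for every $i \in [2, d]$ with $N = \frac{M}{3} - 1 > 2d - 2$. I take this $\alpha$ as the aperture of the cone in the definition of $D'$. Writing $\mathcal{C}(\alpha, l, 0) = \bigcap_{i=2}^{d}\{x : x \cdot l_{+i} \geq 0, \ x \cdot l_{-i} \geq 0\}$, the event $\{D' = \infty\}$ is the simultaneous event that the walk never crosses any of the $2(d-1)$ faces of the cone.

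Second, fix a large $L_0$ to be chosen and set $L_k := L_0 \cdot 2^k$. Starting at $Y_0 := 0$, define inductively
\[
Y_{k+1} := X_{T_{B_{L_k, cL_k, l}(Y_k)}},
\]
together with the forward-exit event $G_k := \{Y_{k+1} \in \partial^+ B_{L_k, cL_k, l}(Y_k)\}$. By the Markov property and $(P)_{M, c}|l$, for $L_0$ large enough one has $P_0[G_k^c \mid \mathcal{F}_{T_{k-1}}] \leq L_k^{-M}$, where $\mathcal{F}_{T_{k-1}}$ is the canonical filtration at the previous exit time. On $G_0 \cap \cdots \cap G_k$ one has $Y_{j+1} \cdot l \geq Y_j \cdot l + L_j$, so that the whole $k$-th excursion takes place at distance at least $c_1 \alpha L_k$ from each face $\{y : y \cdot l_{\pm i} = 0\}$ of the cone, where $c_1 > 0$ depends on $c$; here the constraint $\alpha \leq \frac{1}{2c+1}$ guarantees that each box is itself contained in $\mathcal{C}(\alpha, l, 0)$ once $Y_k \cdot l \geq L_k$. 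For each direction $l_{\pm i}$, define $H_k^{\pm, i}$ as the event that during the $k$-th excursion the walk does not reach the half-space $\{y : y \cdot l_{\pm i} < 0\}$. Because a failure requires a macroscopic transverse backtracking of order $\alpha L_k$, a separate application of $(P)_{N, 2c}|l_{\pm i}$ at the Markov shift $Y_k$, possibly iterated to cover the full backtracking distance, yields $P_0[(H_k^{\pm, i})^c \mid \mathcal{F}_{T_{k-1}}] \leq C L_k^{-N}$ for a universal constant $C$.

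Third, combine by union bound. Since $M, N > 2d - 2$, both series $\sum_k L_k^{-M}$ and $\sum_k L_k^{-N}$ are geometric and summable. Hence
\[
P_0[D' = \infty] \geq 1 - \sum_{k \geq 0} L_k^{-M} - 2(d-1) C \sum_{k \geq 0} L_k^{-N},
\]
and choosing $L_0$ sufficiently large (depending only on $d$, $c$, $\alpha$) the right-hand side is bounded below by a positive constant $c_2(d) > 0$.

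The main obstacle is the lateral-direction estimate in the second step: the condition $(P)_{N, 2c}|l_{\pm i}$ only bounds the probability of a single box being exited through the wrong side, but we need a genuine \emph{no-backtracking-across-a-face} statement at each Markov shift $Y_k$. This requires an auxiliary renormalization inside each scale together with a careful geometric check that backtracking across the cone face must traverse a box of the required size, so that the polynomial decay can indeed be triggered.
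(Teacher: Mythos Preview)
Your strategy has the right ingredients, but there is a genuine gap at the base of the renormalization that cannot be closed with the polynomial condition alone. You launch the scheme from $Y_0=0$, the vertex of the cone, and with doubling scales $L_k=L_0\,2^k$ one has $Y_k\cdot l\ge \sum_{j<k}L_j=L_k-L_0<L_k$ on $G_0\cap\cdots\cap G_{k-1}$. Hence every box $B_{L_k,cL_k,l}(Y_k)$ reaches back to $l$-coordinate $-L_0$ and is \emph{never} contained in the cone; the hypothesis ``once $Y_k\cdot l\ge L_k$'' in your geometric claim is never met. More concretely, the event $(H_0^{\pm,i})^c$ that the walk enters $\{y\cdot l_{\pm i}<0\}$ during the $0$-th excursion has no reason to be small: the starting point satisfies $0\cdot l_{\pm i}=0$, a single first step in an unfavourable direction already realizes $(H_0^{\pm,i})^c$, and $(P)_{N,2c}|l_{\pm i}$ controls only the \emph{exit point} of a box, not whether the path ever dipped below a hyperplane. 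The claimed bound $P_0[(H_k^{\pm,i})^c]\le C L_k^{-N}$ is therefore unjustified for small $k$, and taking $L_0$ large does not help.

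The paper closes this gap with an ingredient you have not used: uniform ellipticity $(UE)|l$. It first proves Lemma \ref{lemmaD'}, which is precisely the doubling renormalization but run in a \emph{single} direction $l'$ and started from a point $x$ already satisfying $x\cdot l'\ge 2^m$; with this offset every successive box stays in $\{y\cdot l'\ge 0\}$, so positive-boundary exits at all scales automatically force $D_{l'}=\infty$ and no analogue of your $H_k^{\pm,i}$ events is needed. Then, to pass from the vertex $0$ to such a deep point simultaneously for all $2(d-1)$ directions $l_{\pm i}$, the paper forces the walk along a fixed admissible path of length $O(2^m)$ at elliptic cost $(2\kappa)^{c_3 2^m}$, and finishes with a union bound over the faces using the choice $y(m)>1-\frac{1}{2(d-1)}$. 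Separating the argument into one renormalization per face also dissolves the ``auxiliary renormalization inside each scale'' that you correctly flag as an obstacle at the end of your proposal.
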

In what follows we prove  this proposition. With the purpose
of making easier the reading, we introduce here  some notations.
Let $l'\in\mathbb S^{d-1}$ and choose a rotation $R'$ on $\mathbb{R}^d$ with the property
$$R'(e_1)=l'$$
\medskip

\noindent For each $x\in\mathbb Z^d$, real numbers $m>0$, $c>0$ and integer $i\ge 0$ we define the box

\begin{eqnarray*}
&B_i(x):=\\
&x+R'\left(\left(-2^{m+i},2^{m+i}\right)\times\left(-2c2^{m+i},2c2^{m+i}\right)^{d-1}\right)\cap\mathbb{Z}^d
\end{eqnarray*}
along with its "positive boundary"

$$
\partial^{+} B_i(x):= \partial B_i(x)\cap \{x+R'\left( (2^{m+i},\infty)\times\mathbb R^{d-1}
\right)\}.
$$
We also need slabs perpendicular to direction $l'$. Set
$$
V_0(x):=x+R'\left([-2^m,2^m]\times\mathbb R^{d-1}\right)\cap\mathbb{Z}^d
$$
and  for $i\geq1$,

$$
V_i(x):=x+R'\left(\left[-2^{m}, \sum_{j=0}^{i}2^{m+j}\right]\times\mathbb R^{d-1}\right)\cap\mathbb{Z}^d.
$$
The positive part of the boundary for this set is defined as

$$
\partial^+ V_i(x):=\partial V_i(x) \cap \left\{x+ R' \left(
\left( \sum_{j=0}^{i}2^{m+j},\infty\right)\times\mathbb R^{d-1}\right)\right\}.
$$
Furthermore, we will define recursively a sequence of stopping times as follows.
First, let

$$
T_0:=T_{B_0(X_0)}.
$$
and for $i\geq1$

$$
T_i:=T_{B_i(X_{ T_{i-1}})}\circ \theta_{ T_{i-1} } + T_{i-1}.
$$
We now need to define the first time of entrance of the random walk
 to
the hyperplane $R'\left((-\infty,0)\times\mathbb R^{d-1}\right)$,
$$
D_{l'}:=\inf\{n \ge 0:X_n\cdot l' < 0\}.
$$
With these notations we can prove:

\begin{lemma}
\label{lemmaD'}
Assume  $(P)_{N,2c}|l'$ where $c>0$ , for some $N> 2(d-1)$.
Then, for all $m\in\mathbb N$ and $x \in \{z\in\mathbb Z^d: z\cdot l'\geq 2^m\}$, we have that

$$
P_x[D_{l'}=\infty]\geq y(m)
$$
where $y(m)$ does not depend on $l'$ and satisfies $\lim_{m\rightarrow\infty}y(m)=1$.
\end{lemma}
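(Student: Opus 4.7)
Plan of proof. The strategy is a multiscale/renormalization argument: at each scale $2^{m+i}$ we insist that the walk exit the box $B_i(X_{T_{i-1}})$ through its positive boundary $\partial^+ B_i(X_{T_{i-1}})$, and we show that the probability of failing at some scale is summable and vanishes as $m\to\infty$. Set $T_{-1}:=0$, $X_{T_{-1}}:=x$, and $E_i:=\{X_{T_i}\in\partial^+ B_i(X_{T_{i-1}})\}$ for $i\geq 0$. On $E_0\cap\cdots\cap E_{i-1}$ an immediate induction gives
\[
X_{T_{i-1}}\cdot l'\;\geq\; x\cdot l'+\sum_{j=0}^{i-1}2^{m+j}\;\geq\; 2^m+(2^{m+i}-2^m)\;=\;2^{m+i},
\]
so the box $B_i(X_{T_{i-1}})$, whose $l'$-extent downwards is only $2^{m+i}$, is contained in $\{z: z\cdot l'\geq 0\}$. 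Between $T_{i-1}$ and $T_i$ the walk stays inside this box, hence $\bigcap_{i\geq 0}E_i\subseteq\{D_{l'}=\infty\}$, and it suffices to show that $\sum_{i\geq 0}P_x[E_i^c\cap E_0\cap\cdots\cap E_{i-1}]$ can be made arbitrarily small by taking $m$ large.

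For a fixed $i$, the quenched Markov property at the stopping time $T_{i-1}$ and decomposition over the possible values $y$ of $X_{T_{i-1}}$ yield
\[
P_x[E_i^c,E_0,\ldots,E_{i-1}]=\sum_{y\in A_i}\mathbb{E}\bigl[P_{x,\omega}[X_{T_{i-1}}=y,E_0,\ldots,E_{i-1}]\,P_{y,\omega}[X_{T_{B_i(y)}}\notin\partial^+ B_i(y)]\bigr],
\]
where $A_i$ denotes the set of sites reachable from $x$ through $i$ successive positive exits. Bounding the past-trajectory factor by $1$ and using the stationarity of $\mathbb{P}$ to translate the bad-exit probability to the origin, we obtain
\[
P_x[E_i^c,E_0,\ldots, E_{i-1}]\;\leq\;|A_i|\,P_0\bigl[X_{T_{B_{L_i,2cL_i,l'}(0)}}\notin\partial^+ B_{L_i,2cL_i,l'}(0)\bigr],\qquad L_i:=2^{m+i}.
\]
The set $A_i$ is easy to control: at each positive exit the lateral displacement grows by at most $4c\cdot 2^{m+j}$, so $A_i$ fits inside a $(d-1)$-dimensional slab of side $\lesssim 2^{m+i}$, giving $|A_i|\leq C_d(2^{m+i})^{d-1}$.

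Now $(P)_{N,2c}|l'$ ensures that, for any $\epsilon>0$ and all $L$ beyond some threshold, the above annealed probability is at most $\epsilon L^{-N}$. Taking $m$ large enough that $2^m$ exceeds this threshold,
\[
\sum_{i\geq 0}P_x[E_i^c,E_0,\ldots, E_{i-1}]\;\leq\; C_d\epsilon\sum_{i\geq 0}2^{(d-1-N)(m+i)}\;\leq\; C'_d\epsilon\,2^{-(N-d+1)m},
\]
which tends to $0$ as $m\to\infty$ since the hypothesis $N>2(d-1)$ ensures in particular $N-d+1>0$ and geometric decay. Setting $y(m):=\max\{0,1-C'_d\epsilon\,2^{-(N-d+1)m}\}$ therefore delivers the claim with $y(m)\to 1$, and every constant involved depends only on $d$ and $c$, not on $l'$. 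The central technical difficulty, and the reason we cannot avoid paying the combinatorial factor $|A_i|$, is that $\mathbb{P}$ is not i.i.d.: conditioning on the past trajectory and the environment seen up to $T_{i-1}$ can distort the law of the environment in the fresh box $B_i(X_{T_{i-1}})$, so no clean strong-Markov decoupling of the successive scales is available. The polynomial hypothesis is just strong enough to absorb this combinatorial overhead at every scale simultaneously.
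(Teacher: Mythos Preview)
Your argument is correct and in fact cleaner than the paper's. Both proofs set up the same multiscale scheme with boxes $B_i(X_{T_{i-1}})$ and successive positive-boundary exits, and both must absorb a combinatorial factor of size roughly $(2^{m+i})^{d-1}$ coming from the possible locations of $X_{T_{i-1}}$. The difference is how this factor is absorbed. The paper introduces ``good environment'' events $G_{i-1}=\{\omega: P_{y,\omega}[X_{T_{B_i(y)}}\in\partial^+B_i(y)]>1-2^{-N(m+i-1)/2}\text{ for all }y\in F_{i-1}\}$, inserts $\mathds{1}_{G_{i-1}}$ into the recursion, and bounds $\mathbb{P}[(G_{i-1})^c]$ by Chebyshev; this splits the exponent and is precisely where the hypothesis $N>2(d-1)$ enters. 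You instead bound the past-trajectory quenched factor $P_{x,\omega}[X_{T_{i-1}}=y,E_0,\ldots,E_{i-1}]$ by $1$ before averaging over $\mathbb{P}$, which delivers the full annealed decay $2^{-N(m+i)}$ at once and would actually go through under the weaker assumption $N>d-1$. Your route is shorter for this lemma; the paper's good-set approach, on the other hand, yields quenched information (a large set of environments on which the exit is good at every candidate site), which is a pattern they reuse elsewhere. One small point: the $l'$-thickness of your set $A_i$ can be of order $i$ rather than $O(1)$, so strictly $|A_i|\lesssim i\,(2^{m+i})^{d-1}$, but this extra polynomial factor is harmless against the geometric decay.
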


\begin{proof}
From the fact that $(P)_{N,2}|l'$ holds, we can (and we do) assume that there
exists a $m>0$ large enough, such that for any positive integer $i$ one has that
\begin{equation}
\label{t1}
P_0[X_{T_{B_i (0)}}\in \partial^+ B_i(0)]\geq 1-2^{-N(m+i)}
\end{equation}
holds. By stationarity, we have for $x\in \mathbb{Z}^{d}$:
\begin{equation}
\label{t1x}
P_{x}[X_{T_{B_i (x)}}\in \partial^+ B_i(x)]\geq 1-2^{-N(m+i)} .
\end{equation}
Throughout this proof, let us choose $x\in\{z\in\mathbb Z^d:z\cdot l'\ge 2^m\}$.
For reasons that will be clear through the proof, we need to estimate for $i\ge 1$ the following probability

\begin{equation}
\label{Ii0}
I_i:=P_{x}[X_{T_{V_i (x)}}\in\partial^+ V_i(x)],
\end{equation}
and with this aim, in view of (\ref{t1x}), we have
$$
I_0 \geq P_x[X_{T_{B_0 (x)}}\in \partial^+ B_0(x)] \geq 1-2^{-Nm}\geq  1-2^{-N\frac{m}{2}}.
$$
Now, as a preliminary computation for the recursion, we begin to estimate $I_1$. Note that

\begin{equation}
\label{I1in1}
I_1 \geq P_ x[ X_{T_{0}}\in \partial^+ B_0 (X_0) , ( X_{T_{B_1(X_0)}} \in \partial^+ B_1 (X_0))\circ \theta_{T_0}].
\end{equation}
Using the strong Markov property at time $T_0$ we then see that

\begin{eqnarray}
\nonumber
&I_1 \ge \sum_{y\in\partial^+B_0(x)}\mathbb{E}\left[ P_{x,\omega}[X_{T_{0}}\in \partial^+ B_0 (X_0) , X_{T_{0}}=y] \right.\\
\nonumber
& \times \left.P_{y,\omega}[X_{T_{B_1(y)}} \in \partial^+ B_1 (y)] \right]\\
\nonumber
 &\geq \sum_{y\in\partial^+B_0(x)}\mathbb{E}\left[ P_{x,\omega}[X_{T_{0}}\in \partial^+ B_0 (X_0) , X_{T_{0}}=y]\right.\\
\label{I1in2}
&\times\left.P_{y,\omega}[X_{T_{B_1(y)}} \in \partial^+ B_1 (y)]\mathds{1}_{G_0} \right],
\end{eqnarray}
where

\begin{eqnarray*}
&G_0:=\\
&\{w\in\Omega:P_{y,\omega}[X_{T_{B_1(y)}}\in \partial^+ B_1 (y)]>1-2^{-N\frac{m}{2}},\ \mbox{for all}\ y \in \partial^+ B_0 (x)\}.
\end{eqnarray*}
Thus, it is clear that
\begin{equation}
\label{I1in3}
I_1\geq\left(1-2^{-N\frac{m}{2}}\right)\left(P_{x}[X_{T_{0}}\in \partial^+ B_0 (X_0)]-
\mathbb P[(G_0)^c]\right).
\end{equation}
Notice that by (\ref{t1x}) and Chebyshev's inequality
\begin{eqnarray}
\nonumber
&\mathbb P[ (G_0)^{c}] \leq \sum_{y\in\partial^+B_0(x)}\mathbb{P}[ P_{y,\omega}[X_{T_{B_1(y)}}\not\in\partial^+ B_1 (y) ]\geq 2^{-N\frac{m}{2}}]\\
\nonumber
&\leq \sum_{y\in\partial^+B_0(x)}P_y[X_{T_{B_1(y)}}\not\in \partial^+ B_1 (y) ]2^{N\frac{m}{2}}\\
\nonumber
&= |\partial^+B_0(x)|2^{N\frac{m}{2}}]P_0[X_{T_{B_1(0)}}\not\in \partial^+ B_1 (0)]\\
\label{G0in}
&\leq (2c 2^{m+1})^{d-1}2^{N\left(\frac{m}{2}-(m+1)\right)}\leq (2c 2^{m+1})^{d-1}2^{-N\frac{m}{2}}.
\end{eqnarray}
Plugging (\ref{G0in}) into (\ref{I1in3}) we see that

\begin{equation}
\label{ione}
I_1\geq(1-2^{-N\frac{m}{2}})(1-2^{-N\frac{m}{2}}-(2c 2^{m+1})^{d-1}2^{-N\frac{m}{2}}).
\end{equation}
 Hereafter we can do the general recursive procedure. For this end, we  define for $i\geq1$

\begin{equation}
\label{Ji}
\begin{split}
J_i:=P_0[X_{T_{0}}\in \partial^+ B_0 (X_0), ( X_{T_{B_1(X_0)}} \in \partial^+ B_1 (X_0))\circ \theta_{T_0},\ldots\\
\ldots ,( X_{T_{B_i(X_0)}} \in \partial^+ B_i (X_0))\circ \theta_{T_{i-1}}].
\end{split}
\end{equation}
It is straightforward that $I_i\geq J_i$.
Furthermore, through induction on $i\ge 1$, we will establish the following claim
\begin{equation}
\label{claim}
J_i\geq(1-2^{-N\frac{(m+i-1)}{2}})\left[J_{i-1}-2^{-N\frac{(m+i-1)}{2}}
\left(\sum_{j=0}^{i-1}2c 2^{(m+j)+1}\right)^{d-1} \right].
\end{equation}
To prove this, we first define the \emph{extended boundary}
 of the pile of boxes at a given step as

$$
F_0:=\partial B_0(x)\cap \{x+R'((2^m,\infty)\times\mathbb R^{d-1}))\},
$$
and for $i\ge 2$

$$
F_{i-1}:=\partial\left\{\cup_{y\in F_{i-2}}B_{i-1}(y)\right\}\cap
\{x+R'((2^{m+i-1},\infty)\times\mathbb R^{d-1}))\}.
$$
Using these notations, we can apply the strong Markov property to (\ref{Ji}) at time $T_{i-1}$,
to get that
\begin{eqnarray*}
&J_i= \sum_{y \in F_{i-1}}\mathbb{E} \left[ P_{x,\omega}[X_{T_{0}}\in \partial^+ B_0 (X_0) , \ldots\right.\\
&\left.\ldots, ( X_{T_{B_{i-1}(X_0)}} \in \partial^+ B_{i-1} (X_0))\circ \theta_{T_{i-2}}, X_{T_{i-1}}=y]
 P_{y,\omega}[ X_{T_{B_i(X_0)}} \in \partial^+ B_i(X_0)]\right].
\end{eqnarray*}
Following the same strategy used to deduce (\ref{ione}), it will be convenient to introduce
for each $i\ge 2$ the event
\begin{eqnarray*}
&G_{i-1}:=\\
&\{\omega\in \Omega : P_{y, \omega}[X_{T_{B_i(y)}} \in \partial^+ B_i(y)]> 1- 2^{-N\frac{(m+i-1)}{2}},\ \mbox{for all}\ y \in F_{i-1}\}.
\label{Gi}
\end{eqnarray*}
Inserting the indicator function of the event $G_{i-1}$ into (\ref{Ji}) we get that

\begin{eqnarray*}
&J_i\ge \\
&\sum_{y \in F_{i-1}}\mathbb{E} \left[ P_{x,\omega}[X_{T_{0}}\in \partial^+ B_0 (X_0) , \ldots,( X_{T_{B_{i-1}(X_0)}} \in \partial^+ B_{i-1} (X_0))\circ \theta_{T_{i-2}}, X_{T_{i-1}}=y]
\right.\\
&\left.\times P_{y,\omega}[ X_{T_{B_i(X_0)}} \in \partial^+ B_i(X_0)]\mathds{1}_{G_{i-1}}\right].
\end{eqnarray*}
By the same kind of estimation as in (\ref{I1in3}), we have
\begin{equation}
\label{Jiin1}
J_i\geq(1- 2^{-N \frac{(m+i-1)}{2}})\left(J_{i-1}-\mathbb P [(G_{i-1})^c] \right).
\end{equation}
We need to get an estimate for $\mathbb P[(G_{i-1})^c]$. We do it repeating
 the argument given in (\ref{G0in}).
Let us first remark that
\begin{equation}
\label{Fi}
|F_{i-1}|\leq \left(\sum_{j=0}^{i-1}2c 2^{(m+j)+1}\right)^{d-1},
\end{equation}
holds. Indeed, the case in which $l'=e_1$ gives the maximum number for $|F_{i-1}|$.
Keeping (\ref{Fi}) in mind we get that
\begin{eqnarray}
\nonumber
&P_{x}[(G_{i-1})^{c}] \leq \sum_{y \in F_{i-1}}\mathbb{P}\left[ P_{y,\omega}\left[X_{T_{B_i(y)}}\not\in\partial^+ B_i (y) \right]\geq 2^{-N\frac{(m+i-1)}{2}}\right]\\
\nonumber
&\leq \sum_{y\in F_{i-1}}P_y[X_{T_{B_i(y)}}\not\in \partial^+ B_i (y) ]2^{N\frac{(m+i-1)}{2}}\\
\label{Giin}
&\leq \left(\sum_{j=0}^{i-1}2c2^{(m+j)+1}\right)^{d-1}2^{-N\frac{(m+i-1)}{2}}.
\end{eqnarray}
Therefore, combining (\ref{Giin}) and (\ref{Jiin1}) we prove claim (\ref{claim}).
Iterating (\ref{claim}) backward, from a given integer $i$, we have got
\begin{equation}
\label{Jiin}
J_i\geq J_1\left[ \prod_{h=1}^{i-1}(1-2^{-N\frac{(m+h)}{2}})\right]-\sum_{j= 1}^{i-1}a_j 2^{-N\frac{m+j}{2}}\prod_{k=j}^{i-1}(1-2^{-N\frac{(m+k)}{2}}),
\end{equation}
where we have used for short
$$
a_j:=\left(\sum_{i=0}^{j}c2^{(m+i)+1}\right)^{d-1}\le (2c)^{d-1}2^{(m+j+2)(d-1)}.
$$
The same argument used to derive (\ref{ione}) can be repeated to
 conclude that
\begin{equation}
\label{J1in}
J_1\geq(1-2^{-N\frac{m}{2}})(1-2^{-N\frac{m}{2}}-(2c 2^{m+1})^{d-1}2^{-N\frac{m}{2}}).
\end{equation}
Replacing the right hand side of (\ref{J1in}) into (\ref{Jiin}), and together
to the fact $I_i\geq J_i$, we see that
\begin{equation}
\label{Ii}
I_i\geq \left[ \prod_{h=0}^{i-1}(1-2^{-N\frac{m+h}{2}})\right](1-2^{-N\frac{m}{2}})-\sum_{j= 0}^{i-1}a_j 2^{-N\frac{(m+j)}{2}}\prod_{k=j}^{i-1}(1-2^{-N\frac{(m+k)}{2}}).
\end{equation}
Now we can finish the proof. First, observe that
$$
P_{x}[D_{l'}=\infty]\geq I_\infty,
$$
where as a matter of definition
$$
I_\infty:=\lim_{i\rightarrow\infty}\, I_i
$$
(this limit exists, because it is the limit of a decreasing sequence of real numbers bounded from below). By the condition  $N> 2(d-1)$, we get that for each $m\ge 1$ one has that for
all $j\ge 1$,

$$
a_j \ 2^{-\frac{M(m+j)}{2}}\leq (8c)^{d-1} 2^{-\vartheta \frac{(m+j)}{2}},
$$
where  $\vartheta$ stands for the positive number so that $N=2(d-1)+\vartheta$. Thus
all the products and series in (\ref{Ii}) converge and we have that
for all $m\ge 1$ and $x\in \{z\in\mathbb Z^d: z\cdot l'\ge 2^m\}$

$$
P_{x}[D_{l'}=\infty]\geq y(m),
$$
where

\begin{eqnarray*}
&y(m):=
\left[ \prod_{h=0}^{\infty}(1-2^{-N\frac{(m+h)}{2}})\right](1-2^{-N\frac{m}{2}})\\
&-\sum_{j= 0}^{\infty}a_j 2^{-N\frac{(m+j)}{2}}\prod_{k=j}^{\infty}(1- 2^{-N\frac{(m+k)}{2}}).
\end{eqnarray*}
Clearly for each $m\ge 1$, $y(m)$ does not depend on the
direction $l'$ and $\lim_{m\to\infty}y(m)=1$, which completes the proof.
\end{proof}

\medskip
With the previous Lemma, we now have  enough tools to prove  Proposition \ref{D'}. Before this, we need a definition of geometrical nature.

We will say that a sequence $(x_0,\ldots, x_n)$ of lattice points is a \textit{path} if
for every $1\le i\le n-1$, one has that $x_i$ and $x_{i-1}$ are nearest neighbors.
Furthermore, we say that this path is \textit{admissible} if for every $1\le i\le n-1$
one has that
$$
(x_i-x_{i-1})\cdot l\ne 0.
$$

\medskip

\begin{proof}[Proof of Proposition \ref{D'}.] Assume $(P)_{M,c}|l$, where $M > 6(d-1)+3$ which
is  the condition of
the statement of the Proposition \ref{D'}.
We appeal to Proposition (\ref{lneighbors}) and assumption $(P)_{M,c}|l$ to choose an $\alpha>0$
such that for all $i\in [2,d]$
$$
(P)_{N,2c}|l_{\pm i}
$$
is satisfied with

\begin{equation}
\label{N}
N:=\frac{M}{3}-1>2(d-1).
\end{equation}
From now on,
let $m$ be any natural number satisfying

\begin{equation}
\label{ym}
y(m)> 1-\frac{1}{2(d-1)},
\end{equation}
where $y(m)$ is the function given in Lemma \ref{lemmaD'}.
Note that there exists a constant $c_3(d)$ such that for all
 $x\in\mathbb Z^d$
 contained in $C(\alpha,l,R(2^me_1))$ and such that  $|R(2^{m}e_1)-x|_1\le 1$
one has that
there exists an admissible path with at most $c_3 2^m$ lattice points joining $0$ and $x$.
We denote this path by

$$
(0,y_1,\ldots,y_n=x)
$$
noting that $n\le c_3 2^m$.

\medskip
The general idea to finish the proof is to push forward the walk up to site $x$
with the help of uniform ellipticity in direction $l$ and then we make use of Lemma (\ref{lemmaD'}) to ensure that the walk remains inside the cone.

Therefore,  by (\ref{N}) and Lemma  (\ref{lemmaD'}) we can conclude that
for all $2\le i \le d$ one has that
\begin{equation}
\label{L1}
P_x [D_{l_{i+}}=\infty] \geq y(m),
\end{equation}
along with
\begin{equation}
\label{L2}
P_x [D_{l_{i-}}=\infty] \geq y(m).
\end{equation}
Define the event that the random walk starting from $0$ following that path
$(0,y_1,\ldots, y_n)$ as

$$
A_n:=\{(X_0,\ldots, X_n)=(0,y_1,\ldots, y_n)\}.
$$
Now notice that
$$
P_0[D'=\infty] \geq
$$
\begin{equation}
\label{ineD}
\begin{split}
P_0\left[A_n,
(D_{l_{i-}}=\infty)\circ \theta_{n}, (\ D_{l_{i+}}=\infty)\circ \theta_{n}\ {\rm for}\
2\le i\le d\right].
\end{split}
\end{equation}
On the other hand, by definition of the annealed law, together with the strong Markov property
we have that
\begin{eqnarray}
\nonumber
&  P_0[A_n, (D_{l_{i-}}=\infty)\circ \theta_{n}, (\ D_{l_{i+}}=\infty)\circ \theta_{n}
\ {\rm for}\ 2\le i\le d]=\\
\label{din}
&  \mathbb{E}\left[P_{0,\omega}[A_n],
P_{x,\omega}[D_{l_{i-}}=\infty , D_{l_{i+}}=\infty\ {\rm for}\ 2\le i\le d]\right].
\end{eqnarray}
Using the uniform ellipticity assumption $(UE)|l$, along with (\ref{L1}) and (\ref{L2}), we can see that (\ref{din}) is bounded from
below by

\begin{eqnarray}
(2\kappa)^{c_3 2^{m}}\left(1-2 (d-1) (1- y(m))\right).
\end{eqnarray}
By virtue of our choice of $m$ in (\ref{ym}), we see that there exists a constant $c_2$ just depending on
the dimension (we recall that $m$ is fixed at this point of the proof), such that

\begin{equation}
\label{const}
c_4:=(2\kappa)^{c_3 2^{m}} \left(1-2(d-1) (1- y(m)) \right)>0
\end{equation}
Finally, in view of the inequalities (\ref{ineD}) and (\ref{din}) it follows that
$$
P_0[D'=\infty]\geq c_4.
$$
\end{proof}

\section{Polynomial control of regeneration positions}
\label{section5}

In this section, we define
an approximate regeneration times as done in \cite{CZ01}, which
will depend on a distance parameter $L>0$.
We will then show that these times, assuming $(PC)_{M,c}|l$
for $M$ large enough, and cone-mixing, when scaled by $\kappa^L$,
define approximate regeneration positions with a finite
second moment.

\subsection{Preliminaries}
We recall the definition of approximate renewal time given in \cite{CZ01}.
Let $W:=\mathcal{E}\cup\{0\}$
[c.f. (\ref{espacio-epsilon})]
 and  endow the space $W^{\mathbb{N}}$ with the canonical $\sigma-$algebra $\mathcal W$ generated
 by the
 cylinder sets.
 For fixed  $\omega\in \Omega$ and $\varepsilon
=(\varepsilon_0,\varepsilon_1,\ldots)\in W^{\mathbb{N}}$, we denote by
$P_{\omega, \varepsilon}$ the law of the Markov chain $\{X_n\}$ on $(\mathbb{Z}^d)^{\mathbb{N}}$, so that $X_0=0$ and with transition probabilities defined for $z\in \mathbb{Z}^d$, $e, |e|=1$ as
$$
P_{\omega,\varepsilon}[X_{n+1}=z+e|X_n=z]=\mathds{1}_{\{\varepsilon_{n}=e\}}+\frac{\mathds{1}_{\{\varepsilon_{n}=0\}}}{1-\kappa|\mathcal E |}[\omega(z,e)-\kappa\mathds{1}_{\{e\in \mathcal{E}\}}].
$$
Call $E_{\omega,\epsilon}$ the corresponding expectation.
Define also the product measure $Q$, which to each sequence of the form $\varepsilon
\in W^{\mathbb{N}}$ assigns the probability
$Q(\varepsilon_1=e):=\kappa$, if $e\in \mathcal{E}$, while $Q(\varepsilon_1=0)=1-\kappa|\mathcal{E}|$, and denote by $E_Q$ the corresponding expectation.

Now let $\mathfrak G$ be the $\sigma$-algebra on $(\mathbb Z^d)^{\mathbb N}$ generated by
cylinder sets, while $\mathfrak F$ be the $\sigma$-algebra on $\Omega$ generated by
cylinder sets. Then, we can define for fixed $\omega$ the measure

$$
\overline{P}_{0,\omega}:= Q \otimes P_{\omega,\varepsilon}
$$
on the space
$(W^{\mathbb{N}} \times (\mathbb{Z}^{d})^{\mathbb{N}},  \mathcal{W} \times \mathfrak{G})$,
and also

$$
\overline{P}_0:=\mathbb{P}\otimes Q \otimes P_{\omega,\varepsilon}
$$
on
$(\Omega \times W^{\mathbb{N}} \times (\mathbb{Z}^{d})^{\mathbb{N}}, \mathfrak{F}\times \mathcal{W} \times \mathfrak{G})$,
denoting by $\bar E_{0,\omega}$ and $\bar E_0$ the corresponding expectations.
A straightforward computation makes us conclude that the law of $\{X_n\}$ under $\bar P_{0,\omega}$ coincides with its law under $P_{0,\omega}$ and that its law under $\overline{P}_0$ coincides with its law under $P_0$.

\medskip

Let $q$ be a positive real number such that for all $1\le i\le d$,

$$
u_i:=l_i q
$$
 is an integer.
Define now the vector
$u:=(u_1,\ldots, u_d)$.
From now on, we fix a particular sequence $\overline{\varepsilon}$ in $\mathcal{E}$ of length $p:=|u|_1$ whose
components sum up to $u$:

$$
\overline{\varepsilon}:=(\overline{\varepsilon}_1,\ldots,\overline{\varepsilon}_{p}),
$$
together with
\begin{eqnarray*}
&\overline{\varepsilon}_1=\overline{\varepsilon}_2=\ldots=\overline{\varepsilon}_{|u_1|}:= \ \mbox{sgn}(u_1)e_1,\\
&\overline{\varepsilon}_{|u_1|+1}=\overline{\varepsilon}_{|u_1|+2}=\ldots=\overline{\varepsilon}_{|u_1|+|u_2|}:=\ \mbox{sgn}(u_2)e_2\\
&\vdots \\
&\overline{\varepsilon}_{p-|u_d|+1}=\ldots=\overline{\varepsilon}_{p}:=\ \mbox{sgn}(u_d)e_d.
\end{eqnarray*}
Without loss of generality we can assume  that $l_1 \not=0$.  And by taking $\alpha$ small enough that
$$
\overline{\varepsilon}_1,\overline{\varepsilon}_1+\overline{\varepsilon}_2,\ldots\overline{\varepsilon}_{p}
$$
are inside of $\mathcal{C}(0,l,\alpha)$.
For $L\in p\mathbb N$ consider the sequence $\bar\varepsilon^{(L)}$ of length $L$,
defined as the concatenation $L/p$ times with itself of the sequence $\bar\varepsilon$,
so that

$$
\overline{\varepsilon}^{(L)}=(\bar\varepsilon_1,\ldots,\bar\varepsilon_p,\ldots,
\bar\varepsilon_1,\ldots,\bar\varepsilon_p).
$$

\noindent Consider the filtration $\mathcal G:=\{\mathcal G_n:n\ge 0\}$ where

$$
\mathcal{G}_n:=\sigma((\varepsilon_i,X_i),i\leq n).
$$
Define $S_0:=0$,

\begin{eqnarray*}
&S_1:=
\inf\{n \geq L: \ X_{n-L}\cdot l>\max\{X_m \cdot l:m<n-L\},\\
& (\varepsilon_{n-L},\ldots,\varepsilon_{n-1})=\overline{\varepsilon}^{(L)}\}
\end{eqnarray*}
together with
$$
R_1:=D'\circ\theta_{S_1}+S_1.
$$
We can now recursively define for $k\ge 1$,

\begin{eqnarray*}
& S_{k+1}:=
\inf\{n \geq R_k : \ X_{n-L}\cdot l>\max\{X_m \cdot l:m<n-L\},\\
& (\varepsilon_{n-L},\ldots,\varepsilon_{n-1}=\overline{\varepsilon}^{(L)}))\}
\end{eqnarray*}
and
$$
R_{k+1}:=D'\circ\theta_{{S}_{k+1}}+S_{k+1}.
$$
Clearly,
$$
0=S_0\leq S_1\leq R_1\leq\cdots\infty,
$$
the inequalities are strict if the left member of the corresponding inequality is finite, and
the sequences $\{S_k:k\ge 0\}$ and $\{R_k:k\ge 0\}$ are $\mathcal G$-stopping times. On the other hand, we can check that $\bar P_0-$a.s. one has that $S_1<\infty$ along with the fact $\bar P_0-$a.s. on the set
\begin{align}
\label{remark}
\{\lim X_n\cdot l=\infty\}\cap\{R_{k}<\infty\}\ \mbox{one has too that}&\\
\nonumber
S_{k+1}<\infty.
\nonumber
\end{align}
Put
$$
K:=\inf\{k\geq1:{S}_k<\infty, {R}_{k}=\infty\}
$$
and define the \textit{approximate regeneration time}

\begin{equation}
\label{taudef}
\tau^{(L)}:={S}_{K}.
\end{equation}
We see that the random variable $\tau^{(L)}$ is the first time $n$ in which the walk has reached a record at time $n-L$ in direction $l$, and then  the walk goes on $L$ steps in the direction $l$ by means of the action of $\overline{\varepsilon}^{(L)}$ and finally after this time $n$, never exits the cone $C(X_n,l,\alpha)$.

 The following lemma is required to show that the approximate renewal times
are $\bar P_0$-a.s. finite. Its can be proved using a slight
variation of the argument given in page 517 of Sznitman \cite{Sz03}.

\medskip

\begin{lemma}
\label{transience1}
 Consider a random walk in a random environment. Let $l\in\mathbb{S^*}^{d-1}$,
$M\ge d+1$ and $c>0$ and
assume that $(PC)_{M,c}|l$ is satisfied. Then the random walk is transient in direction $l$.
\end{lemma}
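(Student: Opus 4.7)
The plan is to adapt the argument on page 517 of Sznitman \cite{Sz03}, in which directional transience is derived from polynomial control of the non-positive-exit event. The structural feature of $(PC)_{M,c}|l$ that makes the adaptation work in our mixing (as opposed to i.i.d.) setting is precisely the supremum in (\ref{Pl}) over the coordinates $\{\omega(x): x\cdot l\le y\cdot l\}$: it plays the role of the annealed-independence uniformity Sznitman exploits.

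The concrete strategy is as follows. For a large scale $L$, I would define the successive hitting times $\tau_k := \inf\{n\ge 0: X_n\cdot l\ge kL\}$. On $\{\tau_k<\infty\}$, $X_{\tau_k}$ is measurable with respect to the environment on $H_{X_{\tau_k},l}$, and the strong Markov property combined with $(PC)_{M,c}|l$ (applied with $y=X_{\tau_k}$ after translation) would give a bound of the form
\[
P_0\!\left[\tau_k<\infty,\ X_{T_{B_{L,cL,l}(X_{\tau_k})}}\notin\partial^+ B_{L,cL,l}(X_{\tau_k})\ \text{before entering } H_{X_{\tau_k},l}\ \big|\ \mathcal G_{\tau_k}\right]\le L^{-M},
\]
valid uniformly in the past trajectory because of the supremum in (\ref{Pl}). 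A union bound over the $O(L^{d-1})$ lateral positions that $X_{\tau_k}$ may take on the hyperplane $\{z\cdot l=kL\}$ would then produce a failure probability $O(L^{d-1-M})$, which is summable along a geometrically growing family of scales under the assumption $M\ge d+1$. A Borel--Cantelli argument yields that only finitely many slab-crossings fail, and on this full-measure event the walk progresses past every level $kL$; combined with uniform ellipticity to handle forward moves within a slab, this gives $X_n\cdot l\to\infty$.

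The main obstacle I anticipate is technical rather than conceptual: one must carefully absorb the random positions $X_{\tau_k}$ into the deterministic-$y$ formulation of $(PC)_{M,c}|l$, and control the lateral displacement of the walk when chaining applications across scales. These are exactly the bookkeeping points that Sznitman handles in the i.i.d. case on page 517 of \cite{Sz03}; here the sup-formulation of the polynomial condition substitutes for independence, which is why the authors have taken care to state $(PC)_{M,c}|l$ in its non-effective conditional form.
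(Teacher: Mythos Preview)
Your approach is the Sznitman Borel--Cantelli argument the paper invokes, and your observation that the supremum in (\ref{Pl}) substitutes for i.i.d.\ independence is exactly the point. Two details, however, need correction.

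First, the ``union bound over the $O(L^{d-1})$ lateral positions that $X_{\tau_k}$ may take'' is not well-posed: $X_{\tau_k}$ can land anywhere on the hyperplane $\{z\cdot l\approx kL\}$, an infinite set. Since the bound delivered by $(PC)_{M,c}|l$ is already uniform over the past environment and trajectory (that is precisely the role of the supremum you highlighted), no union bound over landing positions is needed at that step. The exponent loss that accounts for the threshold $M\ge d+1$ enters elsewhere in Sznitman's scheme, not by enumerating possible values of $X_{\tau_k}$.

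Second, the paper singles out as the essential preliminary input the fact
\[
P_0\!\left[\limsup_{n\to\infty} X_n\cdot l=\infty\right]=1,
\]
which is what guarantees that each level is eventually reached, so that the Borel--Cantelli chaining can begin. You replace this by an appeal to uniform ellipticity, but $(UE)|l$ is not among the hypotheses of the lemma: only $(PC)_{M,c}|l$ is assumed. The paper's proof indicates that the $\limsup$ statement is to be extracted from $(PC)_{M,c}|l$ itself (indeed for any $M>0$), and this is the ingredient you should supply in place of uniform ellipticity.
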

\begin{proof}  The proof can be obtained following for example the
argument presented in page 517 of \cite{Sz03}, through the use
of Borel-Cantelli and the fact that for any $M>0$ we have that
$$
P_0[\limsup_{n\rightarrow\infty}X_n\cdot l=\infty]=1.
$$
\end{proof}
\medskip

We can now prove the following stronger version of Lemma 2.2 of \cite{CZ01}.

\begin{lemma}
Assume $(CM)_{\alpha,\phi}|l$, $(UE)|l$ and $(PC)_{M,c}$ for $M>6d-3, c>0$. Then there exists a positive $L_0\in |u|_1\mathbb{N}$,  such that

$$
\phi(L_0)+P_0[D'<\infty]<1,
$$
and $\tau^{(L)}<\infty$, $P_0$-a.s. are fulfilled for each  $L \geq L_0, \ L\in|u|_1 \mathbb{N}$.
\end{lemma}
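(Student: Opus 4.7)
The plan is to treat the two assertions of the lemma separately. For the inequality $\phi(L_0)+P_0[D'<\infty]<1$, I would invoke Proposition~\ref{D'}: condition $(PC)_{M,c}|l$ trivially implies $(P)_{M,c}|l$, and the hypothesis $M>6d-3$ meets the regime required by that proposition, so there is a constant $c_2(d)>0$ with $P_0[D'=\infty]\ge c_2(d)$ and hence $P_0[D'<\infty]\le 1-c_2(d)$. Replacing $\phi$ by its non-increasing majorant, the hypothesis $\phi(r)\to 0$ lets one pick $L_0\in|u|_1\mathbb N$ large enough that $\phi(L_0)<c_2(d)$; then $\phi(L)+P_0[D'<\infty]<1$ for every $L\ge L_0$ in $|u|_1\mathbb N$.

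For almost-sure finiteness of $\tau^{(L)}$, Lemma~\ref{transience1} first yields $X_n\cdot l\to\infty$ almost surely, so by~\eqref{remark} the implication $\{R_k<\infty\}\Rightarrow\{S_{k+1}<\infty\}$ holds $\bar P_0$-a.s.; a $Q$-coin-tossing argument embedded in the definition of $S_1$ also shows $S_1<\infty$ a.s. Hence it is enough to prove $\bar P_0\bigl[\bigcap_k\{R_k<\infty\}\bigr]=0$, for which the core estimate is
\[
\bar P_0\bigl[R_{k+1}<\infty\bigm|\mathcal G_{S_{k+1}}\bigr]\,\mathds{1}_{\{S_{k+1}<\infty\}}\le\bigl(\phi(L)+P_0[D'<\infty]\bigr)\,\mathds{1}_{\{S_{k+1}<\infty\}}.
\]
By construction the trajectory up to $S_{k+1}$ lies in the half-space $\{z\cdot l\le X_{S_{k+1}}\cdot l-L/|l|_1\}$, because the $\overline\varepsilon^{(L)}$ segment between $S_{k+1}-L$ and $S_{k+1}$ produces exactly this gap and the choice of $\alpha$ keeps that segment in $C(X_{S_{k+1}-L},l,\alpha)$. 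On the other hand $\{D'\circ\theta_{S_{k+1}}<\infty\}$ is $\sigma\{\omega(z):z\in C(X_{S_{k+1}},l,\alpha)\}$-measurable. Conditioning on the finite data $(X_j,\varepsilon_j)_{j\le S_{k+1}}$ and on $X_{S_{k+1}}=z$, translation invariance of $\mathbb P$ and the cone-mixing hypothesis $(CM)_{\alpha,\phi}|l$ applied with separation $r=L/|l|_1$ (producing additive error $\phi(r|l|_1)=\phi(L)$) yield the displayed inequality. Iterating gives
\[
\bar P_0[R_1<\infty,\dots,R_k<\infty]\le\bigl(\phi(L)+P_0[D'<\infty]\bigr)^k\xrightarrow[k\to\infty]{}0
\]
for every $L\ge L_0$, from which $K<\infty$ and $\tau^{(L)}=S_K<\infty$ follow a.s.

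The main technical obstacle I foresee is the rigorous application of cone-mixing at the random stopping time $S_{k+1}$: the hypothesis concerns deterministic pairs of events on fixed half-spaces and shifted cones, whereas here both the apex $X_{S_{k+1}}$ and the past history are random. The natural remedy is to decompose according to the finite data $(X_j,\varepsilon_j)_{j\le S_{k+1}}$ and the value of $X_{S_{k+1}}=z$, but one must then carefully verify that all revealed environment sites (notably those visited during the forced $\overline\varepsilon^{(L)}$-segment) actually lie on the correct side of the separating hyperplane; this is precisely where the smallness constraint $\alpha\le\min\{1/9,1/(2c+1)\}$ is essential, ensuring that the forced segment is contained in the appropriate cone and thus does not spoil the half-space/cone decoupling required by $(CM)_{\alpha,\phi}|l$.
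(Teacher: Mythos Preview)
Your proposal is correct and follows essentially the same route as the paper: the paper also reduces everything to the estimate $\bar P_0[R_k<\infty]\le(\phi(L)+P_0[D'<\infty])^k$ (citing Lemma~2.2 of \cite{CZ01} for this), then invokes Proposition~\ref{D'} for $P_0[D'<\infty]<1$, the decay of $\phi$ to choose $L_0$, Borel--Cantelli, and finally Lemma~\ref{transience1} together with \eqref{remark} to conclude $K<\infty$.

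One small correction to your discussion of the technical obstacle: during the forced $\overline\varepsilon^{(L)}$-segment the transitions are deterministic given $\varepsilon$ (each $\varepsilon_j\in\mathcal E$ is nonzero, so $P_{\omega,\varepsilon}[X_{j+1}=X_j+\varepsilon_j]=1$ independently of $\omega$), hence the environment at the sites visited in that segment is \emph{not} revealed at all. The half-space containing the relevant past $\omega$-coordinates is therefore $\{z\cdot l\le X_{S_{k+1}-L}\cdot l\}$, which is automatically separated from the cone $C(X_{S_{k+1}},l,\alpha)$ by the $L$ forced steps; no smallness of $\alpha$ is needed for this particular decoupling. The constraints $\alpha\le\min\{1/9,1/(2c+1)\}$ in the paper come instead from Proposition~\ref{lneighbors} and Lemma~\ref{M}.
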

\begin{proof} Following the arguments in the proof of Lemma 2.2. of \cite{CZ01} (using $u$ instead of $l$), one has that:
\begin{equation}
\bar P_0[R_k<\infty]\leq (\phi(L_0)+P_0[D'<\infty])^k
\end{equation}
From the assumption $(CM)_{\alpha,\phi}|l$, we have $\phi(L)\rightarrow 0$ as $L\rightarrow\infty$. On the other hand, by Lemma
\ref{D'},
$$
P_0[D'<\infty]<1.
$$
Therefore, we can find a $L_0$ with the property:
$$
\phi(L)+P_0[D'<\infty]<1,
$$
for all $L\geq L_0, \  L\in \mathbb{N}|l|_1$.\\
Then, via  Borel-Cantelli Lemma, one has that
$\bar P_0-$ almost surely
\begin{equation}
\inf\{n\geq1 : \ R_n=\infty\}<\infty,
\end{equation}
holds. Now, observe that $\bar P_0-$ almost surely:
\begin{equation}
\inf\{n\geq1 : \ R_n=\infty\}=\inf\{n\geq1 : R_{n-1}<\infty \ R_n=\infty\}
\end{equation}
In turn, using (\ref{remark}) which is satisfied in view of Lemma \ref{transience1}, turns out that
$$
\inf\{n\geq1 : S_n<\infty \ R_n=\infty\}=K<\infty
$$
$\bar P_0-$ almost surely.
\end{proof}

\medskip

\noindent Finally, we can state the following  proposition, which gives a control
on the second moment of the position of the random walk at the first regeneration position.
Define for $x\in \mathbb{Z}^d$ and $L>0$ the $\sigma-$algebra
$$
\mathfrak{F}_{x,L}:=\sigma\left\{\omega(y,\cdot); \ y\cdot u\leq x\cdot u-\frac{L}{|u|_1}|u|_2\right\}.
$$
\medskip

\begin{proposition}
\label{prop-sec5} Fix $l\in\mathbb{S^*}^{d-1}$, $\alpha>0$, $M>0$
and $\phi:[0,\infty)\to [0,\infty)$ such that $\lim_{r\to\infty}\phi(r)=0$.
Assume that $0<\alpha<\min\{\frac{1}{9},\frac{1}{2c+1}\}$
and that $(CM)_{\alpha,\phi}|l$, $(UE)|l$ and $(PC)_{M,c}|_l$ hold.
Then, there exists a constant $c_{5}$, such that

\begin{equation}
\label{eq-prop5}
\bar E_0[(\kappa^L X_{\tau^{(L)}}\cdot l)^2 | \mathfrak{F}_{0,L}]\leq c_{5}.
\end{equation}

\end{proposition}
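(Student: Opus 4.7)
The plan is to exploit the attempt-decomposition $\tau^{(L)}=S_K$ together with the geometric tail on $K$ inherited from the preceding lemma, and to show that each attempt contributes at most a geometric variable with parameter of order $\kappa^L$ to $X_{\tau^{(L)}}\cdot l$. Once this is done, the prefactor $\kappa^L$ cancels exactly the $\kappa^{-L}$ scale built into the construction, leaving the second moment bounded by a constant. The cone-mixing condition $(CM)_{\alpha,\phi}|l$ is then used to pass from the unconditional statement to the conditional one given $\mathfrak{F}_{0,L}$.

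Concretely, I would first recall from the preceding lemma that for $L\geq L_0$ one has $\bar{P}_0[K>k]\leq\rho^k$ with $\rho:=\phi(L_0)+P_0[D'<\infty]<1$, independently of $L$. Since any new $l$-record increments the running maximum $\max_{m\leq n}X_m\cdot l$ by at most $\max_i|l_i|\leq 1$, and each $\bar{\varepsilon}^{(L)}$-push adds $L/|l|_1$ in the $l$-direction, one obtains the deterministic bound
\begin{equation*}
X_{\tau^{(L)}}\cdot l \;\leq\; \#\{\text{records set up to time }\tau^{(L)}-L\}+L/|l|_1.
\end{equation*}
Because $\varepsilon$ is i.i.d.\ under $Q$ and independent of $\omega$, at each new-record time $n$ the event $(\varepsilon_n,\dots,\varepsilon_{n+L-1})=\bar{\varepsilon}^{(L)}$ has probability $\kappa^L$, so the number of records the walk produces before the next successful match is stochastically dominated by a geometric variable with parameter of order $\kappa^L$. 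Combining this with Proposition~\ref{D'} (each match converts to an actual regeneration with probability at least $c_2(d)>0$, uniformly in $L$) and with the geometric tail of $K$ via a Cauchy--Schwarz argument yields $\bar{E}_0[(X_{\tau^{(L)}}\cdot l)^2]=O(\kappa^{-2L})$, the desired unconditional bound.

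To upgrade this to the conditional statement given $\mathfrak{F}_{0,L}$, I would invoke $(CM)_{\alpha,\phi}|l$: the $\sigma$-algebra $\mathfrak{F}_{0,L}$ is generated by the environment in the half-space $\{y\cdot u\leq -L|u|_2/|u|_1\}$, whereas all events driving the construction of $\tau^{(L)}$ live, up to $\phi(L)$-small error, in a forward cone with respect to $l$, thanks to the $L/|l|_1$-push supplied by $\bar{\varepsilon}^{(L)}$ and to the cone-avoidance $D'=\infty$ characterising the last attempt. The constraint $\alpha<1/(2c+1)$ is precisely what keeps the box-based events from $(PC)_{M,c}|l$ sitting inside the mixing cone. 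The main technical obstacle is the $\varepsilon$-match count: since records can be less than $L$ time steps apart, the match events at consecutive records have overlapping $\varepsilon$-windows and are not independent. I would circumvent this by subsampling the record sequence at spacing $L$ (losing only a polynomial-in-$L$ factor that is harmlessly absorbed by the exponential gain $\kappa^L$), or by exploiting the period-$|u|_1$ structure of $\bar{\varepsilon}^{(L)}$ to bound joint non-match probabilities directly. An equally delicate point is ensuring that the backward excursions of failed attempts, measured by $D'$, do not push the walk significantly past the hyperplane supporting $\mathfrak{F}_{0,L}$; this is handled by combining uniform ellipticity in $\mathcal{E}$-directions with Proposition~\ref{D'} and cone-mixing, so that conditioning on $\mathfrak{F}_{0,L}$ shifts the relevant probabilities by at most a $\phi(L)$-summable amount.
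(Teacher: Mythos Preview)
Your outline captures the coarse structure — $X_{\tau^{(L)}}\cdot l$ is indeed controlled by counting $l$-records, the $\varepsilon$-match contributes a geometric factor of order $\kappa^L$, and the number $K$ of attempts has a geometric tail — but two essential ingredients are missing, and the second cannot be patched by the argument you sketch.

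The minor gap: between $S_k$ and $R_k$ (i.e.\ during a failed attempt) the walk may set new records inside the cone before exiting; these records are \emph{not} followed by an $\varepsilon$-coin-flip in the construction of $S_{k+1}$, so they do not fit your geometric-$\kappa^L$ bookkeeping. Their total contribution is the overshoot $\mathfrak{M}=\sup_{0\le n\le D'}(X_n-X_0)\cdot u$, and the paper devotes Lemma~\ref{M} to showing $E_0[\mathfrak{M}^2,\,D'<\infty\mid\mathfrak{F}_{0,L}]\le c_6$. This, not cone geometry, is where the full strength of $(PC)_{M,c}|l$ (as opposed to the unconditional $(P)_{M,c}|l$) and the constraint $\alpha<1/(2c+1)$ are actually used.

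The real gap is your two-stage plan ``prove unconditionally, then upgrade via cone-mixing''. The random variable $X_{\tau^{(L)}}$ is \emph{not} a functional of the environment in a forward cone: each failed attempt produces an excursion out of $C(X_{S_k},l,\alpha)$, and nothing prevents such excursions from reaching the half-space supporting $\mathfrak{F}_{0,L}$. Cone-mixing only decouples events that genuinely live in the forward cone, so it cannot by itself transfer an annealed second-moment bound to one conditional on $\mathfrak{F}_{0,L}$. The paper avoids this by working conditionally on $\mathfrak{F}_{0,L}$ from the first line (Step~0 of the proof) and feeding in two \emph{conditional} inputs at every step of the recursion: Lemma~\ref{lemadinf} gives $|\mathbb{E}[P_{x,\omega}[D'=\infty]\mid\mathfrak{F}_{x,L}]-P_0[D'=\infty]|\le\phi(L)$ (this is the sole place cone-mixing enters), while Lemma~\ref{M} supplies the conditional control on $\mathfrak{M}$ noted above. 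The factorisation in Step~1 then peels off a factor $b^{k-k'-1}$ with $b=P_0[D'<\infty]+\phi(L)$ from the \emph{conditional} expectation — this is the conditional analogue of your geometric tail on $K$, and it fails without Lemma~\ref{lemadinf}.

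In short: the paper does not do ``unconditional, then condition''; it does ``conditional throughout'', and can do so only because $(PC)_{M,c}|l$ is by definition uniform over the environment in the back half-space. Your proposal invokes $(PC)_{M,c}|l$ only to fit boxes inside the mixing cone, which is not its role here.
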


\medskip

\subsection{Preparatory results}
Now we are in position to prove the main proposition of this section. Before we do this,
 we will prove a couple of lemmas.

\medskip

\begin{lemma}
\label{lemadinf}
Assume that $(CM)_{\alpha,\phi}|l$ holds. Then, for each $x \in \mathbb{Z}^d$ one has that
$$\left|\mathbb{E}[P_{x,\omega}[D'=\infty]| \mathfrak{F}_{x,L}]- P_0[D'=\infty]\right|\leq \phi(L)$$
holds a.s.
\end{lemma}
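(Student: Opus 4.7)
The plan is to recognize $P_{x,\omega}[D'=\infty]$ as a $[0,1]$-valued bounded random variable whose measurability is confined to the cone $C(x,l,\alpha)$, and then to estimate its covariance against events in $\mathfrak{F}_{x,L}$ through the cone-mixing hypothesis after a suitable shift of the environment.

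First I would observe that $D'$ is the first exit time of the walk from the cone based at its starting point, hence $P_{x,\omega}[D'=\infty]$ depends on $\omega$ only through the coordinates $\omega(y,\cdot)$ with $y\in C(x,l,\alpha)\cap\mathbb{Z}^d$. Combining this with the translation identity $P_{x,\omega}[D'=\infty]=P_{0,\theta_x\omega}[D'=\infty]$ and the stationarity of $\mathbb{P}$, one immediately has $\mathbb{E}[P_{x,\omega}[D'=\infty]]=P_0[D'=\infty]$. Thus the lemma is equivalent to showing the covariance estimate
\[
\bigl|\mathbb{E}[P_{x,\omega}[D'=\infty]\mathbf{1}_A]-P_0[D'=\infty]\,\mathbb{P}[A]\bigr|\leq\phi(L)\,\mathbb{P}[A]
\]
for every $A\in\mathfrak{F}_{x,L}$ with $\mathbb{P}(A)>0$.

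For this I would invoke stationarity twice. A first translation by $-x$ carries the cone vertex to the origin, so that the cone event becomes measurable with respect to $\sigma\{\omega(z,\cdot):z\in C(0,l,\alpha)\}$, while $A$ becomes measurable with respect to $\sigma\{\omega(z,\cdot):z\cdot u\leq -L|u|_2/|u|_1\}$. A second translation by $+rl$, with $r=L|u|_2/(q|u|_1)$ tuned so that the shifted past event ends up exactly inside the half space $\{z\cdot l\leq 0\}$, moves the cone event into $\sigma\{\omega(z,\cdot):z\in C(rl,l,\alpha)\}$, placing us precisely in the framework of the cone-mixing property.

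Since cone mixing is stated for indicators while $P_{x,\omega}[D'=\infty]$ is a bounded function, the last step is to use the layer-cake identity $\mathbb{E}[F\mathbf{1}_A]=\int_0^1\mathbb{P}[\{F>t\}\cap A]\,dt$ and apply cone mixing separately to each level set $\{P_{x,\omega}[D'=\infty]>t\}$, then integrate in $t\in[0,1]$ to obtain the covariance bound with right-hand side $\phi(r|l|_1)\,\mathbb{P}[A]$. The only delicate point, and the main thing to check, is the bookkeeping in the two shifts so that the cone-mixing decay parameter $r|l|_1$ matches the argument $L$ (the constant factor $1/q$ being absorbed into the function $\phi$); the measure-theoretic extension from indicator events to the bounded function $P_{x,\omega}[D'=\infty]$ is otherwise routine.
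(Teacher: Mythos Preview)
Your proposal is correct and follows essentially the same route as the paper: reduce the conditional-expectation statement to a covariance bound of the form
\[
\bigl|\mathbb{E}[P_{x,\omega}[D'=\infty]\mathbf{1}_A]-P_0[D'=\infty]\,\mathbb{P}[A]\bigr|\leq\phi(L)\,\mathbb{P}[A],\qquad A\in\mathfrak{F}_{x,L},
\]
and then deduce that bound from cone mixing after observing that $P_{x,\omega}[D'=\infty]$ is measurable with respect to the cone $\sigma$-algebra at $x$.

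Two minor presentational differences are worth noting. First, the paper packages the passage from the covariance inequality to the a.s.\ conditional-expectation bound by introducing the signed quantity $\mu(A)=(P_0[D'=\infty]+\phi(L))\mathbb{P}[A]-\mathbb{E}[P_{x,\omega}[D'=\infty]\mathbf{1}_A]$, checking it is a nonnegative measure, and testing it against the level sets $A_n=\{\mathbb{E}[P_{x,\omega}[D'=\infty]\mid\mathfrak{F}_{x,L}]>P_0[D'=\infty]+\phi(L)+1/n\}$; you simply invoke the standard equivalence directly, which is fine. Second, you are more explicit than the paper on the point that $(CM)_{\alpha,\phi}|l$ is stated for \emph{events}, not bounded functions: your layer-cake argument $\mathbb{E}[F\mathbf{1}_A]=\int_0^1\mathbb{P}(\{F>t\}\cap A)\,dt$ applied to each level set $\{P_{x,\omega}[D'=\infty]>t\}$ is exactly what is needed to justify the step the paper writes as ``by assumption $(CM)_{\alpha,\phi}|l$ one has $\nu[A]\le (P_0[D'=\infty]+\phi(L))\mathbb{P}[A]$''. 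Your caveat about the bookkeeping of the shift (and a possible harmless constant in the argument of $\phi$) is appropriate; in the paper's normalization one has $L|u|_2/|u|_1=L/|l|_1$, and the matching with $\phi(r|l|_1)$ works out.
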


\begin{proof}
For each
$A\in \mathfrak{F}_{x,L}$, we  define

\begin{equation}
\label{nua}
\nu[A]:=\mathbb{E}[P_{x,\omega}[D'=\infty]\mathds{1}_{A}]
\end{equation}
and

\begin{equation}
\label{mua}
\mu[A]:= \left(P_0[D'=\infty]+\phi(L)\right)\mathbb{P}[A]-\nu[A].
\end{equation}
Clearly (\ref{nua}) defines a measure on $(\Omega,\mathfrak F_{x,L})$.
We will show that (\ref{mua}) also.
Indeed, take an $A\in \mathfrak{F}_{x,L}$  and note that $P_{x,\omega}[D=\infty]$ is
$\sigma\{ \omega(y,\cdot), \, y\in C(x, l , \alpha)\} $-measurable. Therefore, by
assumption $(CM)_{\alpha,\phi}|l$ one has that

$$
\nu[A]\leq P_{0}[D'=\infty]\mathbb{P}[A]+\phi(L)\mathbb{P}[A].
$$
Consequently, (\ref{mua}) defines a measure $\mu$ on $(\Omega,\mathfrak{F}_{x,L})$.
Consider the  increasing sequence $\{A_n: n\geq1\}$ of $ \mathfrak{F}_{x,L}$-measurable sets  defined by

$$
A_n:=\left\{\omega \in \Omega:
 \mathbb{E}[P_{x,\omega}[D'=\infty]| \mathfrak{F}_{x,L}]> P_{0}[D'=\infty]+\phi(L)+\frac{1}{n}\right\}
$$
and define
$$
A:=\bigcup_{n\geq1}A_n.
$$
Observe that for each $n\geq 1$ we have that

\begin{eqnarray*}
&0\leq \mu(A_n)= (P_0[D=\infty]+\phi(L))
\mathbb P[A_n]-\mathbb{E}[\mathbb{E}[P_{x,\omega}[D'=\infty]| \mathfrak{F}_{x,L}]\mathds{1}_{A_n}]\\
&\leq -\frac{1}{n}\mathbb{P}[A_n].
\end{eqnarray*}
Therefore, one has that for each $n\geq1$, $\mathbb{P}[A_n]=0$
and consequently $\mathbb{P}[A]=0$.
Observing that
$$
A=\{\omega\in \Omega: \mathbb{E}[P_{x,\omega}[D'=\infty]|\mathfrak{F}_{x,L}]> P_{0}[D'=\infty]+\phi(L) \},
$$
we see that

\begin{equation}
\label{show}
\mathbb{E}[P_{x,\omega}[D'=\infty]| \mathfrak{F}_{x,L}]- P_0[D'=\infty]\leq \phi(L).
\end{equation}
One can prove that

$$-\phi(L)\le \mathbb{E}[P_{x,\omega}[D'=\infty]| \mathfrak{F}_{x,L}]- P_0[D'=\infty]$$
following the same argument used to show (\ref{show}), but changing the
event $\{D'=\infty\}$ by $\{D'<\infty\}$.

\end{proof}

\medskip

\noindent The second lemma that will be needed to prove Proposition \ref{prop-sec5} is the following one.
To state it define

$$
\mathfrak{M}:=\sup_{0\leq n \leq D'}(X_n-X_0)\cdot u,
$$
\begin{equation}
\nonumber
D'(0):=\inf\{n\geq0: \  X_n\not\in C(0,l,\alpha)\},
\end{equation}
and for $a\in \mathbb{R}$
\begin{eqnarray}
\nonumber
&T^l_a:=\inf\{n\geq0: \  X_n\cdot l\geq a\}\qquad {\rm and}\\
\label{tmayor}
&\bar T^l_a:=\inf\{n\geq0: \  X_n\cdot l> a\}.
\end{eqnarray}

\medskip

\begin{lemma}
\label{M}
Let $M>4d+1$
and

\begin{equation}
\label{restriction}
2c+1\leq\frac{1}{\alpha}.
\end{equation}
Assume that $(PC)_{M,c}|l$ is satisfied.
Then, there exists $c_6=c_6(d)>0$ such that a.s.
one has that

$$
E_0[\mathfrak{M}^2, D'< \infty |  \mathfrak{F}_{0,L}]\leq c_6.
$$
$\mathbb{P}-$ almost surely.
\end{lemma}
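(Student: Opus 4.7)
The overall strategy is to reduce the bound on $E_0[\mathfrak{M}^2, D' < \infty \mid \mathfrak{F}_{0, L}]$ to a polynomial tail estimate via the identity
\begin{equation*}
E_0[\mathfrak{M}^2, D' < \infty \mid \mathfrak{F}_{0, L}] = \sum_{k \ge 1}(2k - 1)\, P_0[\mathfrak{M} \ge k, D' < \infty \mid \mathfrak{F}_{0, L}],
\end{equation*}
which is valid since $\mathfrak{M}$ is a nonnegative integer on $\{D' < \infty\}$. It will therefore suffice to prove $P_0[\mathfrak{M} \ge k, D' < \infty \mid \mathfrak{F}_{0, L}] \le C k^{-M}$ uniformly in the conditioning, for a constant $C = C(d, c, \alpha, M)$. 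The hypothesis $M > 4d + 1 > 2$ then makes $\sum_{k}k\cdot k^{-M}$ convergent and produces the desired constant $c_6$.

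Fix $k \ge 1$ and set $r_k := \lceil k/|u|_2 \rceil$; using $u = |u|_2\, l$, the event $\{\mathfrak{M} \ge k\}$ forces $T^l_{r_k} \le D'$. I split according to how the walk exits the box $B_{r_k, c r_k, l}(0)$. An exit through a non-positive face has conditional probability at most $C r_k^{-M}$, uniformly in $\mathfrak{F}_{0, L}$, directly from the conditional polynomial condition $(PC)_{M, c}|l$ applied at scale $r_k$. On the complementary event the walk arrives at a point $y = X_{T_B} \in \partial^+ B_{r_k, c r_k, l}(0)$ with $y \cdot l \ge r_k$ and $|y \cdot R(e_i)| \le c r_k$ for every $i \in [2, d]$, and $\mathfrak{M} \ge k$ is automatic.

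To bound, on this second event, the probability that the walk subsequently exits $C(0, l, \alpha)$, I use a box-stacking argument. Set $y_0 := y$, $A_0 := 1$, and inductively define $L_{j+1} := A_j r_k / 3$, $A_{j+1} := (4/3)A_j$, along with the box $B_{L_{j+1}, c L_{j+1}, l}(y_j)$ and its (positive-face) exit point $y_{j+1}$. The constraint $\alpha \le 1/(2c+1)$, which gives $\alpha c \le (1 - \alpha)/2 < 1/2$, yields the geometric inequality
\begin{equation*}
L_{j+1}(1 + \alpha c) \le A_j r_k\,(1 - \alpha c),
\end{equation*}
and this guarantees that any trajectory remaining inside the $(j+1)$-st box stays inside $C(0, l, \alpha)$. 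Hence if the walk exits every stacked box through its positive face, it drifts forward along $l$ without ever leaving the cone, contradicting $D' < \infty$; so $D' < \infty$ forces a non-positive exit at some stage. By the conditional polynomial condition, the probability of such a failure at the $(j+1)$-st step is at most $L_{j+1}^{-M}$, and summing
\begin{equation*}
\sum_{j \ge 0} L_{j+1}^{-M} = 3^M r_k^{-M} \sum_{j \ge 0}(4/3)^{-jM} = C(M)\, r_k^{-M}
\end{equation*}
controls the total contribution of the second case by $C r_k^{-M}$.

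Combining both contributions gives $P_0[\mathfrak{M} \ge k, D' < \infty \mid \mathfrak{F}_{0, L}] \le C'\, k^{-M}$, and summation then yields the uniform bound $c_6$. The main technical obstacle will be propagating the conditioning on $\mathfrak{F}_{0, L}$ through the strong Markov property at each exit time: each application of $(PC)_{M, c}|l$ is to the walk restarted at the random position $y_j$, and one must verify that the stacked boxes lie strictly forward of the half-space $\{z\cdot l \le -L/|u|_1\}$ that generates $\mathfrak{F}_{0, L}$, so that by stationarity the constants in the conditional polynomial estimate are uniform in the conditioning and in the step index $j$.
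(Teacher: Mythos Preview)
Your overall architecture---tail decomposition, split by the face through which the walk exits $B_{r_k,cr_k,l}(0)$, then box-stacking from the positive face to force $D'=\infty$---mirrors the paper's. The paper uses a dyadic decomposition $\{2^m\le\mathfrak{M}'<2^{m+1}\}$ and, crucially, exploits the \emph{upper} constraint $\mathfrak{M}'<2^{m+1}$ so that only \emph{two} stacked boxes of size $2^{m-1}$ are needed (reaching level $2^{m+1}$ from level $2^m$ already contradicts $\mathfrak{M}'<2^{m+1}$). You instead stack infinitely many geometrically growing boxes; this is not wrong in principle, but it makes the next issue more acute.

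The genuine gap is the sentence ``the probability of such a failure at the $(j{+}1)$-st step is at most $L_{j+1}^{-M}$.'' After applying the quenched strong Markov property at the exit of the $j$-th box, the restart point $y_j$ is \emph{random and correlated with the environment}, so you cannot conclude
\[
P_0\bigl[\text{good through }j,\ \text{bad at }j{+}1\ \big|\ \mathfrak{F}_{0,L}\bigr]\ \le\ L_{j+1}^{-M}
\]
from the fact that $P_{y}[\,\text{bad}\mid\mathfrak{F}_{0,L}]\le L_{j+1}^{-M}$ holds for each \emph{fixed} $y$. There is no independence between the history (which determines $y_j$) and the environment in the next box, and the $(j{+}1)$-st box even overlaps the previous ones, so no tower-property argument with an intermediate $\mathcal{H}_{y',l}$ rescues this. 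The obstacle you flag in your last paragraph---that the stacked boxes lie forward of the half-space generating $\mathfrak{F}_{0,L}$---is necessary but not sufficient; the real issue is decoupling the random restart from the environment.

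The paper resolves this by introducing a \emph{good-environment} event $G$ on which the quenched exit probability is uniformly large for \emph{every} candidate restart point, and bounding $\mathbb{P}[G^c\mid\mathfrak{F}_{0,L}]$ via Chebyshev together with $(PC)_{M,c}|l$. This costs a factor of the boundary size $\sim 2^{m(d-1)}$ and halves the effective exponent to $M/2$; combined with the crude sum over $y\in\partial^+B_{2^m,c2^m,l}(0)$ (another factor $\sim 2^{m(d-1)}$) and the weight $2^{2m}$ from the second moment, one needs $2+2(d-1)+2(d-1)<M/2$, which is exactly why the hypothesis is $M>4d+1$ rather than the $M>2$ your argument would require if it were valid. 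Your scheme can be repaired along the same lines, but you must track these boundary-size factors through every stacking step.
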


\begin{proof}
To simplify the proof, we will show that the second moment of

$$
\mathfrak{M'}:=\sup_{0\leq n \leq D'}(X_n-X_0)\cdot l$$
is bounded from above.
Note that

\begin{eqnarray}
\nonumber
&E_0[\mathfrak{M'}^2, D'< \infty |  \mathfrak{F}_{0,L}]\leq
P_0[D'< \infty \mid  \mathfrak{F}_{0,L}]\\
\label{M2} &
+\sum_{m\geq0} 2^{2(m+1)}P_0[2^m\leq \mathfrak{M'}<2^{m+1},D'< \infty \mid \mathfrak{F}_{0,L}].
\end{eqnarray}
Therefore, it is enough to obtain an appropriate upper bound
 of the probability when $m$ is large
$$
P_0[2^m\leq \mathfrak{M'}<2^{m+1},D'< \infty \mid \mathfrak{F}_{0,L}].
$$
Note that,

\begin{eqnarray}
\nonumber
&P_0[2^m\leq \mathfrak{M'}<2^{m+1},D'< \infty \mid \mathfrak{F}_{0,L}]\\
\nonumber&\leq
P_0[T^l _{2^m}<D'<\infty, T^l _{2^{m+1}}\circ\theta_{T_{2^m}}
> D'(0)\circ\theta_{T_{2^m}}
 \mid \mathfrak{F}_{0,L}]\\
\nonumber
&\leq
\nonumber
P_0[ X_{T^l _{2^m }}\not \in \partial^+B_{2^m,c2^m,l}(0),  T^l _{2^m }<D'<\infty  \mid \mathfrak{F}_{0,L}]\\
\label{Pm}
&+P_0[X_{T^l _{2^m}}\in \partial^+B_{2^m,c2^m,l}(0),
T^l _{2^{m+1}}\circ\theta_{T_{2^m}}
> D'(0)\circ\theta_{T_{2^m}}\mid \mathfrak{F}_{0,L}].
\end{eqnarray}
Using $(PC)_{M,c}|l$, we get the following upper bound for the first term
of the rightmost expression in  (\ref{Pm}),

\begin{eqnarray}
\nonumber
&P_0[X_{T_{B_{2^m,c2^m,l}(0)}}\not\in \partial^+B_{2^m,2^m,l}(0),(X_n)_{0\leq n \leq T_{B_i(0)}}\subset (H_{0,l})^c|\mathcal H_{0,l}]\\
\label{b1M}
&\leq
2^{-M m}.
\end{eqnarray}
As for the second term in the rightmost expression
in (\ref{Pm}), it will be useful to introduce the set
$$
F_{m}:=\partial^+B_{2^m,c2^m,l}(0).
$$
Now, by the strong Markov property we have the bound

\begin{eqnarray}
\nonumber
& P_0[X_{T_{B_{2^m,c2^m,l}}(0)}\in \partial^+B_{2^m,2^m,l}(0),  T^l _{2^{m+1}}\circ\theta_{T^l _{2^m}}> D'(0)\circ\theta_{T^l _{2^m}}\mid \mathfrak{F}_{0,L}]\\
\label{Pyc}
&\leq \sum_{y\in F_{m}}P_y[T^l _{2^{m+1} }>D'(0) \mid \mathfrak{F}_{0,L}].
\end{eqnarray}
In order to estimate this last conditional probability, we obtain a lower bound for its complement as follows.
To simplify the computations which follow, for each $x\in\mathbb Z^d$ we
introduce the notation

$$
B_x:=B_{2^{m-1},c2^{m-1},l}(x).
$$
Now, note that under the assumption (\ref{restriction})
we have that

$$
c \left(2^m+2^{m-1}\right)\leq \cot(\beta)2^{m-1},
$$
which implies that
the boxes $B_y$  and $B_z$,
 for all
$y\in F_{m}$ and $z\in \partial^+ B_y$,
are inside the cone $C(0,l,\alpha)$ (see  Figure \ref{fig}).
\begin{figure}[h]
  \begin{center}
  \includegraphics[width=8cm]{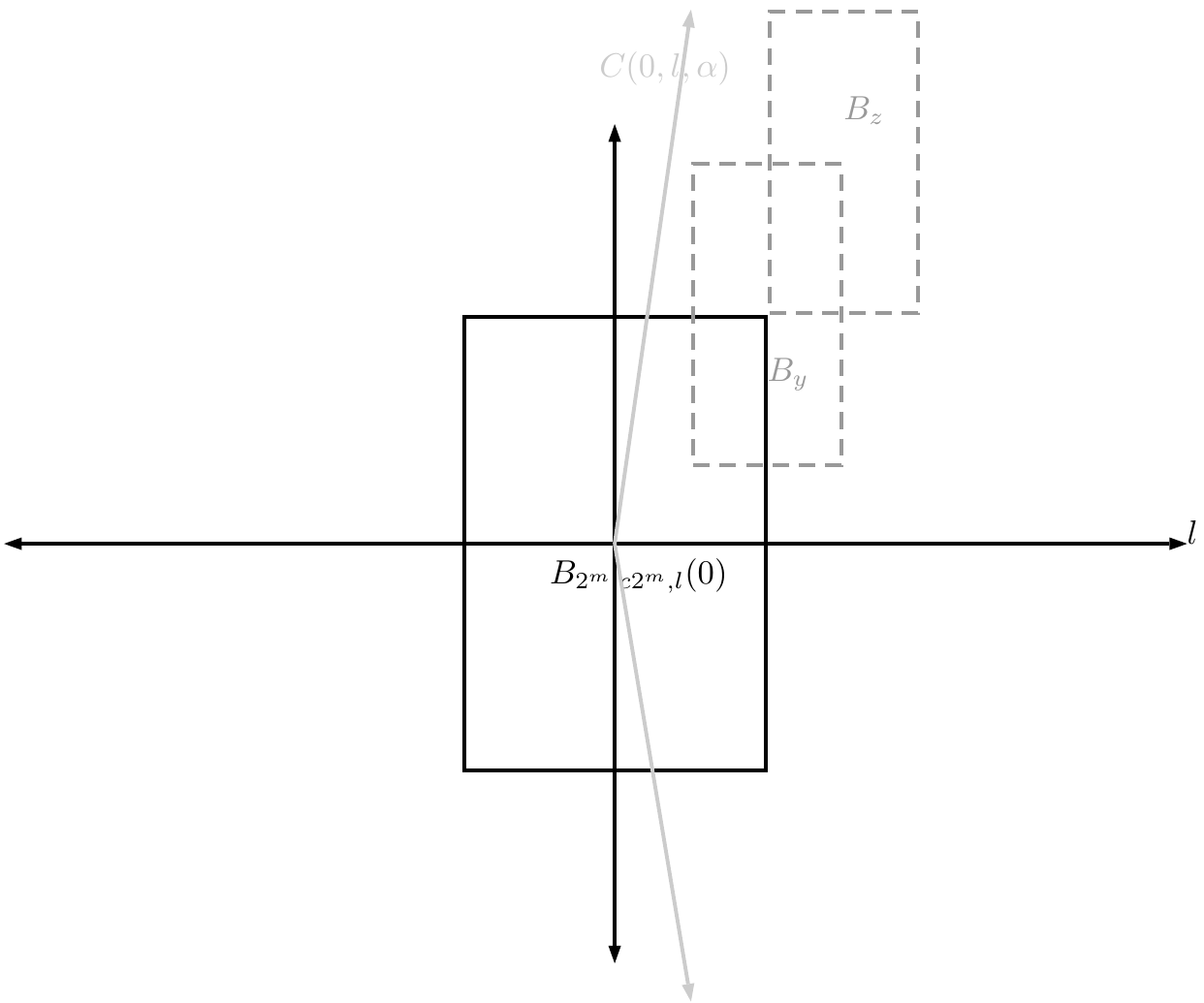}\\
  \caption{The boxes $B_y$ and $B_z$ are inside of $C(0,l,\alpha)$.}\label{fig}
  \end{center}
\end{figure}

\medskip
\noindent
Therefore, fixing $y \in F_m$,  it follows
that
\begin{eqnarray}
\nonumber
& P_y[T^l _{2^{m+1}} < D'(0) \mid \mathfrak{F}_{0,L}]\geq\\
\nonumber
&
 \sum_{z \in  \partial^+ B_y}\mathbb{E}[P_{y,\omega}[X_{T_{B_y}}\in
\partial^+ B_y,\\
\label{Pyc2}
& X_{T_{B_y}}=z,
(X_{T_{B_z}}\in\partial^+ B_z)\circ\theta_{T_{B_y}}]| \mathfrak{F}_{0,L}].
\end{eqnarray}
To estimate the right-hand side of the above
inequality, it will be convenient to introduce the set
$$
\bar F_{m}:=\partial[\cup_{y\in F_{m}}B_y]\cap \{ R([ 2^{m-1}+2^m,\infty)\times \mathbb{R}^{d-1})\},
$$
and the event

\begin{eqnarray*}
&G_{\bar F_{m}}:=\{\omega\in \Omega : \ P_{z,\omega}[X_{T_{B_z}}\in \partial^+B_z]>\\
&1-2^{-\frac{M(m-1)}{2}}, \ \mbox{for all }z \in \bar F_{m} \}.
\end{eqnarray*}
Using the strong Markov property, we can now bound from below the right-hand side
of  inequality (\ref{Pyc2}) by
\begin{equation}
\label{expression}
(1-2^{-\frac{M(m-1)}{2}})\left(P_y[X_{T_{B_y}}\in \partial^+ B_y| \mathfrak{F}_{0,L}]-
P_y[(G_{\bar F_{m}})^c|\mathfrak{F}_{0,L}]\right).
\end{equation}
In turn, by means of the polynomial condition and  the
fact that  the boxes $B_y$ and $B_z$ are inside the cone $C(0,l,\alpha)$
 we  see that (\ref{expression})  is greater than or equal to

\begin{equation}
\label{in1}
(1-2^{-\frac{M(m-1)}{2}})\left(1-2^{-M(m-1)}-P_y[(G_{\bar F_{m}})^c|\mathfrak{F}_{0,L}]\right).
\end{equation}
Now, note that

\begin{eqnarray}
\nonumber
&P_y[(G_{\bar F_{m}})^c|\mathfrak{F}_{0,L}]\leq  \sum_{x\in \bar F_{m} }2^{\frac{M(m-1)}{2}}P_{x}[
X_{T_{B_x}}\not\in \partial^+B_x|\mathfrak{F}_{0,L}]\\
\label{expression2}
&\le
|\bar F_{m}|2^{-\frac{M(m-1)}{2}}\le (4c)^{d-1} 2^{m(d-1)}
2^{-\frac{M(m-1)}{2}}.
\end{eqnarray}
where in the first inequality we have used  Chebyshev  inequality,
in the second one the
assumption that $(PC)_{M,c}|_l$ is satisfied
and in the third one the bound $|\bar F_{2m}|\le (4c)^{d-1}
2^{m(d-1)}$.

Consequently inserting the estimates (\ref{expression2}) into
(\ref{in1}) and combining this with inequality (\ref{Pyc2})
we conclude that

\begin{eqnarray}
\nonumber
&P_y[T^l _{2^{m+1}} \leq D'(0) \mid \mathfrak{F}_{0,L}]\geq \ (1-2^{-\frac{M(m-1)}{2}})\times\\
\nonumber
&(1-2^{-\frac{M(m-1)}{2}}-(4c)^{d-1}2^{m(d-1)}2^{-\frac{M(m-1)}{2}})\\
&
\label{Py}
\ge 1-3(4c)^{d-1}2^{m(d-1)}2^{-\frac{M(m-1)}{2}}.
\end{eqnarray}
Using the bound (\ref{Py}) in (\ref{Pyc}),
together with the estimate $|F_{m}|\leq (2c)^{d-1}2^{m(d-1)}$, we
 see that

\begin{eqnarray}
\nonumber
& P_0[X_{T_{B_{2^m,c2^m,l}}(0)}\in \partial^+B_{2^m,2^m,l}(0),  T^l _{2^{m+1}}\circ\theta_{T^l _{2^m}}> D'(0)\circ\theta_{T^l _{2^m}}\mid \mathfrak{F}_{0,L}]\\
\label{cestim}
&\leq
3(4c)^{2(d-1)}2^{2m(d-1)}2^{-\frac{M(m-1)}{2}}.
\end{eqnarray}
Combining the estimates (\ref{cestim}), (\ref{b1M}), (\ref{Pm}) with (\ref{M2}) we conclude that

\begin{eqnarray}
\nonumber
&E_0[\mathfrak{M'}^2, D'< \infty |  \mathfrak{F}_{0,L}]\\
\nonumber
&\le1+4(4c)^{2(d-1)}\sum_{m\geq0} 2^{2(m+1)}2^{2m(d-1)}2^{-\frac{M(m-1)}{2}}\\
\nonumber
&\le
1+4(4c)^{2(d-1)}\sum_{m\geq0} 2^{-m}\le c_6,
\end{eqnarray}
where in the second to last inequality we have used the fact that $M>4d+1$
and $c_6$ is a constant that does not depend on $L$.
This completes the proof of the Lemma.
\end{proof}

\medskip
\subsection{Proof of Proposition \ref{prop-sec5}}
 To simplify the
computations, we introduce the notation

\begin{eqnarray}
\nonumber
&b=b(L):=P_0(D'<\infty)+\phi(L),\\
\nonumber
&b'=b'(L):=P_0(D'=\infty)+\phi(L)
\end{eqnarray}
and  $E_{\mathbb P\otimes Q}:=\mathbb E E_Q$.
Furthermore, it will be necessary to define for each $j\ge 0$ and $n\ge L+j$ the events
$$
D_{j,n}:=\{\varepsilon\in W^{\mathbb N}:(\varepsilon_{m},\ldots,\varepsilon_{m+L-1})\ne\overline{\varepsilon}^{(L)}\ {\rm for}\ {\rm all}\ j\le m\le j+n-L+1\}.
$$
The following lemma, whose proof is presented
in Appendix \ref{appendix}, will be useful
in the proof of Proposition \ref{prop-sec5}.

\medskip

\begin{lemma}
\label{qapp}
There exists a constant $c_{7}$ such that
for all $n\ge L^2$ one has that

$$
Q[D_{0,n}]\le
(1-c_{7}L^2\kappa^L)^{\left[\frac{n}{L^2}\right]}.
$$
\end{lemma}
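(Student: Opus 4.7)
The plan is to partition the index range $[0, n-L+1]$ into $[n/L^2]$ disjoint blocks of length $L^2$ and apply the second-moment (Paley--Zygmund) inequality inside each block. The two observations driving the argument are that $\overline{\varepsilon}^{(L)}$ lies entirely in $\mathcal{E}$, so for every fixed $m$ the event $A_m := \{(\varepsilon_m,\ldots,\varepsilon_{m+L-1}) = \overline{\varepsilon}^{(L)}\}$ has $Q$-probability exactly $\kappa^L$, and that $Q$ is a product measure, so events measurable with respect to disjoint coordinate sets are independent.

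Concretely, for $0 \le k \le [n/L^2]-1$ I would set
$$B_k := \bigcup_{m=kL^2}^{(k+1)L^2-L} A_m,$$
so that $B_k$ is measurable with respect to $\{\varepsilon_j : kL^2 \le j \le (k+1)L^2 - 1\}$. These coordinate ranges are disjoint across $k$, hence the events $\{B_k\}$ are mutually independent under $Q$. Since $D_{0,n} \subseteq \bigcap_k B_k^c$, one obtains $Q[D_{0,n}] \le \prod_k (1 - Q[B_k])$, and by stationarity of $Q$ it is enough to prove $Q[B_0] \ge c_{7} L^2 \kappa^L$.

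To lower bound $Q[B_0]$ I would apply Paley--Zygmund to $X := \sum_{m=0}^{L^2-L}\mathds{1}_{A_m}$, for which $E_Q[X] = (L^2 - L + 1)\kappa^L$. Splitting pairs $(m,m')$ by their shift $h := |m - m'|$, windows with $h \ge L$ depend on disjoint coordinates, so $Q[A_m \cap A_{m'}] = \kappa^{2L}$, while for $1 \le h \le L-1$ the conjunction constrains exactly $L+h$ coordinates (and is empty when $\overline{\varepsilon}^{(L)}$ fails to be self-compatible under shift $h$), giving $Q[A_m \cap A_{m'}] \le \kappa^{L+h}$. Counting at most $2L^2$ ordered pairs per value of $h$, one gets
$$E_Q[X^2] \le L^2\kappa^L + 2L^4 \kappa^{2L} + \frac{2L^2\kappa^{L+1}}{1-\kappa} \le C L^2\kappa^L$$
for all sufficiently large $L$, since $L^2\kappa^L \to 0$. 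The Paley--Zygmund inequality then yields $Q[B_0] \ge E_Q[X]^2/E_Q[X^2] \ge c_{7}L^2\kappa^L$, and the finitely many small values of $L$ are absorbed by shrinking $c_{7}$.

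The only delicate point is the overlapping contribution to $E_Q[X^2]$: a priori the periodic structure of $\overline{\varepsilon}^{(L)}$ could create many self-compatible shifts, but because each compatible shift $h$ forces $L + h$ symbols in $\mathcal{E}$, the geometric factor $\kappa^h$ keeps the overlap sum of order $L^2\kappa^L$, so no fine combinatorial analysis of the pattern is required.
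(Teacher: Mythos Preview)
Your proof is correct and follows essentially the same route as the paper: block the index set into $[n/L^2]$ independent windows of length $L^2$, then show that within a single window the pattern appears with probability at least $c\,L^2\kappa^L$ via a second-moment computation. The only difference is that the paper uses the Bonferroni/inclusion--exclusion lower bound $Q[B_0]\ge \sum_m Q[A_m]-\sum_{m<m'}Q[A_m\cap A_{m'}]$, whereas you use Paley--Zygmund; both rest on the same estimate for $\sum_{m,m'}Q[A_m\cap A_{m'}]$, and your version has the mild advantage of not requiring the pairwise-intersection sum to be strictly smaller than the first-moment sum.
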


\medskip

\noindent
We now present the proof
of Proposition \ref{prop-sec5}, divided in several steps.
For the sake of simplicity, we will write $\tau$ instead of $\tau^{(L)}$.

\medskip

\noindent\emph{Step 0}.
We first note that

\begin{eqnarray}
\nonumber
&\bar E_0[(X_{\tau}\cdot u)^2 \mid \mathfrak{F}_{0,L}]=\\
\nonumber
&
\displaystyle{\sum_{k=1}^\infty}
\displaystyle{\sum_{k'=0}^{k-1}}\bar E_0[(X_{S_{k'+1}}\cdot u)^2-(X_{S_{k'}}\cdot u)^2,S_k<\infty, D'\circ\theta_{S_k}=\infty \mid \mathfrak{F}_{0,L}].\\
\label{decomXt}
\end{eqnarray}
Throughout the subsequent steps of the proof we will estimate
the right-hand side of (\ref{decomXt}).

\medskip

\noindent\emph{Step 1}. Here we will prove the following estimate
valid for all $k\ge 1$ and $0\le k'<k$.

\begin{eqnarray}
\nonumber
&\bar E_0[(X_{S_{k'+1}}\cdot u)^2-(X_{S_{k'}}\cdot u)^2, S_k<\infty,
 D'\circ\theta_{S_k}=\infty\mid \mathfrak{F}_{0,L}]\\
\label{number7.1}
&\le b' b^{k-k'-1}
 \bar E_0[(X_{S_{k'+1}}\cdot u)^2-(X_{S_{k'}}\cdot u)^2, S_{k'+1}<\infty
 \mid \mathfrak{F}_{0,L}].
\end{eqnarray}
Furthermore, define the set

$$
H^{L}:=\left\{y\in \mathbb{Z}^d: y\cdot u \geq L\frac{|u|_2}{|u|_1}\right\}.
$$
Then, for each $0\le k'<k$, one has that
\begin{eqnarray}
\nonumber
& \bar E_0[(X_{S_{k'+1}}\cdot u)^2-(X_{S_{k'}}\cdot u)^2, S_k<\infty, D'
\circ\theta_{S_k}=\infty \mid \mathfrak{F}_{0,L}]\\
\nonumber
&\displaystyle{=\sum_{n\geq1, x\in H^L}}E_{\mathbb{P}\otimes Q}[E_{\omega,\varepsilon}[(X_{S_{k'+1}}\cdot u)^2-(X_{S_{k'}}\cdot u)^2, S_k=n,\\
\nonumber
&X_{S_k}=x, D'\circ\theta_{n}=\infty \mid \mathfrak{F}_{0,L}]\\
\nonumber
&\displaystyle{=\sum_{n\geq1, x\in H^L}}E_{\mathbb{P}\otimes Q}[E_{\omega,\varepsilon}[(X_{S_{k'+1}}\cdot u)^2-(X_{S_{k'}}\cdot u)^2, S_k=n,X_{n}=x]\\
\nonumber
&P_{\vartheta_x \omega,\ \theta_n\varepsilon}[D'=\infty]\mid \mathfrak{F}_{0,L}]\\
\nonumber
&\displaystyle{=\sum_{x\in H^L}}\mathbb{E}[\bar E_{0,\omega}[(X_{S_{k'+1}}\cdot u)^2-(X_{S_{k'}}\cdot u)^2, S_k<\infty,X_{S_k}=x]\\
\nonumber
&P_{x,\omega}[D'=\infty]\mid \mathfrak{F}_{0,L}], \label{number1}\\
\end{eqnarray}
where here for each $x\in\mathbb Z^d$, $\vartheta_x$ denotes the canonical
space shift in $\Omega$ so that $\vartheta_x\omega(y)=\omega(x+y)$, while
for each $n\ge 0$, $\theta_n$ denotes the canonical time shift in
the space $W$ so that $(\theta_n\epsilon)_m=\epsilon_{n+m}$,
in the first equality we have used the fact that the value
of $X_{S_k}\cdot u\ge X_{S_1}\cdot u$,
in the second equality the Markov property
and in the last equality we have used the independence of the
coordinates of $\epsilon$ and the fact that the law of the
random walk is the same under $P_{x,\omega}$ and under
$E_{Q}P_{\vartheta_x \omega,\theta_n\epsilon}$.

Moreover, by  the fact that the first factor inside the
expectation of the right-most expression of (\ref{number1}) is $\mathfrak F_{x,L}$-measurable,
the right-most expression in (\ref{number1}) is equal to
\begin{eqnarray}
\nonumber
& \displaystyle{\sum_{x\in H^{L}}}\mathbb{E}[\bar E_{0,\omega}[(X_{S_{k'+1}}\cdot u)^2-(X_{S_{k'}}\cdot u)^2, S_k<\infty,X_{S_k}=x]\\
\label{number2}
& \mathbb{E}[P_{x,\omega}[D'=\infty]\mid \mathfrak{F}_{x,L}]\mid \mathfrak{F}_{0,L}].
\end{eqnarray}
Applying next Lemma \ref{lemadinf} to (\ref{number2}), we see that
\begin{eqnarray}
\nonumber
&\sum_{x\in H^{L}}\mathbb{E}[\bar E_{0,\omega}[(X_{S_{k'+1}}\cdot u)^2-(X_{S_{k'}}\cdot u)^2, S_k<\infty,X_{S_k}=x]\\
\nonumber
&\times \mathbb{E}[P_{x,\omega}[D'=\infty]\mid \mathfrak{F}_{x,L}]\mid \mathfrak{F}_{0,L}]\\
\label{number3}
&\le b'
\bar E_0[(X_{S_{k'+1}}\cdot u)^2-(X_{S_{k'}}\cdot u)^2, S_k<\infty\mid \mathfrak{F}_{0,L}].
\end{eqnarray}
Next, observe that for $k'<k$ one has that

\begin{eqnarray}
\nonumber
&\bar E_0[(X_{S_{k'+1}}\cdot u)^2-(X_{S_{k'}}\cdot u)^2, S_k<\infty\mid \mathfrak{F}_{0,L}]\\
\nonumber
&=\bar E_0[(X_{S_{k'+1}}\cdot u)^2-(X_{S_{k'}}\cdot u)^2, R_{k-1}<\infty\mid \mathfrak{F}_{0,L}]\\
\nonumber
&=\sum_{x\in H^{L}}\mathbb{E}[\bar E_{0,\omega}[(X_{S_{k'+1}}\cdot u)^2-(X_{S_{k'}}\cdot u)^2, S_{k-1}<\infty, X_{S_{k-1}}=x,\\
\nonumber
&D' \circ \theta_{S_{k-1}}<\infty]\mid \mathfrak{F}_{0,L}]\\
\nonumber
&=\sum_{x\in H^{L}}\mathbb{E}[\bar E_{0,\omega}[(X_{S_{k'+1}}\cdot u)^2-(X_{S_{k'}}\cdot u)^2, S_{k-1}<\infty, X_{S_{k-1}}=x]\\
\nonumber
&P_{x,\omega}[ D' <\infty]\mid \mathfrak{F}_{0,L}]\\
\nonumber
&=\sum_{x\in H^{L}}\mathbb{E}[\bar E_{0,\omega}[(X_{S_{k'+1}}\cdot u)^2-(X_{S_{k'}}\cdot u)^2, S_{k-1}<\infty, X_{S_{k-1}}=x]\\
\label{number5}
&\mathbb{E}[P_{x,\omega}[ D' <\infty]\mid \mathfrak{F}_{x,L}]\mid \mathfrak{F}_{0,L}].
\end{eqnarray}
By  Lemma \ref{lemadinf}, we have that
$\mathbb{E}[P_{x,\omega}[ D' <\infty]\mid \mathfrak{F}_{x,L}]\leq b=P_0[D'<\infty] +\phi(L)$. Using this inequality to estimate  the last term in (\ref{number5}), we see
that

\begin{eqnarray}
\nonumber
&\bar E_0[(X_{S_{k'+1}}\cdot u)^2-(X_{S_{k'}}\cdot u)^2, S_k<\infty\mid \mathfrak{F}_{0,L}]\\
\nonumber
&\le b\bar E_0[(X_{S_{k'+1}}\cdot u)^2-(X_{S_{k'}}\cdot u)^2, S_{k-1}<\infty \mid \mathfrak{F}_{0,L}].
\end{eqnarray}
By induction on $k$ we get that

\begin{eqnarray}
\nonumber
&\bar E_0[(X_{S_{k'+1}}\cdot u)^2-(X_{S_{k'}}\cdot u)^2, S_k<\infty\mid \mathfrak{F}_{0,L}]\\
\label{number7}
&\le b^{k-k'-1}
 \bar E_0[(X_{S_{k'+1}}\cdot u)^2-(X_{S_{k'}}\cdot u)^2, S_{k'+1}<\infty \mid \mathfrak{F}_{0,L}].
\end{eqnarray}
Combining (\ref{number7}) with (\ref{number3}) we obtain (\ref{number7.1}).

\medskip

\noindent \emph{ Step 2}.
For $k\ge 1$ we define

\begin{equation}
\label{number10}
M_k:=\sup_{0\leq n \leq R_k}X_n \cdot u.
\end{equation}
Define also the sets parametrized by $k$ and $n\ge 0$

\begin{equation}
\label{ank}
A_{n,k}:=\left\{\varepsilon\in W^{\mathbb N} :\left(\varepsilon_{t^{(n)}_{k}},\varepsilon_{t^{(n)}_{k}+1},\ldots, \varepsilon_{t^{(n)}_{k}+L-1}\right)=\overline{\varepsilon}^{(L)}\right\}
\end{equation}
and

\begin{equation}
\label{bnk}
B_{n,k}:=\left\{\varepsilon\in W^{\mathbb N} : \left(\varepsilon_{t^{(j)}_{k}},\varepsilon_{t^{(j)}_{k}+1},\ldots,\varepsilon_{t^{(j)}_{k }+L-1}\right)\ne \overline{\varepsilon}^{(L)}\ {\rm for}\ {\rm all}\ 0\le j\le n-1\right\},
\end{equation}
where we define the  sequence of stopping times [c.f. (\ref{tmayor})]
parameterized by $k$ and  recursively on $n\ge 0$ by
\begin{equation}
\nonumber
t^{(0)}_k:=\bar T^{l}_{M_k}
\end{equation}
and the successive times where a record value of the projection of the
random walk on $l$ is achieved by

\begin{equation}
\nonumber
t_k^{(n+1)}:=\bar T^{l}_{X_{t_k^{(n)}}\cdot u}.
\end{equation}
In this step we will  show that for all $k\ge 0$
one has that

\begin{eqnarray}
\nonumber
&\bar E_0[(X_{S_{k+1}}\cdot u)^2-(X_{S_{k}}\cdot u)^2, S_{k+1}<\infty | \mathfrak{F}_{0,L}]\\
\nonumber
&\le
\sum_{n=0}^{L^2-1}\bar E_0[(X_{S_{k+1}}\cdot u)^2-(X_{S_{k}}\cdot u)^2,t^{(n)}_{k}<\infty,  A_{n,k}  \mid \mathfrak{F}_{0,L}]\\
\label{number11}
&\!\!+\sum_{n=L^2}^\infty\bar E_0[(X_{S_{k+1}}\cdot u)^2-(X_{S_{k}}\cdot u)^2,t^{(n)}_{k}<\infty, B_{n,k}, A_{n,k}  \mid \mathfrak{F}_{0,L}],
\end{eqnarray}
To prove (\ref{number11}), we have to introduce some further notations.
Now, note that
on the event $A_{n,k}\cap B_{n,k}$ one has that

$$
S_{k+1}=t^{(n)}_k+L.
$$
Thus, as a consequence of the definition of $S_{k+1}$, one has that
$\bar P_0$-a.s.

\begin{equation}
\label{number9}
\{S_{k+1}<\infty\}\subset \bigcup_{n\ge 0}\{t^{(n)}_{k}<\infty, B_{n,k}, A_{n,k}\}.
\end{equation}
Display (\ref{number11}) now follows directly from (\ref{number9}).

\medskip

\noindent\emph{Step 3}.
Here we will derive an upper bound for the
 two sums appearing in  the right-hand side  in
(\ref{number11}). In fact, we will prove that there is a constant
$c_8$ such that
for all $k\ge 1$ one has that

\begin{eqnarray}
\nonumber
&\sum_{n=0}^{L^2-1}\bar E_0[(X_{S_{k+1}}\cdot u)^2-(X_{S_{k}}\cdot u)^2,t^{(n)}_{k}<\infty,  A_{n,k}  \mid \mathfrak{F}_{0,L}]\\
\label{step3}
&\le c_8 \kappa^L\left(L^4 b^{k-1}
+L^2\bar E_0[X_{S_k}\cdot u,S_k<\infty|\mathfrak F_{0,L}]\right)
\end{eqnarray}
and

\begin{eqnarray}
\nonumber
&\sum_{n=L^2}^{\infty}\bar E_0[(X_{S_{k+1}}\cdot u)^2-(X_{S_{k}}\cdot u)^2,t^{(n)}_{k}<\infty,  B_{n,k},A_{n.k}  \mid \mathfrak{F}_{0,L}]\\
\nonumber
&\le
c_8\sum_{n=L^2}^\infty
 \kappa^L (1-c_{7}\kappa^L)^{\left[\frac{n}{L^2}\right]}
\left((n+L)^2 b^{k-1}\right.\\
\label{step3.1}
&\left. +(n+L)\bar E_0[X_{S_k}\cdot u,S_k<\infty|\mathfrak F_{0,L}]\right).
\end{eqnarray}
Note that for all $n\ge 0$ one has that

$$
X_{t^{(n+1)}_{k}}\cdot u \leq X_{t^{(n)}_{k}}\cdot u + |u|_\infty,
$$
and hence by induction on $n$ we get that

$$
X_{t^{(n)}_{k}}\cdot u\leq M_k+ (n+1) |u|_\infty.
$$
Therefore,  if we set
\begin{equation}
\label{L'}
L':=\frac{L|u|}{|u|_1}+ |u|_\infty\leq c_{9}L,
\end{equation}
where $c_{9}$ is a constant depending on $l$ and $d$,
we can see that $P_0$-a.s  on the event $\{t^{(n)}_{k}<\infty,\ A_{n,k} \}$
one has that

\begin{equation}
\label{number12}
X_{S_{k+1}}\cdot u\leq N_{k,n}:=M_k +n|u|_\infty +L'.
\end{equation}
Therefore, for all $0\le n\le L^2-1$ one has that

\begin{eqnarray}
\nonumber
&\bar E_0[(X_{S_{k+1}}\cdot u)^2-(X_{S_{k}}\cdot u)^2,t^{(n)}_{k}<\infty,  A_{n,k}  \mid \mathfrak{F}_{0,L}]\\
\nonumber
&
\le\bar E_0[N_{k,n}^2-(X_{S_{k}}\cdot u)^2,t^{(n)}_{k}<\infty,  A_{n,k}\mid \mathfrak{F}_{0,L}]\\
\nonumber
&=
\sum_{j= 0}^\infty \sum_{x\in\mathbb{Z}^d} E_{\mathbb{P} \otimes Q}[E_{\omega, \varepsilon}[N_{k,n}^2-(X_{S_k}\cdot u)^2,\\
\nonumber
&t^{(n)}_{k}=j,X_{j}=x]\mathds{1}_{\{(\varepsilon_{j},\ldots,\varepsilon_{j+L-1})=\overline{\varepsilon}^{(L)}\} } \mid \mathfrak{F}_{0,L}]\\
&
\label{number13}
\le \kappa^L \bar E_0[N_{k,n}^2-(X_{S_k}\cdot u)^2,R_{k }<\infty \mid \mathfrak{F}_{0,L}],
\end{eqnarray}
where in the  equality we have applied the Markov property
and in the second inequality the fact that $Q$ is a product measure
and that $R_k\le t^{(n)}_k$. Similarly  for all $n\ge L^2$
one has that

\begin{eqnarray}
\nonumber
&\bar E_0[(X_{S_{k+1}}\cdot u)^2-(X_{S_{k}}\cdot u)^2,t^{(n)}_{k}<\infty, B_{n,k}, A_{n,k}  \mid \mathfrak{F}_{0,L}]\\
\nonumber
&
\le\bar E_0[N_{k,n}^2-(X_{S_{k}}\cdot u)^2,t^{(n)}_{k}<\infty,  B_{n,k},A_{n,k}\mid \mathfrak{F}_{0,L}]\\
\nonumber
&\le \sum_{j= 0}^\infty\sum_{j'= j+n}^\infty\sum_{y\in\mathbb Z^d}E_{\mathbb{P}\otimes Q }\ [E_{\omega,\varepsilon}[N_{k,n}^2-(X_{S_{k}}\cdot u)^2,\\
\nonumber
&X_{t^{(0)}_{k}}=y,t^{(0)}_{k}=j ] P_{\theta_y \omega,\theta_{j} \varepsilon}[
D_{j,n}, t^{(n)}_{k}=j' ]
\mathds{1}_{\{ (\varepsilon_{j'},\ldots, \varepsilon_{j'+L-1})=\overline{\varepsilon}^{(L)}\}}]\mid\mathfrak{F}_{0,L}]\\
&
\nonumber
\le \kappa^L Q[D_{0,n}] \bar E_0[N_{k,n}^2-(X_{S_k}\cdot u)^2,R_{k }<\infty \mid \mathfrak{F}_{0,L}]\\
&\label{number13.1}
\le \kappa^L(1-c_{7}L^2\kappa^L)^{\left[\frac{n}{L}\right]} \bar E_0[N_{k,n}^2-(X_{S_k}\cdot u)^2,R_{k }<\infty \mid \mathfrak{F}_{0,L}],
\end{eqnarray}
\noindent where in the second inequality we have used
the Markov property, in the third one the fact that
$R_{k}\le t_{k}^{(0)}$ and in the last one Lemma \ref{qapp}.

Now, by displays (\ref{number13})
and (\ref{number13.1}), to finish the proof of inequalities (\ref{step3})
and (\ref{step3.1}) it is
 enough to prove that there  is a constant $c_{10}$ such that

\begin{eqnarray}
\nonumber
&\bar E_0[N_{k,n}^2-(X_{S_k}\cdot u)^2,R_{k }<\infty \mid \mathfrak{F}_{0,L}]\\
&
\label{newn}
\le
c_{10}\left((n+L)^2 b^{k-1}+(n+L)\bar E_0[X_{S_k}\cdot u,S_k<\infty|\mathfrak F_{0,L}]\right),
\end{eqnarray}
using the fact that $n\le L^2-1$ in the left-hand side of
inequality (\ref{step3}).
To prove (\ref{newn}), the following identity will be useful

\begin{eqnarray}
\nonumber
&N_{k,n}^2-(X_{S_{k}}\cdot u)^2= (M_k-X_{S_{k}}\cdot u)^2\\
\nonumber
&+2(n|u|_\infty+L')(M_k-X_{S_k}\cdot u)+2(n|u|_\infty+L')X_{S_k}\cdot u\\
\label{number15}
&+2(M_k-X_{S_{k}}\cdot u)X_{S_{k}}\cdot u+(n|u|_\infty+L')^2.
\end{eqnarray}
We will now insert this decomposition in the left-hand side of
 (\ref{newn}) and bound the corresponding expectations
of each term. Let us begin with the expectation
of the last term. Note that
by an argument similar to the one developed in {\it Step 1}
we have that

\begin{equation}
\label{5}
\bar E_0[(n|u|_\infty+L')^2, R_k<\infty|\mathfrak F_{0,L} ]\le
c_{11} (n+L)^2 b^k,
\end{equation}
for some constant $c_{11}$.
Similarly, the expectation of the first term of the right-hand side of
 display (\ref{number15}) can be bounded using Lemma \ref{M},
so that
\begin{eqnarray}
\nonumber
&\bar E_0[(M_k-X_{S_k}\cdot u)^2, R_k<\infty  \mid \mathfrak{F}_{0,L} ]\\
\nonumber
&=\displaystyle{\sum_{x\in H^{L}}}\mathbb{E}[\bar P_{0,\omega}[S_k<\infty, X_{S_k}=x]E_x[\mathfrak{M}^2,D'<\infty \mid \mathfrak{F}_{x,L}] \mid \mathfrak{F}_{0,L}]\\
\label{number18}
&\leq c_6 b^{k-1}.
\end{eqnarray}
Again, for the expectation of the second term of the right-hand side of display
(\ref{number15}),  we have that

\begin{eqnarray}
\nonumber
&\bar E_0[ 2(n|u|_\infty+L')(M_k-X_{S_k}\cdot u), R_k<\infty \mid \mathfrak{F}_{0,L} ]\\
&
\label{number19}
\leq c_{12} b^{k-1}(n+L),
\end{eqnarray}
for some suitable positive constant $c_{12}$.
For the expectation of fourth term of the right-hand side of (\ref{number15}),
we see by Lemma \ref{M} that

\begin{eqnarray}
\nonumber
&\bar E_0[2(M_k-X_{S_{k}}\cdot u)X_{S_{k}}\cdot u , R_k<\infty  \mid \mathfrak{F}_{0,L}]\\
\label{number20}
&\le 2\sqrt{c_6}\bar E_0[X_{S_{k}}\cdot u , S_k<\infty  \mid \mathfrak{F}_{0,L}].
\end{eqnarray}
Finally, for the expectation of third term of the
right-hand side of (\ref{number15})
we have that

\begin{eqnarray}
\nonumber
&\bar E_0[2(n|u|_\infty+L')X_{S_k}\cdot u,R_k<\infty  \mid \mathfrak{F}_{0,L}]\\\
\label{number21}
& \leq c_{12} b(n+L)  \bar E_0[X_{S_k}\cdot u,S_k<\infty \mid \mathfrak{F}_{0,L}].
\end{eqnarray}
Using the bounds (\ref{number21}), (\ref{number20}), (\ref{number19}), (\ref{number18})
and (\ref{5}) we obtain inequality  (\ref{newn}).

\medskip

\noindent \emph{ Step 4}. Here we will derive for all $k\ge 1$ the
inequality

\begin{eqnarray}
\nonumber
&\bar E_0[X_{S_k}\cdot u,S_k<\infty|\mathfrak F_{0,L}]\\
\nonumber
&\le\sum_{k'=0}^{k-1} b^{k-k'-1}\left(
\sum_{n=0}^{L^2-1} \, \bar E_0[N_{k',n}-X_{S_{k'}}\cdot u, t^{(n)}_{k' }<\infty, A_{n,k'}\mid\mathfrak{F}_{0,L} ]+\right.\\
\label{number25}
&\!\!\!\!\!\left.\sum_{n = L^2}^\infty \, \bar E_0[N_{k',n}-X_{S_{k'}}\cdot u, t^{(n)}_{k'}<\infty, B_{n,k'}, A_{n,k'}\mid \mathfrak{F}_{0,L} ]\right).
\end{eqnarray}
Note that
\begin{eqnarray}
\nonumber
&\bar E_0[X_{S_k}\cdot u, S_k<\infty \mid\mathfrak{F}_{0,L}]\\
\label{number22}
& =\sum_{k'=0}^{k-1}\bar E_0[(X_{S_{k'+1}}-X_{S_{k'}})\cdot u, S_k<\infty \mid \mathfrak{F}_{0,L}].
\end{eqnarray}
By an argument  similar to the one used
in \emph{Step 1} we see that for $k'<k$ one has that

\begin{eqnarray}
\nonumber
&\bar E_0[(X_{S_{k'+1}}-X_{S_k'})\cdot u, S_k<\infty \mid \mathfrak{F}_{0,L} ]\\
\label{number23}
&\le b^{k-k'-1}
\bar E_0[(X_{S_{k'+1}}-X_{S_k'})\cdot u, S_{k'+1}<\infty \mid \mathfrak{F}_{0,L} ].
\end{eqnarray}
Now, we can use inclusion (\ref{number9}) in order to get that
\begin{eqnarray}
\nonumber
&\bar E_0[(X_{S_{k'+1}}-X_{S_k'})\cdot u, S_{k'+1}<\infty \mid \mathfrak{F}_{0,L} ]\\
\nonumber
&\le \sum_{n=0}^{L^2-1} \, \bar E_0[(X_{S_{k'+1}}-X_{S_k'})\cdot u, t^{(n)}_{k'}<\infty, B_{n,k'}, A_{n,k'}\mid\mathfrak{F}_{0,L} ]\\
\label{number24}
&+\sum_{n = L^2}^\infty \, \bar E_0[(X_{S_{k'+1}}-X_{S_k'})\cdot u, t^{(n)}_{k'}<\infty, B_{n,k'}, A_{n,k'}\mid \mathfrak{F}_{0,L} ],
\end{eqnarray}
where the events $A_{n,k'}$ and $B_{n,k'}$ are defined in (\ref{ank})
and (\ref{bnk}).
Using the fact that on the event
$\{t^{(n)}_{k'}<\infty, B_{n,k'}, A_{n,k'}\}$ one has that
$P_0$-a.s.

$$
(X_{S_{k'+1}}-X_{S_{k'}})\cdot u \leq N_{k',n}-X_{S_k'}\cdot u,
$$
we see that the right-hand side of (\ref{number24}) is
bounded by the right-hand side of (\ref{number25}), which is what we want
to prove.

\medskip

\noindent \emph{Step 5}.
Here we will obtain an upper bound
for the terms in the first summation in (\ref{number24}).
Indeed, note  that on
 $R_{k'}\le t^{(n)}_{k'}$, by an argument similar to the one
used to derive inequality (\ref{number13}), we have that
for all $0\le n\le L^2$ and $0\le k'\le k-1$

\begin{eqnarray}
\nonumber
&  \bar E_0[N_{k',n}-X_{S_{k'}}\cdot u, t^{(n)}_{k'}<\infty, A_{n,k'} \mid \mathfrak{F}_{0,L} ]\\
\nonumber
&\le\kappa^L \bar E_0[N_{k',n}-X_{S_{k'}}\cdot u, R_{k'}<\infty \mid \mathfrak{F}_{0,L} ].
\end{eqnarray}

\medskip

\noindent \emph{Step 6}.
Here we will obtain an upper bound
for the terms in the second summation in (\ref{number24}), showing that
for all $ n\ge L^2$ and $0\le k'\le k-1$,

\begin{eqnarray}
\nonumber
&E_0[N_{k',n}-X_{S_{k'}}\cdot u, t^{(n)}_{k'}<\infty, B_{n,k'}, A_{n,k'}\mid \mathfrak{F}_{0,L} ]\\
\label{step6}
&\le \kappa^L\left(1-c_{7}L^2 \kappa^L \right)^{\left[\frac{n}{L}\right]}
\bar E_0[N_{k',n}-X_{S_{k'}}\cdot u, R_{k'}<\infty \mid \mathfrak{F}_{0,L} ].
\end{eqnarray}
Now note that
\medskip
\begin{eqnarray}
\nonumber
& E_0[N_{k',n}-X_{S_{k'}}\cdot u, t^{(n)}_{k'}<\infty, B_{n,k'}, A_{n,k'}\mid \mathfrak{F}_{0,L} ]\\
\nonumber
&\le \sum_{j= 0}^\infty\sum_{j'\geq j+n}\sum_{y\in\mathbb Z^d}E_{\mathbb{P}\otimes Q }\ [E_{\omega,\varepsilon}[N_{k',n}-X_{S_{k'}}\cdot u,\\
\nonumber
&X_{t^{(0)}_{k'}}=y,t^{(0)}_{k'}=j ] P_{\theta_y \omega,\theta_{j} \varepsilon}[
D_{j,n}, t^{(n)}_{k'}=j' ]
\mathds{1}_{\{ (\varepsilon_{j'},\ldots, \varepsilon_{j'+L-1})=\overline{\varepsilon}^{(L)}\}}]\mid\mathfrak{F}_{0,L}]\\
\nonumber
&=\kappa^L Q[D_{0,n}]
\mathbb{E}[\bar E_{0,\omega}[N_{k',n}-X_{S_{k'}}\cdot u, t^{(0)}_{k'}<\infty]
\mid \mathfrak{F}_{0,L}].
\end{eqnarray}
Using Lemma \ref{qapp} to estimate $Q[D_{0,n}]$ we conclude the proof
of inequality (\ref{step6}).

\medskip

\noindent\emph{Step 7}.
Here we will show that there exist constant $c_{13}$ and $c_{14}$ such that

\begin{equation}
\label{number28}
 \sum_{n=0}^{L^2-1}\bar E_0[N_{k',n}-X_{S_{k'}}\cdot u, t^{(n)}_{k'}<\infty, A_{n,k'} \mid \mathfrak{F}_{0,L} ]
\le c_{13} \kappa^L L^4 b^{k'-1}
\end{equation}
and

\begin{equation}
\label{number29}
 \sum_{n=L^2}^\infty \bar E_0[N_{k',n}-X_{S_{k'}}\cdot u, t^{(n)}_{k'}<\infty, A_{n,k'},B_{n,k'} \mid \mathfrak{F}_{0,L} ]\\
\le 4 c_{14}  \kappa^{-L}b^{k'-1}.
\end{equation}
Let us first note that by an argument similar to the
one used to derive the bound in Step 1 (through Lemmas \ref{lemadinf}
and \ref{M}), we have that

\begin{equation}
\label{desig}
\bar E_0[N_{k',n}-X_{S_k'}\cdot u,  R_{k'}<\infty]\\
\le (n|u|_\infty+L'+c_{15})b^{k'-1},
\end{equation}
where $c_{15}:=\sqrt{c_6}$.
Let us now prove  (\ref{number28}). Indeed, note that by
Step 5 and (\ref{desig}) we then have that
\begin{eqnarray}
\nonumber
&\sum_{n=0}^{L^2-1}\bar E_0[N_{k',n}-X_{S_{k'}}\cdot u, t^{(n)}_{k'}<\infty, A_{n,k'} \mid \mathfrak{F}_{0,L} ]\\
\nonumber
&\le\kappa^L\sum_{n=0}^{L^2-1}  \bar E_0[N_{k',n}
-X_{S_{k'}}\cdot u, R_{k'}<\infty\mid\mathfrak{F}_{0,L} ]\\
\label{number30}
&\le c_{13}\ L^4\ \kappa^L b^{k'-1},
\end{eqnarray}
for some suitable constant  $c_{13}$. Let us
now prove (\ref{number29}). First note that

\begin{eqnarray}
\nonumber
&\sum_{n=L^2}^\infty \bar E_0[N_{k',n}-X_{S_{k'}}\cdot u, t^{(n)}_{k'}<\infty, A_{n,k'},B_{n,k'} \mid \mathfrak{F}_{0,L} ]\\
\nonumber
&\le
\sum_{n=L^2}^\infty \kappa^L
(1-c_{7}L^2\kappa^L)^{\left[\frac{n}{L}\right]} \bar E_0[N_{k',n}-X_{S_{k'}}\cdot u, R_{k'}<\infty \mid \mathfrak{F}_{0,L} ]\\
\nonumber
&\le
b^{k'-1}  \sum_{n=L^2}^\infty \kappa^L
(1-c_{7}L^2\kappa^L)^{\left[\frac{n}{L}\right]}
(n|u|_\infty+L'+c_{15})
\\
\label{dddd} &
\le c_{16} b^{k'-1}
\sum_{n = L^2}^\infty n\kappa^L (1-c_{33}L^2\kappa^L)^{[\frac{n}{L^2}]}.
\end{eqnarray}
for some constant $c_{16}$, where in the first inequality we have
used \emph{Step 6} and in the second we have used inequality (\ref{desig}).
Finally notice that using
the fact that for $n\ge L^2$ one has that
 $n \leq 2L^2\left[\frac{n}{L^2}\right]$, we get that

\begin{eqnarray*}
&\sum_{n = L^2}^\infty n\kappa^L(1-c_{7}L^2\kappa^L)^{\left[\frac{n}{L^2}\right]}\leq 2\kappa^L L^2 \sum_{n =   L^2}^\infty \left[\frac{n}{L^2}\right] (1-c_{7}L^2\kappa^L)^{\left[\frac{n}{L^2}\right]} \\
&=2L^4 \kappa^L \sum_{m=1}^\infty m(1-c_{7}L^2\kappa^L)^m \leq
 \frac{2}{(c_{7})^2}\kappa^{-L}.
\end{eqnarray*}
Using this estimate in (\ref{dddd}) we obtain (\ref{number29}).

\medskip

\noindent \emph{Step 8}. Here we finish the proof
of Proposition \ref{prop-sec5} combining the previous steps we
have already developed. Combining inequality (\ref{number25})
proved in  \emph{Step 4} with inequalities
(\ref{number28}) and (\ref{number29}) proved in \emph{Step 7}, we see that
there is a constant $c_{17}$ such that

\begin{equation}
\label{number33}
\bar E_0[X_{S_{k}}\cdot u, S_k<\infty \mid \mathfrak{F}_{0,L} ]\leq
c_{17} kb^{k-2}  \kappa^{-L}.
\end{equation}
Thus, by inequality (\ref{step3}) proved in \emph{Step 3}, we have that
\begin{equation}
\label{number34}
\sum_{n=0}^{L^2-1}\bar E_0[(X_{S_{k+1}}\cdot u)^2-(X_{S_{k}}\cdot u)^2,t^{(n)}_{k }<\infty,  A_{n,k}  \mid \mathfrak{F}_{0,L}]\leq
c_{18} L^4 k b^{k-2}.
\end{equation}
for certain positive constant $c_{18}$
On the other hand, combining  inequality (\ref{step3.1}) proved
in Step 3 with (\ref{number33}), we see that there exists
a constant $c_{19}$ such that
\begin{eqnarray}
\nonumber
&\sum_{n = L^2}^\infty\bar E_0[(X_{S_{k+1}}\cdot u)^2-(X_{S_{k}}\cdot u)^2,t^{(n)}_{k}<\infty, B_{n,k}, A_{n,k}  \mid \mathfrak{F}_{0,L}]\\
\nonumber
&\le c_{19}\sum_{n=L^2}^\infty
 \kappa^L (1-c_{7}L^2\kappa^L)^{\left[\frac{n}{L^2}\right]}
\left((n+L)^2 b^{k-1}\right.\\
\label{number35}
&\left. +(n+L)kb^{k-2}\kappa^{-L}\right).
\end{eqnarray}
Now, note that for some constant $c_{20}$ one has that

\begin{eqnarray}
\label{sum1}
&\sum_{n= L^2}^\infty (n+L)^2(1-c_{7}L^2\kappa^L)^{[\frac{n}{L^2}]}\leq c_{20}  \ \kappa^{-3L}\quad {\rm and}\\
\label{sum2}
&
\sum_{n= L^2}^\infty (n+L)(1-c_{7}L^2\kappa^L)^{[\frac{n}{L^2}]}\leq c_{20}  \ \kappa^{-2L}.
\end{eqnarray}
Substituting (\ref{sum1}) and (\ref{sum2}) into (\ref{number35})
we see that
\begin{eqnarray}
\nonumber
&\sum_{n= L^2}^\infty\bar E_0[(X_{S_{k+1}}\cdot u)^2-(X_{S_{k}}\cdot u)^2,t^{(n)}_{k}<\infty, B_{n,k}, A_{n,k}  \mid \mathfrak{F}_{0,L}]\leq\\
\label{number35.1}
&c_{21}  \kappa^{-2L}b^{k-2} k,
\end{eqnarray}
for some suitable positive constant $c_{21}$.
Substituting (\ref{number35}) and (\ref{number35.1}) into
inequality (\ref{number11}) of Step 2, we then conclude that
there is a constant $c_{22}$ such that
\begin{equation}
\label{number1111}
\bar E_0[(X_{S_{k+1}}\cdot u)^2-(X_{S_{k}}\cdot u)^2, S_{k+1}<\infty|\mathfrak F_{0,L}]\le  c_{22}\kappa^{-2L}b^{k-2} k.
\end{equation}
Substituting (\ref{number1111}) into (\ref{number7.1}) of Step 1, we get that

\begin{eqnarray}
\nonumber
&\bar E_0[(X_{S_{k'+1}}\cdot u)^2-(X_{S_{k'}}\cdot u)^2, S_k<\infty,
 D'\circ\theta_{S_k}=\infty\mid \mathfrak{F}_{0,L}]\\
\label{number7.1.1}
&\le b' b^{k+1}k'.
\end{eqnarray}
From the fact that
$\sum_{k=1}^\infty\sum_{k'=0}^{k-1}b^{k+1}k'<\infty$
together with (\ref{number7.1.1} and (\ref{decomXt}) of \emph{Step 0},
we conclude that

$$
\bar E_0[(X_{\tau}\cdot u)^2|\mathfrak F_{0,L}]\le c_{23} \kappa^{-2L},
$$
for some constant $c_{23}>0$,
which proves the proposition.

\medskip

 \section{Proof of Theorem \ref{mainth}}
\label{finals}

In this section we will prove  Theorem \ref{mainth} using
Proposition \ref{prop-sec5} proved in Section \ref{section5}.
First in Subsection \ref{subsection6.1}, we will define
an approximate sequence of regeneration times. In
Subsection \ref{subsection6.2}, we will show through
this approximate regeneration time sequence, that
there exists an approximate asymptotic direction.
In Subsection \ref{subsection6.3}, we will use
the approximate asymptotic direction to prove
Theorem \ref{mainth}.
\medskip

\subsection{Approximate regeneration time sequence}
\label{subsection6.1}
As in \cite{CZ01}, we define  approximate regeneration
by the recursively by $\tau_1^{(L)}:=\tau$ [c.f.(\ref{taudef})]
and for $i\ge 2$

$$
\tau_i^{(L)}:=\tau_1^{(L)}\circ\theta_{\tau_{i-1}^{(L)}}+\tau_{i-1}^{(L)}.
$$
We will drop the dependence in $L$ on $\tau_1^{(L)}$ when it is convenient for us, using the notation $\tau_i$ instead $\tau_i^{(L)}$.
Let us define  $\sigma$-algebras
corresponding to the information of the random walk
and the $\varepsilon$ process up to the  first
regeneration time and
of the environment $\omega$ at a distance of order $L$ to the left of
the position of the random walk at this regeneration time as

\begin{eqnarray}
\nonumber
&\mathcal{H}_1:=\sigma(\tau_{1}^{(L)},X_0,\varepsilon_0,\ldots,\varepsilon_{\tau_{1}^{(L)}-1},X_{\tau_{1}^{(L)}},\\
\nonumber
&\{\omega(y, \cdot): \ y\cdot u<u\cdot X_{\tau_1^{(L)}}-L|u|/|u|_1\}).
\end{eqnarray}
Similarly define for $k\ge 2$

\begin{eqnarray}
\nonumber
&\mathcal{H}_k:=\sigma(\tau_{1}^{(L)},\ldots,\tau_{k}^{(L)},X_0,\varepsilon_0,\ldots,\varepsilon_{\tau_{k}^{(L)}-1},X_{\tau_{k}^{(L)}},\\
\label{Hk}
&\{\omega(y, \cdot): \ y\cdot u<u\cdot X_{\tau_{k}^{(L)}}-L|u|/|u|_1\} ).
\end{eqnarray}
Let us now recall Lemma 2.3  of \cite{CZ01}, stated
here under the condition $P_0[D'=\infty]>0$  [c.f. (\ref{defdp})] instead of
Kalikow's condition.

\medskip

\begin{lemma}
\label{lemmaC-Z} Let $l\in\mathbb{S^*}^{d-1}$, $\alpha>0$ and
$\phi$ be such that $\lim_{r\to\infty}\phi(r)=0$.
Consider a random walk in a random environment
satisfying the cone-mixing assumption with respect to $\alpha$, $l$
and $\phi$ and uniformly elliptic
with respect to $l$.
Assume that $L$ is such that

$$
\phi(L)< P_0[D'=\infty].
$$
Then, $\mathbb{P}-$a.s. one has that

$$
\left|\bar P_0[\{X_{\tau_k +\cdot}-X_{\tau_k}\}\in A \mid \mathcal{H}_k]-\bar P_0[\{X_\cdot \}\in A|D'=\infty]\right| \leq \phi'(L),
$$
for all measurable sets $A\subset\mathbb (\mathbb Z^d)^{\mathbb N}$,
where

$$
\phi'(L):=\frac{2 \phi(L)}{(P_0[D'=\infty]-\phi(L))}.
$$

\end{lemma}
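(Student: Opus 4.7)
The plan is to exploit the fact that, by construction, $\tau_k$ is engineered so that the walk after $\tau_k$ never exits the cone $C(X_{\tau_k},l,\alpha)$. Hence the quenched law of $\{X_{\tau_k+n}-X_{\tau_k}\}_{n\ge 0}$ depends on the environment $\omega$ only through its restriction to this cone. Meanwhile $\mathcal H_k$ (as defined in (\ref{Hk})) encodes information about $\omega$ only on the half-space $\{y:y\cdot u<X_{\tau_k}\cdot u-L|u|_2/|u|_1\}$, which is separated from the cone by a distance of order $L$ along $l$. Cone-mixing $(CM)_{\alpha,\phi}|l$ then allows us to decouple these two sources of randomness up to an error $\phi(L)$, and the normalization by $P_0[D'=\infty]$ produces the factor $\phi'(L)=2\phi(L)/(P_0[D'=\infty]-\phi(L))$.

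More concretely, the first step is to establish a quenched disintegration formula. Applying the strong Markov property of the joint process $(\omega,\varepsilon,X)$ at time $\tau_k$, together with the defining property of $\tau_k$ that $D'\circ\theta_{\tau_k}=\infty$ and that the future $\varepsilon$-coordinates are independent of $\mathcal H_k$ under $Q$, I would derive
$$\bar P_{0,\omega}[\{X_{\tau_k+\cdot}-X_{\tau_k}\}\in A\mid\mathcal H_k]=\frac{P_{X_{\tau_k},\omega}[X_\cdot-X_0\in A,\,D'=\infty]}{P_{X_{\tau_k},\omega}[D'=\infty]}.$$
Because $\{D'=\infty\}$ forces the walk to remain in $C(X_{\tau_k},l,\alpha)$ forever, both numerator and denominator are measurable with respect to $\sigma(\omega(y,\cdot):y\in C(X_{\tau_k},l,\alpha))$.

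Next, conditioning on each value $X_{\tau_k}=x$ (which is $\mathcal H_k$-measurable), using stationarity of $\mathbb P$ and applying $(CM)_{\alpha,\phi}|l$ at range $L$, I would obtain
$$|\mathbb E[P_{X_{\tau_k},\omega}[D'=\infty]\mid\mathcal H_k]-P_0[D'=\infty]|\le\phi(L)$$
and the analogous bound where $\{D'=\infty\}$ is replaced by $\{X_\cdot-X_0\in A,D'=\infty\}$. Writing $\hat p,\hat q$ for these conditional expectations and $p,q$ for their unconditional counterparts (so $q/p=\bar P_0[X_\cdot\in A\mid D'=\infty]$), the elementary quotient estimate
$$\left|\frac{\hat q}{\hat p}-\frac{q}{p}\right|\le\frac{|\hat q-q|\,p+q\,|\hat p-p|}{\hat p\,p}\le\frac{2\phi(L)}{p-\phi(L)}$$
(valid since $q\le p$ and the hypothesis $\phi(L)<P_0[D'=\infty]$ ensures $\hat p>0$) delivers precisely $\phi'(L)$.

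The main obstacle is the careful justification of the disintegration formula in the first step. One must verify that, conditionally on $\mathcal H_k$ and on the full environment $\omega$, the law of $\{X_{\tau_k+n}-X_{\tau_k}\}_{n\ge 0}$ is exactly that of a walk started at $X_{\tau_k}$ under $P_{X_{\tau_k},\omega}$ and conditioned on $\{D'=\infty\}$. This requires unpacking the recursive definition of $\tau^{(L)}=S_K$ through the levels $\{S_j<\infty,R_j<\infty\}$, and exploiting the product structure $\bar P_0=\mathbb P\otimes Q\otimes P_{\omega,\varepsilon}$ so that the $\varepsilon$-coordinates occurring after $\tau_k$ remain fresh and independent of the information encoded in $\mathcal H_k$.
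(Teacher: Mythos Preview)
Your quenched disintegration formula is essentially correct: under $\bar P_{0,\omega}$, given the path--$\varepsilon$ history up to $\tau_k$, the post-$\tau_k$ increments have law $P_{X_{\tau_k},\omega}[\,\cdot\mid D'=\infty]$. The gap is in the passage from this quenched identity to the annealed conditional probability. Writing $G_x(\omega)=P_{x,\omega}[A,D'=\infty]$ and $H_x(\omega)=P_{x,\omega}[D'=\infty]$, your formula yields
\[
\bar P_0\bigl[\{X_{\tau_k+\cdot}-X_{\tau_k}\}\in A\,\big|\,\mathcal H_k\bigr]
=\bar E_0\!\left[\frac{G_{X_{\tau_k}}}{H_{X_{\tau_k}}}\,\Big|\,\mathcal H_k\right],
\]
the conditional expectation of a \emph{ratio}, not the ratio $\hat q/\hat p$ of conditional expectations. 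These two objects differ in general, and your elementary quotient estimate controls only the latter. Moreover, even if one could apply cone-mixing to the ratio $G/H$ directly, its unconditional mean is $\mathbb E[P_{0,\omega}[A\mid D'=\infty]]$, which is \emph{not} $\bar P_0[A\mid D'=\infty]=q/p$; so the argument would converge to the wrong limit.

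The paper (following \cite{CZ01}) circumvents this by working in the weak (test-function) formulation: for $\mathcal H_k$-measurable $h\ge 0$ one decomposes $\bar E_0[h\,\mathds 1_A\circ\theta_{\tau_k}]$ over the values $\{K=j,\,S_j=m,\,X_m=x\}$, applies the Markov property at the stopping time $S_j$, and obtains
\[
\bar E_0[h\,\mathds 1_A\circ\theta_{\tau_k}]=\sum_{j,m,x}\mathbb E[F_{j,m,x}\,G_x],\qquad
\bar E_0[h]=\sum_{j,m,x}\mathbb E[F_{j,m,x}\,H_x],
\]
with $F_{j,m,x}$ measurable in the half-space $\sigma$-algebra. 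Cone-mixing then bounds each \emph{covariance} $\mathbb E[F\,G_x]-\mathbb E[F]\,q$ by $\phi(L)\,\mathbb E[F]$, and taking $A$ equal to the full space normalises $\Sigma=\sum\mathbb E[F_{j,m,x}]\le \bar E_0[h]/(p-\phi(L))$, which is exactly where the denominator $P_0[D'=\infty]-\phi(L)$ in $\phi'(L)$ arises. Note also that $\tau_k$ is not a stopping time (it looks into the future via $D'\circ\theta_{S_j}=\infty$), so one cannot simply invoke the strong Markov property at $\tau_k$; the decomposition over $K=j$ and the genuine stopping times $S_j$ is what makes the argument work.
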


\medskip
\begin{proof}

For $k=1$, the argument given in page 890 of (\cite{CZ01}) still works without any change. With the purpose of showing that the result continues being
true under the \emph{weaker} assumptions here, we complete the induction argument in the case $k=2$.
To this end, we consider a positive $\mathcal{H}_2-$ measurable function $h$ of the form $h=h_1 \cdot (h_2)\circ\theta_{\tau_1}$ ($\cdot$ denotes usual function multiplication), such that $h_1,$ is $\mathcal{H}_1-$ measurable and $h_2$ is $\mathcal{H'}_1$ measurable, where the $\sigma-$ algebra $\mathcal{H'}_1$ is defined as :
\begin{eqnarray*}
\mathcal{H'}_1:&=&\sigma(\tau_{1}^{(L)},X_0,\varepsilon_0,\ldots,\varepsilon_{\tau_{1}^{(L)}-1},X_{\tau_{1}^{(L)}},\\
&&\{\omega(y, \cdot): \ u \cdot y \leq u\cdot X_{\tau_1^{(L)}}-L\frac{|u|}{|u|_1},\ y\in C(X_0,l,\alpha) \}).
\end{eqnarray*}
We let $A$ be a measurable set of the path space,
for short we will write $\mathds{1}_{A}:=\mathds{1}_{\{(X_n-X_0)_{n\geq 0}\in A\}}$. By the strong Markov property and using that $\tau_1<\infty$ within an event of full $P_0$ probability, we get:
\begin{eqnarray}
\nonumber
&\displaystyle{\bar E_{0}[h\mathds{1}_A \circ \theta_{\tau_ 2}]\leq \sum _{n\geq1}E_{0}[h\mathds{1}_A \circ \theta_{\tau_ 2} ]}\\
\nonumber
&\displaystyle{\bar E_{0}[h\mathds{1}_A \circ \theta_{\tau_ 2}\ \mathds{1}_{K=n}\circ \theta_{\tau_{1}}, \tau_1<\infty]}\\
\nonumber
&\displaystyle{\sum_{  t\geq 1 }\bar E_{0}[h\mathds{1}_A \circ \theta_{\tau_ 2} \ , \tau_1=t]}\\
\label{firstexp}
&\displaystyle{\sum_{  t\geq 1 }\bar E_{0}[h_1 \cdot (h_2\circ\theta_{\tau_1}) \mathds{1}_A \circ \theta_{\tau_ 2}\ , S_t<\infty, D'\circ \theta_{S_t}=\infty]}.
\end{eqnarray}
Now, notice that for given $t\in \mathbb{N}, m \in \mathbb{N}, x\in \mathbb{Z}^d$, we can find a random variable $h_{1,t, m, x }$
measurable with respect to $\sigma (\{\omega(y,\cdot): y\cdot u <x\cdot u-L\frac{|u|}{|u|_1}\}, \{X_i\}_{i<m})$ such that it coincides with $h_1$ on the event $\{\tau_1 =S_t=m, X_{S_t}=x\}$, therefore (\ref{firstexp}) equals
\begin{eqnarray}
\nonumber
&\displaystyle{\sum_{ \ t\geq 1 , m\geq 1, x\in \mathbb{Z}^d }\bar E_{0}[h_{1,t, m, x } (h_2\circ\theta_{\tau_1}) \mathds{1}_A \circ \theta_{\tau_ 2}\ \mathds{1}_{ S_t=m, D'\circ \theta_{m}=\infty, X_m=x}]}\\
\nonumber
&\displaystyle{\sum_{ \ t\geq 1 , m\geq 1, x\in \mathbb{Z}^d }\bar E_{0}[h_1\mathds{1}_{S_t=m, X_m=x D'\circ \theta_{m}=\infty}\mathds{1}_A \circ \theta_{\tau_ 2}h_2\circ\theta_{\tau_1}]}\\
\nonumber
&\displaystyle{\sum_{ \ t\geq 1 , m\geq 1, x\in \mathbb{Z}^d }E_{\mathbb{P} \otimes Q}[E_{0,\omega, \varepsilon}[h_{1,t, m, x } \mathds{1}_{S_t=m, X_m=x} \mathds{1}_{ D'\circ \theta_{m}=\infty} \mathds{1}_A \circ \theta_{\tau_ 2}h_2\circ\theta_{\tau_1}]]}\\
\nonumber
&\displaystyle{\sum_{ t\geq 1 , m\geq 1, x\in \mathbb{Z}^d }E_{\mathbb{P}\otimes Q}[E_{0,\omega, \varepsilon}[h_{1,t, m, x }\mathds{1}_{S_t=m, X_m=x}]E_{\theta_x\omega,\theta_m\varepsilon}[\mathds{1}_{D'=\infty} \mathds{1}_A \circ \theta_{\tau_ 1}h_2]]}.
\end{eqnarray}
We now work out the following expression
\begin{eqnarray}
\nonumber
&E_{\theta_x\omega,\theta_m\varepsilon}[\mathds{1}_{D'=\infty} \mathds{1}_A \circ \theta_{\tau_ 1}h_2]\\
\label{exprf}
&\displaystyle{\sum_{\substack{z\in C(x,l,\alpha)\\ n\geq1, j \geq m+ 1}}E_{\theta_x\omega,\theta_m\varepsilon}[\mathds{1}_{D'=\infty} \mathds{1}_A \circ \theta_{\tau_ 1}h_2, S_n=j, X_{S_n}=z, D'\circ\theta_j=\infty]}.
\end{eqnarray}
Observe that, as in the case of $h_1$, for fixed $x$ and $m$, we consider the probability measure $P_{\theta_x\omega,\theta_m\varepsilon}$. Then we can find
a measurable function $h_{2,j,n,z}$ with respect to $\sigma (\{\omega(y,\cdot):  y\cdot u  \leq z\cdot u-L\frac{|u|}{|u|_1}, \ y \in C(x,l,\alpha)\}, \{X_i\}_{i<j})$ , which coincides with $h_2$ on the event $\{\tau_1=S_n =j, X_{S_n}=z, D'=\infty\}$, furthermore note that $D'=\infty$ depends up to $(j-1)$ coordinate in $\varepsilon$ (recall that $\{D'=\infty\}\in \mathcal{H}_1$), hence we can apply the Markov property to get that the last expression in (\ref{exprf}) is equal to:
\begin{equation}
\label{expansion}
\sum_{\substack{z\in C(x,l,\alpha)\\ n\geq1, j \geq 1}}E_{\theta_x\omega,\theta_m\varepsilon}[h_{2,j,n,z},\mathds{1}_{ S_n =j, X_{S_n}=z, D'=\infty}]P_{\theta_z\omega, \theta_j\varepsilon}[A\cap\{D'=\infty\}].
\end{equation}
Using (\ref{expansion}), it follows that (\ref{firstexp}) is equal to:
\begin{eqnarray*}
\nonumber
&\displaystyle{\sum_{ t\geq 1 , m\geq 1, x \in \mathbb{Z}^d }\sum_{\substack{z\in C(x,l,\alpha)\\ n\geq1, j \geq m+ 1}}E_{\mathbb{P} \otimes Q}[E_{0,\omega, \varepsilon}[h_{1,t, m, x }\mathds{1}_{S_t=m, X_m=x}]\cdot}\\
\nonumber
&\displaystyle{E_{\theta_x\omega,\theta_m\varepsilon}[h_{2,j,n,z},\mathds{1}_{ S_n =j, X_{S_n}=z, D'=\infty}]P_{\theta_z\omega, \theta_j\varepsilon}[A\cap\{D'=\infty\}]]}
\end{eqnarray*}
Following \cite{CZ01}, we can write down the expression above as
\begin{eqnarray}
\nonumber
&\displaystyle{\sum_{ t\geq 1 , m\geq 1, x \in \mathbb{Z}^d }\sum_{\substack{z\in C(x,l,\alpha)\\ n\geq1, j \geq m+ 1}} E_{\mathbb{P} \otimes Q}[[E_{0,\omega, \varepsilon}[h_{1,t, m, x }\mathds{1}_{S_t=m, X_m=x}]]\cdot}\\
\nonumber
&\displaystyle{E_{\theta_x\omega,\theta_m\varepsilon}[h_{2,j,n,z}\mathds{1}_{ S_n =j, X_{S_n}=z, D'=\infty}]] \bar P_0[A\cap\{D'=\infty\}]+ \rho(A)},
\end{eqnarray}
where
$$
\rho(A):=\sum_{ t\geq 1 , m\geq 1, x \in \mathbb{Z}^d }\sum_{\substack{z\in C(x,l,\alpha)\\ n\geq1, j \geq m+ 1}}Cov_{\mathbb{P}\otimes Q}[f_{t,m,x,j,n,z},g_{j,z}],
$$
with:
$$
f_{t,m,x,j,n,z}:=E_{0,\omega, \varepsilon}[h_{1,t, m, x }\mathds{1}_{S_t=m, X_m=x}] E_{\theta_x\omega,\theta_m\varepsilon}[h_{2,j,n,z},\mathds{1}_{ S_n =j, X_{S_n}=z, D'=\infty}]
$$
and
$$
g_{j,z}:=P_{\theta_z\omega, \theta_j\varepsilon}[A\cap\{D'=\infty\}].
$$
On the other hand, since assumption $(CM)_{\phi,\alpha}|l$, the estimate
\begin{eqnarray}
\nonumber
&\displaystyle{\rho(A)\leq \phi(L) \sum_{ t\geq 1 , m\geq 1, x \in \mathbb{Z}^d }\sum_{\substack{z\in C(x,l,\alpha)\\ n\geq1, j \geq m+ 1}} E_{\mathbb{P} \otimes Q}[E_{0,\omega, \varepsilon}[h_{1,t, m, x }\mathds{1}_{S_t=m, X_m=x}]\cdot}\\
\nonumber
&\displaystyle{E_{\theta_x\omega,\theta_m\varepsilon}[h_{2,j,n,z}\mathds{1}_{ S_n =j, X_{S_n}=z, D'=\infty}]]}
\end{eqnarray}
holds for all measurable set $A$ in the path space, in particular applying this for $A=\mathbb{Z}^d$ turns out the estimate:
\begin{eqnarray}
\nonumber
&\displaystyle{\sum_{ t\geq 1 , m\geq 1, x \in \mathbb{Z}^d }\sum_{\substack{z\in C(x,l,\alpha)\\ n\geq1, j \geq m+ 1}} E_{\mathbb{P} \otimes Q}[E_{0,\omega, \varepsilon}[h_{1,t, m, x }\mathds{1}_{S_t=m, X_m=x}]\cdot}\\
\nonumber
&\displaystyle{E_{\theta_x\omega,\theta_m\varepsilon}[h_{2,j,n,z}\mathds{1}_{ S_n =j, X_{S_n}=z, D'=\infty}]]\leq}\\
\nonumber
&(P_0[D'=\infty]-\phi(L))^{-1}\bar E_0[h].
\end{eqnarray}
From now on, we can follow the same sort of argument as in (\cite{CZ01}), in order to conclude that
\begin{equation*}
\parallel \bar P_0[\{X_{\tau_2 +n}-X_{\tau_2}\}\in\cdot \mid \mathcal{H}_2]-\bar P_0[\{X_n\}\in\cdot\mid D'=\infty]\parallel_{var} \leq \phi'(l).
\end{equation*}
Therefore the second step induction is complete.
\end{proof}

\medskip

\subsection{Approximate asymptotic direction}
\label{subsection6.2}
We will show that a random satisfying the
cone mixing, uniform ellipticity assumption and the
non-effective polynomial condition with high enough
degree has an approximate asymptotic direction.
The exact statement is given below. It will also
be shown that the right order in which the random
variable $X_{\tau_1}$ grows as a function of $L$ is
$\kappa^{-L}$.

\begin{proposition}
\label{mp}
 Let $l\in\mathbb{S^*}^{d-1}$, $\phi$
be such that $\lim_{r\to\infty}\phi(r)=0$, $c>0$, $M>6d$
and $0<\alpha<\min\{\frac{1}{9},\frac{1}{2c+1}\}$.
Consider a random walk in a random environment
satisfying the cone mixing condition with respect to
$\alpha$, $l$ and $\phi$  and
the uniform ellipticity condition with respect to $l$.
Assume that $(PC)_{M,c}|l$ is satisfied.
Then, there exists a sequence $\eta_L$ such that
$\lim_{L\to\infty}\eta_L=0$ and $\bar P_0$-a.s.

\begin{equation}
\label{e1p62}
\limsup_{n\rightarrow\infty}\left|\frac{\kappa^L X_{\tau_n}}{n}-\lambda_L \right|<\eta_L,
\end{equation}
where for all $L\ge 1$,

\begin{equation}
\label{lambda}
\lambda_L: =\bar E_0[\kappa^L X_{\tau_1} \mid D'=\infty].
\end{equation}
Furthermore,

\begin{equation}
\label{lambda2}
|\lambda_L|_2\ge c_{270}\kappa^{-L},
\end{equation}
for some constant $c_{270}$.
\end{proposition}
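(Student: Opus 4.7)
The plan is to decompose $X_{\tau_n}=\sum_{k=1}^n D_k$ with $D_k:=X_{\tau_k}-X_{\tau_{k-1}}$ (and $\tau_0:=0$) and apply an $L^2$-martingale strong law of large numbers to the rescaled increments $\kappa^L D_k$. Two earlier results provide the needed structure. Lemma \ref{lemmaC-Z} gives approximate independence: for $k\geq 2$, the conditional law of $D_k$ given $\mathcal H_{k-1}$ lies within $\phi'(L)$ in total variation of the law of $X_{\tau_1}$ under $\bar P_0[\,\cdot\mid D'=\infty]$, which by definition has mean $\kappa^{-L}\lambda_L$. Proposition \ref{prop-sec5} supplies uniform moment control: since $\mathfrak F_{X_{\tau_{k-1}},L}\subset \mathcal H_{k-1}$, we have $\bar E_0[(\kappa^L D_k\cdot u)^2 \mid \mathcal H_{k-1}]\leq c_5$ for every $k\geq 1$, and an analogous bound on the transverse components is obtained from the fact that after $\tau_{k-1}$ the walk is constrained to a cone around $l$.

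Combining these two inputs via the elementary inequality $|\bar E_\mu f - \bar E_\nu f|\leq 2\,\|\mu-\nu\|_{\mathrm{TV}}^{1/2}\,(\bar E_\mu |f|^2 + \bar E_\nu |f|^2)^{1/2}$ yields $\bigl|\bar E_0[\kappa^L D_k\mid \mathcal H_{k-1}] - \lambda_L\bigr|\leq \delta_L$ with $\delta_L=O(\sqrt{\phi'(L)})\to 0$ as $L\to\infty$. The martingale differences $M_k:=\kappa^L D_k-\bar E_0[\kappa^L D_k\mid\mathcal H_{k-1}]$ then have uniformly bounded conditional second moments, so Chebyshev's inequality along the subsequence $n_j:=j^2$ together with standard monotonicity interpolation gives $n^{-1}\sum_{k=1}^n M_k\to 0$ $\bar P_0$-a.s. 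Combined with the conditional-mean estimate, this produces (\ref{e1p62}) with $\eta_L:=\delta_L$.

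For the lower bound on $|\lambda_L|_2$, the key observation is that $\tau_1$ is engineered to occur immediately after $L$ deterministic steps from $\overline\varepsilon^{(L)}$, contributing a displacement $(L/|u|_1)u$ in the $l$-direction, and that each attempt time $S_k$ occurs at a strict new record of $X\cdot l$. Each attempt independently yields a true regeneration with conditional probability at least $P_0[D'=\infty]-\phi(L)\geq c_2(d)/2$ for $L$ large (by Proposition \ref{D'} and Lemma \ref{lemadinf}), while the chance of any given attempt being triggered is of order $\kappa^L$ by the uniform-ellipticity cost of executing $\overline\varepsilon^{(L)}$. A geometric counting argument then gives $\bar E_0[X_{\tau_1}\cdot l \mid D'=\infty]\geq c'\kappa^{-L}$, which yields the claimed lower bound on $|\lambda_L|_2$ after the $\kappa^L$ rescaling in the definition of $\lambda_L$. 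The main obstacle will be making the conditional-mean estimate $\delta_L$ uniform in $k$ so that a single asymptotic error $\eta_L\to 0$ emerges from the martingale SLLN; the uniformity of Proposition \ref{prop-sec5} across the $\sigma$-algebras $\mathfrak F_{x,L}$ is exactly what makes this uniformity possible.
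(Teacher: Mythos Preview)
Your approach is correct in spirit and reaches the same conclusion with $\eta_L=O(\sqrt{\phi'(L)})$, but it follows a genuinely different route from the paper. The paper does \emph{not} center by conditional expectations and run a martingale SLLN directly; instead it invokes the coupling of Lemma~3.2 in \cite{CZ01} to write $\overline X_i:=\kappa^L(X_{\tau_i}-X_{\tau_{i-1}})$ as $(1-\Delta_i)\widetilde X_i+\Delta_i Z_i$, with $(\widetilde X_i,\Delta_i)_{i\ge 2}$ i.i.d., $\widetilde X_2\overset{d}{=}\overline X_1$ under $\bar P_0[\,\cdot\mid D'=\infty]$, and $P[\Delta_2=1]=\phi'(L)$. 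The i.i.d.\ part $\frac1n\sum\widetilde X_i$ is handled by the ordinary SLLN, the cross term $\frac1n\sum\Delta_i\widetilde X_i$ by Cauchy--Schwarz, and only the residual $\frac1n\sum\Delta_i Z_i$ requires a martingale argument (Burkholder--Gundy plus Kronecker). Your route trades the coupling lemma for the elementary total-variation/second-moment inequality, which is arguably cleaner; the paper's route makes the i.i.d.\ structure explicit and isolates the $\phi'(L)$-error in the Bernoulli indicator~$\Delta_i$. For the lower bound on $|\lambda_L|_2$ both arguments are essentially the same geometric count, and note that what is actually proved (in the paper as well) is $\bar E_0[X_{\tau_1}\cdot l]\ge c\,\kappa^{-L}$, i.e.\ $|\lambda_L|_2\ge c$; the exponent in the displayed statement is a slip.

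One point in your write-up deserves tightening: the claim $\bar E_0[(\kappa^L D_k\cdot u)^2\mid\mathcal H_{k-1}]\le c_5$ does \emph{not} follow merely from the inclusion $\mathfrak F_{X_{\tau_{k-1}},L}\subset\mathcal H_{k-1}$, since conditioning on a finer $\sigma$-algebra can increase a conditional expectation. What makes it work is that, on $\{X_{\tau_{k-1}}=x\}$, the extra information in $\mathcal H_{k-1}$ beyond $\mathfrak F_{x,L}$ (the past trajectory and past $\varepsilon$'s) is conditionally independent of $D_k$, so the conditional law of $D_k$ given $\mathcal H_{k-1}$ equals that of $X_{\tau_1}$ under $\bar P_x[\,\cdot\mid D'=\infty,\mathfrak F_{x,L}]$; then Proposition~\ref{prop-sec5} and Lemma~\ref{lemadinf} give the uniform bound $c_5/(P_0[D'=\infty]-\phi(L))$. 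This is exactly the structure exploited in the proof of Lemma~\ref{lemmaC-Z}, and the paper uses the same fact implicitly when bounding $E[|Z_i|_2^2\mid\mathcal G_i]$.
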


\medskip

\noindent We first prove inequality (\ref{e1p62}) of Proposition \ref{mp}.
We  will follow
the argument presented for the proof of Lemma 3.3 of
 \cite{CZ01}.
For each integer $i\geq 1$ define the sequence

$$
\overline{X}_i:=\kappa^L (X_{\tau_i}-X_{\tau_{i-1}}),
$$
with the convention $\tau_0=0$.
Using  Lemma \ref{lemmaC-Z} and
Lemma 3.2 of \cite{CZ01},
 we can enlarge the probability space where
 the sequence $\{X_i:i\ge 1\}$ so that
there we have the following properties:

\begin{itemize}
  \item[(1)] There exist an i.i.d. sequence $\{(\widetilde{X}_i,  \Delta_i):i\ge 2\}$ of random vectors
 with values in $(\kappa^{L}\mathbb{Z}^d,  \{0,1\})$, such that
 $\widetilde{X}_2$ has the same distribution as
$\overline{X}_1$ under the measure $\bar P_0[\cdot | D'=\infty]$
while $\Delta_2$ has a Bernoulli distribution on $\{0,1\}$ with $\bar P_0[\Delta_i=1]=\phi'(L)$.

\item[(2)] There exists a sequence $\{Z_i:i\ge 2\}$ of random
variables such that for all $i\ge 2$ one has that

\begin{equation}
\label{oxi}
  \overline{X}_i=(1-\Delta_i)\widetilde{X}_i+\Delta_i Z_i.
 \end{equation}
Furthermore, for each $i\ge 2$,   $\Delta_i$ is independent of
$Z_i$ and of

 $$
\mathcal{G}_i:=\sigma \{ \overline{X}_j:j \leq i-1\}.
$$
\end{itemize}

\medskip

\noindent We will
call $P$ the common probability distribution
of the sequences $\{\overline X_i:i\ge 2\}$, $\{\widetilde X_i:i\ge 2\}$,
$\{Z_i:i\ge 2\}$ and $\{\Delta_i:i\ge 2\}$, and $E$ the
corresponding expectation.
From (\ref{oxi}) note that

\begin{equation}
\label{sumx}
\frac{1}{n}\sum_{i=1}^n\overline{X}_{i}
=\frac{\overline{X}_1}{n}+\frac{1}{n}\sum_{i=2}^n \widetilde{X}_{i}-\frac{1}{n}
\sum_{i=2}^n \Delta_i\widetilde{X}_{i}
+\frac{1}{n}\sum_{i=1}^n \Delta_i Z_i.
\end{equation}
Let us now examine the behavior as $n\to\infty$ of each
of the four terms in the left-hand side of (\ref{sumx}). Clearly, the first term tends
to $0$ as $n\to \infty$.
For the second term, note that on the event $\{D'=\infty\}$, one has that
$\mid\overline{X}_1\mid_2 ^2\le c_{24}(\overline{X}_1\cdot l)^2$ for some constant
$c_{24}$. Therefore, by Proposition \ref{prop-sec5},
and the fact that $\widetilde {X}_2$ has the same distribution as
$\overline{X}_1$ under $\bar P_0[\cdot |D'=\infty]$, we see that

\begin{equation}
\label{ex1d}
E[|\widetilde{X}_2|_2 ^2]=\bar E_0[|\overline{X}_1|_2 ^2|D'=\infty]\le
c_{24}\bar E_0[(\overline{X}_1\cdot l)^2|D'=\infty]
<c_{25},
\end{equation}
for a suitable constant $c_{25}$.
Hence, by the strong law of large numbers, we actually have that $P$-a.s.

\begin{equation}
\label{ndelta4}
\lim_{n\rightarrow\infty}\frac{1}{n}
\sum_{i=2}^n \widetilde{X}_{i}=\lambda_L.
\end{equation}
For the third term in the left-hand side of (\ref{sumx}) we
have by Cauchy-Schwartz inequality that

\begin{equation}
\label{ndelta}
\left|\frac{1}{n}\sum_{i=2}^n  \Delta_i \widetilde{X}_{i}\right|_2\leq
 \left(\frac{1}{n} \sum_{i=2}^{n} | \widetilde{X}_{i}|^{2}\right)^{\frac{1}{2}} \left(\frac{1}{n}\sum_{i=2}^n \Delta_i\right)^{\frac{1}{2}}.
\end{equation}
Again by (\ref{ex1d}) and Proposition \ref{prop-sec5}, we know
that there is a constant $c_{26}$ [c.f. (\ref{eq-prop5})]
such that $P$-a.s.

$$
\lim_{n\to\infty}\frac{1}{n}\sum_{i=2}^n |\widetilde{X}_i|_2^2=\bar E_0[|\overline{X}_1|_2 ^2|
D'=\infty]\le c_{26}.
$$
As a result, from (\ref{ndelta}) we see that

\begin{equation}
\label{ndelta2}
\limsup_{n\rightarrow\infty}\left|\frac{1}{n}
\sum_{i=2}^n \Delta_i \widetilde{X}_{i}\right|_2\leq \sqrt{c_{26}
\phi'(L)}.
\end{equation}
For the fourth term of the left-hand side of
(\ref{sumx}), we note
setting $\overline{Z}_i^{(L)}:= E[Z_i\mid \mathcal{G}_i]$
that

$$
M_n^j:=\sum_{i=2}^n \frac{\Delta_i(Z_i-\overline{Z}_i)\cdot e_j}{i}\quad {\rm for}\ n\ge 2, \ j\in\{1,2,\ldots, n\}
$$
is a  martingale with mean zero with respect to the filtration $\{\mathcal{G}_i:i\geq 1\}$.
 Thus, from the Burkholder-Gundy inequality \cite{W91}, we know
that there is a constant $c_{27}$ such that for all $j\in\{1,2,\ldots, d\}$

\begin{equation}
\label{bgm}
E\left[\left(\sup_n M_n^j\right)^{2}\right]\leq c_{27} E \left[
\sum_{i=2}^{\infty}\frac{|\Delta_i(Z_i-\overline{Z}_i)|_2 ^2}{i^2}
\right].
\end{equation}
Now, since (\ref{oxi}), note that for all $i\ge 2$,
$|\Delta_i Z_i|\le |\bar X_i|$. It follows that
there exists a constant $c_{28}$ such that

\begin{equation}
\label{zuno2}
E[|Z_i|_2 ^2|\mathcal G_i]\le \frac{1}{\phi'(L)} E_0[|\overline{X}_1|_2 ^2, D'=\infty|\mathfrak F_{0,L}]\le \frac{1}{\phi'(L)} c_{28},
\end{equation}
where we have used Proposition \ref{prop-sec5} and Lemma \ref{lemadinf}
in the second inequality. So that by (\ref{bgm}) we see that the martingale
 $\{M_n^j:n\ge 1\}$ converges $P-$a.s. to a random variable for any $j\in\{1,2,\ldots,d\}$.
Thus, by Kronecker's lemma applied to each component $j\in\{1,2,\ldots,d\}$, we conclude that $P$-a.s.

\begin{equation}
\label{zkron}
\lim_{n\rightarrow\infty}\frac{1}{n}\sum_{i=2}^n\Delta_i(Z_i-\overline{Z}_i)=0.
\end{equation}
Now, note from (\ref{zuno2}) that there is a constant $c_{29}$ such that

\begin{equation}
\label{zbari}
|\overline{Z}_i|_2\le E[|Z_i|_2 ^2 \mid \mathcal{G}_i]^{\frac{1}{2}}\leq c_{29}\phi'(L)^{ -\frac{1}{2}}.
\end{equation}
Therefore, $P$-a.s. we have that

\begin{eqnarray}
\nonumber
&\limsup_{n\to\infty}\left|\frac{1}{n}\sum_{i=2}^n \Delta_i\overline{Z}_i
 \right|_2\leq c_{29}\phi'(L)^{ -\frac{1}{2}}\limsup_{n\to\infty} \frac{1}{n}\sum_{i=1}^n \Delta_i\\
\label{ndelta3}
&\le c_{29} \phi'(L)^{\frac{1}{2}}.
\end{eqnarray}
Substituting (\ref{ndelta3}), (\ref{ndelta2}) and (\ref{ndelta4}) into
(\ref{sumx}), we conclude the proof of inequality (\ref{e1p62})
provided we set $\eta_{L}=c_{30}\phi'(L)^{\frac{1}{2}}$
for some constant $c_{30}$.

Let us now prove the inequality  (\ref{lambda2}). By an argument
similar to the one presented in \cite{CZ01} to show that the random
variable $\tau_1$ has a lower bound of order $\kappa^{-L}$, we
can show that $X_{\tau_1}\cdot l$ is bounded from below by the sum
$S:=\sum_{i=1}^N U_i$, where $\{U_i:i\ge 1\}$ are i.i.d. random
variables taking values on $\{1,2,\ldots, L\}$ with
law $P[U_i=n]=\kappa^n$ for $1\le n\le L$, while $N:=\min\{ i\ge 1: U_i=L\}$.
It is clear then that

$$
 E[X_{\tau_1}\cdot l] \ge E[N]=c_{31}\kappa^{-L},
$$
for some constant $c_{31}$.
\medskip

\subsection{Proof of  Theorem \ref{mainth}}
\label{subsection6.3}
It will be enough to prove that there is a constant
$c_{32}$ such that for all $L\ge 1$ one has that
\begin{equation}
\label{ineq}
\limsup_{n\rightarrow\infty}\left|\frac{X_{n}}{|X_{n}|_2}-\frac{\lambda_L}{|\lambda_L|_2}\right|_2< c_{260} \frac{\eta_L}{\lambda_L}.
\end{equation}
Indeed, by compactness, we know that we can choose
a sequence $\{L_m, m\ge 1\}$ such that

\begin{equation}
\label{ineqq}
\lim_{m\to\infty}\frac{\lambda_{L_m}}{|\lambda_{L_m}|_2}=\hat v,
\end{equation}
exists. On the other hand, by the inequality  (\ref{lambda2})
of Proposition \ref{prop-sec5}, we know that
$\lim_{m\to\infty}\frac{\eta_{L_m}}{\lambda_{L_m}}=0$.
Now note that by the triangle inequality and (\ref{ineq}), for every $m\ge 1$ one has that

\begin{equation}
\label{ineqqq}
\limsup_{n\rightarrow\infty}\left|\frac{X_{n}}{|X_{n}|_2}-\hat v\right|_2\le
c_{32} \frac{\eta_{L_m}}{\lambda_{L_m}}+
\left|\frac{\lambda_{L_m}}{|\lambda_{L_m}|_2}-\hat v\right|_2.
\end{equation}
Taking the limit $m\to\infty$ in (\ref{ineqqq}) using (\ref{ineqq})
we prove Theorem \ref{mainth}.

Let us hence prove inequality (\ref{ineq}).
Choose  a nondecreasing sequence $\{k_n:n\ge 1\}$, $P$- a.s. tending to $+\infty$  so that
for all $n\ge 1$ one has that
$$
\tau_{k_n} \leq n < \tau_{k_n +1}.
$$
Notice that
\begin{equation}
\label{decom}
\frac{X_{n}}{|X_{n}|_2}=\left(\frac{X_{n}-X_{\tau_{k_{n}}}}{|X_{n}|_2}\right)+\left(\frac{X_{\tau_{k_{n}}}}{k_n}\frac{k_n}{|X_{n}|_2}\right).
\end{equation}
On the other hand, we assume for the time being,
that for  large enough $L$ we have proved that
\begin{equation}
\label{ineq1}
\limsup_{n\rightarrow\infty}\frac{|X_{n}-X_{\tau_{k_{n}}}|_2}{k_{n}} =0.
\end{equation}
Note first that (\ref{ineq1}) implies that

\begin{equation}
\label{ineq12}
\limsup_{n\rightarrow\infty}\frac{|X_{n}-X_{\tau_{k_n}}|_2}{|X_{n}|_2}=0.
\end{equation}
Indeed, note that
$|X_n|_2\ge X_n \cdot l \geq X_{\tau_{k_n}}\cdot l\geq k_n L
\frac{|l|_2} {|l|_1}$, which in combination with (\ref{ineq1})
implies (\ref{ineq12}).
Also,  from (\ref{ineq1}) and the fact that

\begin{equation}
\label{ineq2}
\frac{|X_{\tau_{k_{n}}}|_2}{k_n}-\frac{|X_{n}-X_{\tau_{k_{n}}}|_2}{k_{n}}\leq\frac{|X_n|_2}{k_{n}}\leq
\frac{|X_{\tau_{k_{n}}}|_2}{k_n}+\frac{|X_{n}-X_{\tau_{k_{n}}}|_2}{k_{n}},
\end{equation}
we see that

\begin{equation}
\label{ineq4}
\limsup_{n\rightarrow\infty}\left|\frac{\kappa^L|X_n|_2}{k_n}-|\lambda_L|_2\right|_2\leq \eta_L.
\end{equation}

Combining (\ref{ineq12}) and (\ref{ineq4}) with (\ref{decom}) we get (\ref{ineq}).
Thus, it is enough to prove the claim in (\ref{ineq1}). To this end,
note that

\begin{eqnarray}
\nonumber
&\displaystyle{\frac{|X_{n}-X_{\tau_{k_{n}}}|_2}{k_{n}} \leq \sup_{j \geq 0}\frac{|X_{(\tau_{k_n}+j)\wedge \tau_{k_n +1}}-X_{\tau_{k_{n}}}|_2}{k_{n}}}\\
\label{final1}
\end{eqnarray}
We now consider the sequence $\widehat{X}_{k\geq 1}:=\left(\kappa^L\ \sup_{j\geq0}|X_{(\tau_{k}+j)\wedge(\tau_{k+1})}-X_{\tau_k}|\right)_{k\geq 1}$, a coupling decomposition as in the proof of Proposition \ref{mp} turns out; in a enlarged probability space $\mathrm{P}$ if necessary, the existence of two i.i.d. sequences $\left(\mathrm{X}_k\right)_{k\geq1}, \ \left(\mathrm{\Delta}_k\right)_{k\geq 1}$ and a sequence $\left(Y_k\right)_{k\geq 1}$, such that $\mathrm{P}$ supports the following:
\begin{itemize}
\item For $k\geq1$, the common law of $\mathrm{X}_k$ is the same as $\widehat{X}_1$ under $\bar P[\cdot\mid D'=\infty]$, and one has that $\mathrm{\Delta}_k$ is Bernoulli with values in the set $\{0,1\}$ independent of $\mathcal{G}_k$ and $\mathrm{P}[\mathrm{\Delta}_k=1]=\phi'(L)$.
\item $\mathrm{P}$- almost surely for $k\geq 1$, we have the decomposition:
$$
\widehat{X}_k=(1-\mathrm{\Delta}_k)\mathrm{X}_k+\mathrm{\Delta}_k Y_k
$$
\end{itemize}
Furthermore, quite similar arguments as the ones given in the proof of Proposition \ref{mp} allow us to conclude that:

\begin{eqnarray}
\nonumber
&\displaystyle{\sum_{j=1}^n \frac{|\mathrm{X}_j|}{n}\rightarrow \mathrm{E}[|\widehat{X_1}|\mid D'=\infty]<\infty},\\
\nonumber
&\displaystyle{\sum_{j=1}^n\frac{\mathrm{\Delta}_j (Y_j-\widetilde{Y}_j)}{n}\rightarrow0}\mbox{  and}\\
\label{fs}
&\displaystyle{\sum_{j=1}^n\frac{\mathrm{|\Delta}_j \widetilde{Y}_j|}{n}\leq c_{240}\phi'(L)^{\frac{1}{2}}}.\\
\end{eqnarray}
where $\widetilde{Y}_j:=\mathrm{E}[Y_j\mid\mathcal{G}_j]$. Therefore, using the following inequality
\begin{equation}
\frac{\widehat{X}_k}{k}=\frac{\mathrm{X}_k}{k}+\frac{\mathrm{\Delta}_k (Y_k-\widetilde{Y}_k)}{k}+\frac{\mathrm{\Delta}_k \widetilde{Y}_k}{k},
\end{equation}
implies that
\begin{equation}
\frac{\mathrm{X}_k}{k}\rightarrow_{\substack{k \to \infty}}0
\end{equation}
The proof is finished.

\appendix
\section{Proof of Lemma \ref{qapp}}
\label{appendix}

Here we will prove Lemma \ref{qapp}.
Let us first remark that it will be enough to
show that there exists a constant $c_{33}>0$ such that for all
$L\in |u|_1\mathbb N$

\begin{equation}
\label{Qn}
Q[D_{0,L^2}]\leq 1-c_{33}L^2\kappa^L.
\end{equation}
Indeed, using this inequality and the product structure of $Q$,
for all $n\ge L^2$ one has that

$$
Q[D_{0,n}]\le
(1-c_{7}L^2\kappa^L)^{\left[\frac{n}{L^2}\right]}.
$$
In order to prove (\ref{Qn}), for
 $j=L^2-L$ and $i=0,1, \ldots, j $ consider the events

$$
A_i=\{\varepsilon: \ (\varepsilon_{i},\ldots, \varepsilon_{i+L-1})=\overline{\varepsilon}^{(L)} \}.
$$
Then, by the inclusion-exclusion principle we have that

\begin{equation}
\label{q}
Q[(D_{0,L^2})^c]\ge \sum_{0\leq j_1\leq j}Q[A_{j_1}]
 -\sum_{0\leq j_1<j_2\leq j }Q[A_{j_1}\cap A_{j_2}].
\end{equation}
Now, note that

\begin{eqnarray}
\nonumber
& \displaystyle{\sum_{0\leq j_1<j_2\leq j }}Q[A_{j_1}\cap A_{j_2}]\leq j\kappa^{L+1}+(j-1)\kappa^{L+2}+\ldots\\
\nonumber
&\ldots +(j-L+1)\kappa^{2L}+(j-L)\kappa^{2L}+\ldots +(j-(j-1))\kappa^{2L}\\
\nonumber
&\leq j \kappa^L  \displaystyle{\sum_{n=1}^L }\kappa^n +\kappa^{2L}(j-L)^2
\leq L^2 \kappa^L  \frac{1-\kappa^{L+1}}{1-\kappa}+L^4\kappa^{2L}\\
\label{qq}
&\leq c_{34}L^2\kappa^L,
\end{eqnarray}
for some constant $c_{34}$. Since $Q[A_{i}]=\kappa^L$ for all $1\le i\le j$,
we conclude from (\ref{q}) and (\ref{qq}) that there is a constant $c_{33}$
such that

\begin{equation*}
Q[D_{0,L^2}]=1-Q[(D_{0,L^2})^c] \leq 1 - c_{33}L^2\kappa^L.
\end{equation*}
This finishes the proof.

\medskip

\section*{Acknowledgement}
The first author wishes to thank Alexander Drewitz for useful conversations
about equivalent formulations of condition $(T')$.

\end{document}